\documentclass[reqno,12pt]{amsart}
\usepackage{amsfonts}
\usepackage{bbm}
\usepackage{}
\numberwithin{equation}{section}
\usepackage{hyperref}
\usepackage{color}
\usepackage{caption}
\usepackage{rotating}
\usepackage{indentfirst}
\usepackage{color}
\usepackage{amssymb}
\usepackage{mathrsfs}
\usepackage[color,matrix,arrow,pdf]{xy}
\usepackage{rotating}
\usepackage{tikz, physics}

\xyoption{all}
\def\Ext{\mbox{\rm Ext}} \def\Tor{\mbox{\rm Tor}}\def\Hom{\mbox{\rm Hom}}  
    \def\Mod{\mbox{\rm Mod}}
\def\Ker{\mbox{\rm Ker}}  \def\Coker{\mbox{\rm Coker}}

 \def\Id{\mbox{\rm Id}}
\def\Im{\mbox{\rm Im}} \def\id{\mbox{\rm id}}  
 \def\pd{\mbox{\rm pd}} 
\def\sup{\mbox{\rm sup}}

\def\Ch{\mbox{\rm Ch}}

\renewcommand{\baselinestretch}{1.2}

\theoremstyle{plain}
\newtheorem{theorem}{\bf Theorem}[section]
\newtheorem{lemma}[theorem]{\bf Lemma}
\newtheorem{corollary}[theorem]{\bf Corollary}
\newtheorem{proposition}[theorem]{\bf Proposition}

\theoremstyle{definition}
\newtheorem{definition}[theorem]{\bf Definition}
\newtheorem{remark}[theorem]{\bf Remark}
\newtheorem{example}[theorem]{\bf Example}

\newcommand{\bt}{\begin{theorem}}
\newcommand{\et}{\end{theorem}}
\newcommand{\bl}{\begin{lemma}}
\newcommand{\el}{\end{lemma}}
\newcommand{\bd}{\begin{definition}}
\newcommand{\ed}{\end{definition}}
\newcommand{\bc}{\begin{corollary}}
\newcommand{\ec}{\end{corollary}}
\newcommand{\bp}{\begin{proof}}
\newcommand{\ep}{\end{proof}}
\newcommand{\bx}{\begin{example}}
\newcommand{\ex}{\end{example}}
\newcommand{\br}{\begin{remark}}
\newcommand{\er}{\end{remark}}
\newcommand{\be}{\begin{equation}}
\newcommand{\ee}{\end{equation}}
\newcommand{\ba}{\begin{align}}
\newcommand{\ea}{\end{align}}
\newcommand{\bn}{\begin{enumerate}}
\newcommand{\en}{\end{enumerate}}
\newcommand{\bcs}{\begin{cases}}
\newcommand{\ecs}{\end{cases}}
\newcommand{\CE}{\mathrm{C}\text{-}\mathrm{E}}
\newcommand{\GProj}{\mathrm{GProj}}
\newcommand{\GInj}{\mathrm{GInj}}
\newcommand{\RNum}[1]{\uppercase\expandafter{\romannumeral #1\relax}}

\makeatletter
\renewcommand{\section}{\@startsection{section}{1}{0mm}
  {-\baselineskip}{0.5\baselineskip}{\bf\leftline}}
\makeatother

\begin{document}
\begin{center}
{\large \bf Balanced pairs of Cartan-Eilenberg complexes over virtually Gorenstein rings}
\footnote{ The first author was supported by grant 22004/PI/22 funded by Fundaci\'on S\'eneca, Agencia de Ciencia y Tecnolog\'ia de la Regi\'on de Murcia. The first and second authors were supported by grant PID2024-155576NB-I00 funded by MICIU/AEI/10.13039/501100011033 /FEDER, UE. The third author was supported by China Scholarship Council (Grant No. 202506190133). }
\footnote{{\it 2020 Mathematics Subject Classification:} 16E65, 18G05, 18G10, 18G25.}
\footnote{{\it Keywords:} Cartan-Eilenberg complexes, balanced pairs, exact categories, virtually Gorenstein rings, Tate (co)homology.}

\vspace{0.1cm}
Sergio Estrada$^{1}$, Alina Iacob$^{2}$ and Yucheng Wang$^{3,*}$ \footnote{$^*$ Corresponding author.}\\
{\renewcommand{\baselinestretch}{1}\fontsize{9}{9.5}\selectfont\itshape
$^{1}$Department of Mathematics, University of Murcia, Campus de Espinardo, 30100 Murcia, Spain\\
$^{2}$Department of Mathematical Sciences, Georgia Southern University, Statesboro, GA 30460, USA\\
$^{3}$School of Mathematics, Nanjing University, Nanjing 210093, Jiangsu Province, P.R. China\\
E-mail: $^{1}$sestrada@um.es; $^{2}$aiacob@georgiasouthern.edu;\\
$^{3}$wangyucheng2358@163.com}

\vspace{0.35cm}
{\small\itshape Dedicated to Edgar E. Enochs, with deep admiration and gratitude,\\
in recognition of his profound and lasting influence on homological algebra.}
\end{center}




\centerline {\bf  Abstract}
\bigskip
\leftskip10truemm \rightskip10truemm
Let $R$ be a ring and let $\Ch(R)$ be the category of complexes of left $R$-modules. We consider the C-E (Cartan-Eilenberg)
exact structure on $\Ch(R)$ and show that it is an efficient exact category. We prove that if $R$ is a
left virtually Gorenstein ring, the pair $(\text{C-E}({\rm GProj}),\text{C-E}({\rm GInj}))$ is a C-E-admissible
balanced pair. Under suitable additional hypotheses, the converse holds. We also consider the C-E version of Tate cohomology
and establish C-E analogues of the Avramov-Martsinkovsky exact sequences and balance results for both $\Ext$ and $\Tor$.

\leftskip0truemm \rightskip0truemm

\section{Introduction}

Cartan--Eilenberg projective and injective complexes were introduced by
Verdier in his study of derived categories \cite{V96}. They provide
resolutions of complexes which are more sensitive than ordinary degreewise
projective or injective resolutions, since they control not only the terms of a
complex but also its cycles, boundaries, cokernels and homology modules. Enochs
developed this point of view further in \cite{E11}, where the
Cartan--Eilenberg exact structure on the category of chain complexes was studied
systematically. Since then, Cartan--Eilenberg Gorenstein projective, injective
and flat complexes, as well as their associated homological dimensions, have
been studied by several authors; see, for instance,
\cite{EFHO26, YL14-1, YL14}.

Let $R$ be a ring and let $\Ch(R)$ denote the category of complexes of left
$R$-modules. The Cartan--Eilenberg exact structure on $\Ch(R)$, denoted
throughout this paper by C-E, is defined by requiring a short exact sequence of
complexes
$$
0 \longrightarrow X \longrightarrow Y \longrightarrow Z \longrightarrow 0
$$
to be exact not only degreewise, but also after applying the cycle, boundary,
cokernel and homology functors. Thus the exact category $(\Ch(R),\text{\rm C-E})$
is naturally adapted to homological questions in which the behaviour of cycles,
boundaries and homology has to be controlled simultaneously.

From the point of view of approximation theory, it is also important that the chosen exact structure be sufficiently compatible with set-theoretic constructions. In particular, one wants $(\Ch(R),\CE)$ to be efficient in the sense of \v{S}\v{t}ov\'{\i}\v{c}ek \cite{S13}, following the earlier framework of Saor\'{\i}n and \v{S}\v{t}ov\'{\i}\v{c}ek \cite{SS11}. This is proved in Proposition \ref{prop-1}. This allows to transfer the Eklof-Trlifaj set-theoretic approximation machinery, originally developed for module categories with their usual exact structure, to exact categories such as $(\Ch(R),\CE)$. This justifies why the Cartan--Eilenberg exact structure is a useful framework for constructing special approximations, complete cotorsion pairs and model structures in categories of complexes.

A further motivation for using the $\CE$-exact structure comes from the theory of
ghost ideals and from the abelian version of the generalized generating
hypothesis, as recently developed by Estrada, Fu, Herzog and Odaba\c s\i\ in \cite{EFHO26}. Recall that a chain map $f\colon X\to Y$ is called a ghost if $H_n(f)=0$
for every $n\in\mathbb Z$. In \cite[Section 8]{EFHO26}, the authors showed that the Cartan--Eilenberg exact structure is
closely related to the ghost ideal on $\Ch(R)$. In particular,
Cartan--Eilenberg projective and injective complexes provide the natural
projective and injective objects for studying this ideal from the point of view
of exact categories. The same work also shows that suitable nilpotence and convergence properties of the ghost ideal detect classical ring-theoretic information, in particular bounds on the left global dimension of $R$. Thus the Cartan--Eilenberg exact structure is not just a technical refinement of the usual exact structure on complexes; it is a setting in which ghost maps, generating hypotheses, approximation theory and global dimension interact.

The present paper continues this line of investigation from the viewpoint of
relative and Gorenstein homological algebra. Our aim is to study balanced pairs
and Tate cohomology in the exact category $(\Ch(R),\text{\rm C-E})$, replacing
ordinary projective and injective objects by Cartan--Eilenberg Gorenstein
projective and injective complexes. More precisely, following \cite[Definition 3.1]{E11}, for a class $\mathcal X$
of $R$-modules we write $$
\text{\rm C-E}(\mathcal X)
=
\{\,X\in \Ch(R)
   \mid X_n,Z_n(X),B_n(X),C_n(X),H_n(X)\in \mathcal X
      \text{ for all } n\in \mathbb Z\,\}.$$
In particular, $\CE(\mathcal P)$ and
$\CE(\mathcal I)$ are the classes of C-E projective and C-E
injective complexes, respectively, where $\mathcal P$ and $\mathcal I$
denote the classes of projective and injective $R$-modules. This notation
allows us to transfer module-theoretic cotorsion pairs to the
Cartan--Eilenberg exact category of complexes.

Balanced pairs originate from the classical fact that the derived functors
$\Ext^n_R(M,N)$ may be computed either by using a projective resolution of
$M$ or an injective resolution of $N$. Relative versions of this phenomenon
were developed in order to determine when a relative derived functor is
independent of the chosen side of the resolution. In an abelian category, Chen \cite{C10}
proved that if $(\mathcal X,\mathcal Y)$ is a balanced pair, then the
corresponding relative derived functor $
\Ext^n_{(\mathcal X,\mathcal Y)}(M,N)$
can be computed either from a left $\mathcal X$-resolution of $M$ or from a
right $\mathcal Y$-resolution of $N$. The same philosophy applies
in exact categories with enough projective and injective objects.

Our first main result shows that, over a virtually Gorenstein ring, the
Cartan--Eilenberg Gorenstein projective and injective complexes form such a
balanced pair.

 \begin{theorem}
        (= Theorem \ref{thm-1}) Over any left virtually Gorenstein ring $R$, the pair
    $$(\CE({\rm GProj}),\CE({\rm GInj}))$$ is a $\CE$-admissible balanced pair.
    \end{theorem}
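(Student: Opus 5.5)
Over a left virtually Gorenstein ring $R$, the classes $\mathrm{GProj}$ and $\mathrm{GInj}$ form a balanced pair in $R$-Mod. More precisely, $(\mathrm{GProj},\mathrm{GProj}^{\perp})$ and $({}^{\perp}\mathrm{GInj},\mathrm{GInj})$ are complete hereditary cotorsion pairs. Virtual Gorensteinness gives $\mathrm{GProj}^{\perp}={}^{\perp}\mathrm{GInj}$; call this class $\mathcal W$. Every module then has a special $\mathrm{GProj}$-precover and a special $\mathrm{GInj}$-preenvelope. Balance follows from the standard criterion (Chen, Enochs--Jenda): the $\mathrm{GProj}$-resolutions are $\Hom(-,\mathrm{GInj})$-exact and the $\mathrm{GInj}$-coresolutions are $\Hom(\mathrm{GProj},-)$-exact. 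Both hold because their kernels and cokernels lie in $\mathcal W$ and $\Ext^1(\mathrm{GProj},\mathcal W)=0=\Ext^1(\mathcal W,\mathrm{GInj})$.

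The plan is to lift this module-level picture to $(\Ch(R),\CE)$ using the transfer of cotorsion pairs from modules to C-E complexes, in the style of \cite{E11}. The key tool is the following. If $(\mathcal A,\mathcal B)$ is a complete hereditary cotorsion pair of modules, then $(\CE(\mathcal A),\CE(\mathcal B))$ is a complete hereditary cotorsion pair in the exact category $(\Ch(R),\CE)$. To prove it, use that a C-E exact sequence splits into short exact sequences on $Z_n,B_n,C_n,H_n$, and that
$$\Ext^1_{\CE}(X,Y)=0$$
can be reduced, via C-E projective/injective resolutions (sums of shifted disks and spheres), to module-level $\Ext$ vanishing. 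Completeness comes either from the efficiency in Proposition \ref{prop-1} together with a deconstructibility argument (the classes $\mathrm{GProj}$, $\mathrm{GInj}$, $\mathcal W$ are deconstructible or of the right cardinality over virtually Gorenstein rings), or more concretely as follows. Take special approximations of the homology and boundary modules, and assemble C-E approximations as in Enochs' construction of C-E projective resolutions. I would try the second route first, since it avoids set theory.

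With this in hand, apply it to both cotorsion pairs. This gives that $\CE(\mathrm{GProj})$ is special precovering and $\CE(\mathrm{GInj})$ is special preenveloping in $(\Ch(R),\CE)$, with a common right/left class $\CE(\mathcal W)$. The $\CE$-admissibility means every C-E precover is a C-E deflation, and dually. This follows because $\CE(\mathrm{GProj})$ contains the C-E projectives: a projective module is Gorenstein projective, so a C-E projective complex lies in $\CE(\mathrm{GProj})$. Hence a special $\CE(\mathrm{GProj})$-precover can be modified by adding a C-E projective cover, which makes it a C-E epimorphism; dually for injectives. Balance then follows by repeating Chen's argument inside the exact category. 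Given a C-E $\CE(\mathrm{GProj})$-resolution of $X$ with syzygies in $\CE(\mathcal W)$, applying $\Hom(-,G)$ with $G\in\CE(\mathrm{GInj})$ stays exact because $\Ext^1_{\CE}(\CE(\mathcal W),\CE(\mathrm{GInj}))=0$. The dual statement is symmetric.

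The main obstacle is the transfer step, in particular identifying $\CE(\mathcal A)^{\perp_{\CE}}$ with $\CE(\mathcal B)$ and ensuring the two right/left classes agree. Equality $\CE(\mathrm{GProj})^{\perp}={}^{\perp}\CE(\mathrm{GInj})$ needs both to equal $\CE(\mathcal W)$. I would establish it through a description of $\Ext^1_{\CE}$ in terms of extensions of the cycle and homology data. The facts I expect to use are that $X\in\CE(\mathcal A)$ if and only if $B_n(X),H_n(X)\in\mathcal A$, for $\mathcal A$ closed under extensions, and that C-E projectives are exactly sums of $\overline{P}$-disks and $P$-spheres.
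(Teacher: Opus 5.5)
Your architecture matches the paper's. You lift the hereditary cotorsion triple $(\mathrm{GProj},\mathcal W,\mathrm{GInj})$ of $\Mod R$ to a complete hereditary cotorsion triple $(\CE(\mathrm{GProj}),\CE(\mathcal W),\CE(\mathrm{GInj}))$ in $(\Ch(R),\CE)$, and then run Chen's argument, which is Proposition~\ref{prop-2}.

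The only real difference is how completeness of the lifted pairs is obtained. The paper takes the following steps:
\begin{enumerate}
\item both module pairs are cogenerated by sets (from \cite{SS20});
\item \cite[Theorem 9.4]{E11} transfers set-cogeneration to the C-E pairs;
\item completeness then follows from efficiency of $(\Ch(R),\CE)$ (Proposition~\ref{prop-1}) and \cite[Proposition 5.8]{S13}.
\end{enumerate}
That is essentially your route (a), and it is why Proposition~\ref{prop-1} is needed.

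Your preferred route (b) builds C-E special approximations directly from special approximations of the $B_n$ and $H_n$. This is the transfer theorem \cite[Theorem 3.9]{YL14-1}, which the paper itself invokes in Proposition~\ref{prop-10}. It applies to any complete hereditary pair of modules and needs neither set-theoretic input nor efficiency. However, it is a genuinely hands-on construction, including the identification $\CE(\mathcal A)^{\perp}=\CE(\mathcal B)$, and your sketch ``via disks and spheres'' does not carry it out. In practice you should cite it rather than re-derive it. Once the transfer is available, the agreement of the two middle classes, which you flag as the main obstacle, is automatic: both lifted pairs have $\CE(\mathcal W)$ in the middle.

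Two corrections.

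First, in the balance step, $\Ext^1_{\CE}(\CE(\mathcal W),\CE(\mathrm{GInj}))=0$ only makes $\Hom(-,Y)$ exact on the conflations $K_{i+1}\rightarrowtail G_{i+1}\twoheadrightarrow K_i$ whose right end is a syzygy. For the first conflation $K_0\rightarrowtail G_0\twoheadrightarrow M$, the complex $M$ is arbitrary and $\Ext^1_{\CE}(M,Y)$ need not vanish. So surjectivity of $\Hom(G_0,Y)\to\Hom(K_0,Y)$ requires the device used in Proposition~\ref{prop-2}:
\begin{enumerate}
\item embed $K_0$ in a C-E injective $I$; by heredity the cokernel lies in $\CE(\mathcal W)$;
\item extend $K_0\rightarrowtail I$ over $G_0$, using injectivity of $I$;
\item extend a given map $K_0\to Y$ through $I$, using $\Ext^1_{\CE}(\CE(\mathcal W),Y)=0$;
\item compose with $G_0\to I$ to get the required extension to $G_0$.
\end{enumerate}

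Second, $\CE$-admissibility in the paper only asks for some approximation that is a deflation, not that every precover be one. Special precovers obtained from the complete cotorsion pair are by definition deflations of conflations in $(\Ch(R),\CE)$. So no modification by adding C-E projectives is needed.
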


 This extends \cite[Theorem 2.22]{YL14}, where the corresponding result was
proved under the stronger assumption that the ring has finite global Gorenstein
projective dimension. The virtual Gorenstein condition is the natural hypothesis
here, because it identifies the two perpendicular classes  ${\rm GProj}^{\perp}={}^{\perp}{\rm GInj},$
and this allows the module Gorenstein cotorsion pairs to be lifted
to the category of complexes with the $\CE$-exact structure.

We also obtain a converse under a completeness assumption.

   \begin{theorem}
        (= Theorem \ref{thm-2}) If the cotorsion pair $({\rm GProj},{\rm GProj}^{\bot})$ is complete, then the following statements are equivalent.

    {\rm (1)} $R$ is a left virtually Gorenstein ring.

    {\rm (2)} The pair $(\CE({\rm GProj}),\CE({\rm GInj}))$ is a $\CE$-admissible balanced pair in the exact category $(\Ch(R),\CE)$.

    {\rm (3)} The pair $({\rm GProj},{\rm GInj})$ is an admissible balanced pair in $\Mod R$.
    \end{theorem}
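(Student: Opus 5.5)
We prove the cycle (1)$\Rightarrow$(2)$\Rightarrow$(3)$\Rightarrow$(1). Throughout we use the standing assumption that $({\rm GProj},{\rm GProj}^{\perp})$ is complete. We also use the dictionary between modules and complexes under the $\CE$-exact structure. A $\CE$-exact sequence of complexes $0\to X\to Y\to Z\to 0$ yields, for each $n$, short exact sequences of modules in degrees, cycles, boundaries, cokernels and homology. Conversely, a module $M$ can be placed as a complex. Two embeddings will be used: the stalk complex $S^n(M)$, and the disk $D^n(M)=(\cdots\to 0\to M\xrightarrow{1} M\to 0\to\cdots)$. Both lie in $\CE(\mathcal X)$ when $M\in\mathcal X$ and $0\in\mathcal X$, which holds for $\mathcal X={\rm GProj},{\rm GInj}$.

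\textbf{(1)$\Rightarrow$(2).} This is exactly Theorem \ref{thm-1}, so nothing new is needed.

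\textbf{(2)$\Rightarrow$(3).} We restrict the balanced pair to stalk complexes. Let $M$ be a module. A $\CE$-admissible $\CE({\rm GProj})$-resolution of the stalk complex $S^0(M)$, taken in the exact category $(\Ch(R),\CE)$, yields after applying $H_0$ an exact ${\rm GProj}$-resolution of $M$. It is ${\rm Hom}(\GProj,-)$-exact: for $G\in{\rm GProj}$ we have $S^0(G)\in\CE({\rm GProj})$, and $\Hom(S^0(G),-)$ of a complex with zero differentials in the relevant range reduces to $\Hom(G,H_0(-))$. Alternatively, one takes $Z_0$, which agrees with $H_0$ on stalk complexes.

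Admissibility of the module pair then follows. Dually we obtain right ${\rm GInj}$-coresolutions. For balance, the $\CE$-balance states that a sequence in $\Ch(R)$ is $\Hom(\CE({\rm GProj}),-)$-exact if and only if it is $\Hom(-,\CE({\rm GInj}))$-exact. Apply this to stalk sequences $S^0(\mathbb{E})$, where $\mathbb{E}$ is an exact complex of modules. The problem is to translate $\Hom(C,S^0(\mathbb E))$ for $C\in\CE({\rm GProj})$ into module Hom's. By adjunction,
\[
\Hom_{\Ch}(C,S^0(N))\cong\Hom_R(C_0/B_0(C),N)\cap\{\text{maps killing }\ldots\},
\]
which reduces to maps out of $C_0/B_0$ that vanish on the image of $C_1$. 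Since $C_0/B_0=C_0(C)$ is Gorenstein projective by definition of $\CE({\rm GProj})$, module-level exactness against ${\rm GProj}$ transfers in both directions. I expect this bookkeeping with cokernels and cycles to be the main technical step of the proof.

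\textbf{(3)$\Rightarrow$(1).} Here completeness is used. For $M\in{\rm GProj}^{\perp}$, take a special ${\rm GInj}$-preenvelope $0\to M\to E\to L\to 0$, which exists by admissibility of the pair. Balance implies that this sequence is $\Hom({\rm GProj},-)$-exact. Since $M\in{\rm GProj}^\perp$, comparing the ${\rm GProj}$-resolution computation with the ${\rm GInj}$-coresolution computation of the balanced functor $\Ext_{({\rm GProj},{\rm GInj})}$ gives
\[
\Ext^i_{({\rm GProj},{\rm GInj})}(G,M)\cong\Ext^i_R(G,M)=0
\]
for all Gorenstein projective $G$.

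The goal is to show $M\in{}^{\perp}{\rm GInj}$, and conversely for modules in ${}^\perp{\rm GInj}$. Completeness of $({\rm GProj},{\rm GProj}^\perp)$ lets us write any module $N$ via $0\to K\to G\to N\to 0$ with $G\in{\rm GProj}$ and $K\in{\rm GProj}^\perp$. Balance then identifies the relative Ext with absolute Ext on both sides, which yields ${\rm GProj}^\perp={}^\perp{\rm GInj}$, that is, virtual Gorensteinness.

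I expect the most delicate part to be identifying relative Ext with absolute Ext. The plan is to show that, under balance, ${\rm GProj}^\perp$ coincides with the class of modules $M$ such that every ${\rm GInj}$-coresolution of $M$ is $\Hom({\rm GProj},-)$-exact. This should follow by dimension shifting along special approximations.
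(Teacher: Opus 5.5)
Your (1)$\Rightarrow$(2) is fine (it is Theorem \ref{thm-1}). The genuine gap is in (3)$\Rightarrow$(1), which is the substantive half of the converse, and your proposal only gestures at it.

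\textbf{The gap in (3)$\Rightarrow$(1).} The displayed identity $\Ext^i_{({\rm GProj},{\rm GInj})}(G,M)\cong\Ext^i_R(G,M)=0$ for $G\in{\rm GProj}$ is vacuous. Relative Ext with a Gorenstein projective in the first variable vanishes for every $M$, so it tells you nothing about $\Ext^1_R(M,E)$ with $E\in{\rm GInj}$, which is what $M\in{}^\perp{\rm GInj}$ requires. The planned characterization also fails as stated. Under (3), Lemma \ref{lem-1} makes every $\Hom(-,{\rm GInj})$-exact short exact sequence $\Hom({\rm GProj},-)$-exact. So ``modules all of whose ${\rm GInj}$-coresolutions are $\Hom({\rm GProj},-)$-exact'' is all of $\Mod R$, not ${\rm GProj}^\perp$.

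The missing idea is ${\rm GProj}\cap{\rm GProj}^\perp=\mathcal P$ and ${}^\perp{\rm GInj}\cap{\rm GInj}=\mathcal I$. Take $M\in{\rm GProj}^\perp$ and the special precover $0\to K\to G\to M\to 0$ given by the completeness hypothesis. Since ${\rm GProj}^\perp$ is closed under extensions, $G\in{\rm GProj}\cap{\rm GProj}^\perp$, so $G$ is projective. The sequence is $\Hom({\rm GProj},-)$-exact because $K\in{\rm GProj}^\perp$. By Lemma \ref{lem-1} it is then $\Hom(-,E)$-exact for every $E\in{\rm GInj}$, hence $\Ext^1_R(M,E)=\Coker(\Hom(G,E)\to\Hom(K,E))=0$. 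In your relative-Ext language: the special ${\rm GProj}$-resolution of $M\in{\rm GProj}^\perp$ is a projective resolution, so relative and absolute Ext of $M$ agree, and balance computes $\Ext^1(M,E)$ from the trivial coresolution of $E$.

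Dually, a special ${\rm GInj}$-preenvelope of $M\in{}^\perp{\rm GInj}$ has injective middle term, which gives ${}^\perp{\rm GInj}\subseteq{\rm GProj}^\perp$. Such special preenvelopes come from \cite[Theorem 5.6]{SS20}, not from admissibility, which only gives a monic preenvelope. This is exactly the argument of Proposition \ref{prop-7}, i.e.\ the content of \cite[Corollary 4.8]{EPZ20} that the paper cites for (1)$\Leftrightarrow$(3).

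\textbf{Your (2)$\Rightarrow$(3).} This is a different route from the paper. The paper applies Corollary \ref{coro-2} in $(\Ch(R),\CE)$ to get $\CE({\rm GProj}^\perp)=\CE({}^\perp{\rm GInj})$, reads off (1) on stalk complexes, and then invokes \cite{EPZ20}. Restricting the balance directly to stalk complexes can work, but the adjunctions must be correct:
\[
\Hom_{\Ch(R)}(S^0(N),Y)\cong\Hom_R(N,Z_0(Y)) \quad\text{(not $H_0$)},\qquad \Hom_{\Ch(R)}(X,S^0(N))\cong\Hom_R(\Coker(X_1\to X_0),N).
\]
You then need three facts: $Z_0(Y)\in{\rm GInj}$ for $Y\in\CE({\rm GInj})$; $\Coker(X_1\to X_0)\in{\rm GProj}$ for $X\in\CE({\rm GProj})$; and $S^0$ sends short exact sequences of modules to $\CE$-conflations. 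With these, Lemma \ref{lem-1} in $(\Ch(R),\CE)$ shows that a short exact sequence of modules is $\Hom({\rm GProj},-)$-exact iff it is $\Hom(-,{\rm GInj})$-exact. Splicing special ${\rm GProj}$-precovers and special ${\rm GInj}$-preenvelopes then gives (BP0)--(BP2) in $\Mod R$.

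With that step repaired and the argument above supplied, your cycle becomes a valid proof. It reproves the needed direction of \cite[Corollary 4.8]{EPZ20} rather than citing it.
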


The second part of the paper is devoted to Tate cohomology and Tate homology in
the Cartan--Eilenberg setting. Tate cohomology was introduced by Tate \cite{T52} in the
context of class field theory and was later generalized by Avramov and
Martsinkovsky to Gorenstein homological algebra \cite{AM02}.
Iacob proved a balance theorem for generalized Tate cohomology \cite{I05}.
Here we develop the corresponding C-E version. If a complex $M$ admits a complete C-E
projective resolution $ T \longrightarrow P \longrightarrow M,$
we define the C-E Tate cohomology groups by $\widehat{\Ext}^{\,n}_{\text{\rm C-E}}(M,N)=H^n(\operatorname{Hom}(T,N)).$
Dually, complete C-E injective resolutions give the corresponding injective
version. We prove that, whenever
\[
T\longrightarrow P\longrightarrow M
\qquad\text{and}\qquad
N\longrightarrow I\longrightarrow U
\]
are complete C-E projective and complete C-E injective resolutions, respectively, the cohomology groups of $\Hom(T,N)$ and $\Hom(M,U)$ are naturally isomorphic. Therefore,
$$
\widehat{\Ext}^{\,n}_{\CE}(M,N)
\cong
\overline{\Ext}^{\,n}_{\CE}(M,N)
\qquad(n\in\mathbb Z).$$

We also compare our construction with Gillespie's model-categorical approach to Tate derived functors (see \cite{G21}). In the virtually Gorenstein case, the relevant Cartan--Eilenberg cotorsion pairs give exact model structures, and the equality \(\GProj^{\perp}={}^\perp\GInj\) identifies the two middle classes. This comparison shows that our C-E Tate cohomology agrees with the corresponding relative derived functors obtained from exact model structures by applying Gillespie's techniques, while our definition is given directly in terms of complete Cartan--Eilenberg resolutions.

Finally, we establish C-E analogues of the Avramov--Martsinkovsky exact sequences. For instance, if $M$ has finite C-E Gorenstein projective dimension, then there is a long exact sequence relating relative C-E cohomology, ordinary C-E cohomology and C-E Tate cohomology. We also discuss the corresponding C-E Tate homology theory for $\Tor$ and prove a balance result under suitable left and right virtual Gorenstein hypotheses.

The paper is organized as follows. Section~2 recalls the necessary background on exact categories, Cartan--Eilenberg exact sequences, C-E projective and injective complexes, Gorenstein homological algebra and virtually Gorenstein rings. Section~3 studies balanced pairs in exact categories and then specializes to $(\Ch(R),\CE)$. Section~4 introduces $\CE$ Tate cohomology and C-E Tate homology, proves the corresponding balance theorems (see Theorems \ref{thm-4} and \ref{thm-5}), and establishes Cartan--Eilenberg versions of the Avramov--Martsinkovsky exact sequences (see Corollary \ref{coro-3} and Theorem \ref{thm-3}).

\section{Preliminaries}
Throughout this paper, all rings are associative rings with $1$. Let $R$ be a ring. We use $\Mod R$ to
denote the category of left $R$-modules,
and all subcategories of $\Mod R$ involved are full and closed under isomorphisms.
\subsection{Exact categories}
Throughout this paper, we will work in the context of an exact category. Let $\mathcal{C}$ be an additive category. Recall from \cite{B10} that an exact category
$(\mathcal{C},\mathcal{E})$ is an isomorphism-closed collection of
distinguished kernel-cokernel pairs $(i,p)$ in $\mathcal{C}$ $$\xi :A\stackrel{i}\to B
\stackrel{p}\to C,$$ where $i$ and $p$ are called, respectively, the {\it inflation} and the {\it deflation} of
$\xi$, respectively, satisfying:

{\rm (i)} For every object $C\in\mathcal{C}$, the identity morphism $1_{C}$ is both an inflation and a deflation;

{\rm (ii)} Inflations and deflations are closed under composition;

{\rm (iii)} The class $\mathcal{E}$ is closed under pushout and pullback along any morphism in $\mathcal{C}$.

A kernel-cokernel pair $(i,p)$, or $\xi$, in $\mathcal{E}$ is called a {\it conflation} in
$(\mathcal{C},\mathcal{E})$. We will also sometimes say that $C$ is the cokernel of $i$ and
use the notation $C=B/A$; $A$ is the kernel of $p$, which may be denoted by $A\subseteq B$.
For given objects $A$ and $C$ in $\mathcal{C}$, the axioms of exact categories allow
for the usual construction of the Yoneda Ext bifunctor $\Ext^1_{\mathcal{C}}(M,N)$. It is the
abelian group of equivalence classes of conflations $N\rightarrowtail Z\twoheadrightarrow M$
with respect to the {\it Baer sum} operation. In particular, we obtain that
$\Ext^1_{\mathcal{C}}(M,N)=0$ iff every conflation $N\rightarrowtail Z\twoheadrightarrow M$ is
isomorphic to $N\rightarrowtail N\oplus M\twoheadrightarrow M$. In general, we say a conflation
$\xi:A\stackrel{i}\to B \stackrel{p}\to C$ in $\mathcal{E}$ is {\it trivial} if it represents
$0$, that is, the morphism $i$ is a section: there exists a morphism $r:B\to A$ such that
$ri=1_A$; or equivalently, $p$ is a retraction: there exists a morphism $s:C\to B$ such
that $ps=1_C$; see \cite[Remark 7.4]{B10}.
We say an object $I\in\mathcal{C}$ is {\it injective} if any inflation $I\rightarrowtail Z$
splits. It follows that $I$ is injective if and only if $\Ext^1_{\mathcal{C}}(N,I)=0$ for
any $N\in\mathcal{C}$, or equivalently, $\Hom_{\mathcal{C}}(-,I)$ is exact.
{\it Projective objects} in $\mathcal{C}$ are defined dually. An exact category $\mathcal{C}$
is said to have {\it enough projectives} if for every object $M\in\mathcal{C}$ there exists a
projective object $P$ and a deflation $P\to M$, see \cite[Definition 11.9]{B10}. The notion
of having {\it enough injectives} is defined dually.
An exact category is said to be {\it weakly idempotent complete} (WIC for short) if every
retraction has a kernel, see \cite[Section 7]{B10}.

\subsection{Cartan-Eilenberg complexes and Cartan-Eilenberg short exact sequences}
Let $\Ch (R)$ be the category whose objects are complexes of left
$R$-modules of the following form $X^{\bullet}=((X_n),(d_n))$: $$ \cdots\to X_{n+1}
\stackrel{d_{n+1}}\longrightarrow X_n\stackrel{d_{n}}\longrightarrow X_{n-1}\to \cdots$$ with $d_{n}d_{n+1}=0$ for each
$n\in\mathbb{Z}$ and whose morphisms are the chain maps. If $X^{\bullet}$ is a complex, we
denote by $\Sigma(X^{\bullet})$ the suspension of $X^{\bullet}$ with
$\Sigma(X^{\bullet})_n=X_{n-1}$ and $d^{\Sigma(X^{\bullet})}_n=-d_{n-1}^{X^{\bullet}}$.
For a complex $X^{\bullet}$, we write $\tau_{<n}X^{\bullet}$ for the {\it truncations} of
$X^{\bullet}$, i.e.,
\begin{equation*}
    \tau_{<n}X^{\bullet}_i=\begin{cases}
        X^{\bullet}_i, & i<n, \\
        0, & i\geq n.
    \end{cases} \quad \text{and} \quad     d_i^{\tau_{<n}X^{\bullet}}=\begin{cases}
        d^{X^{\bullet}}_i, & i<n, \\
        0, & i\geq n.
    \end{cases}
\end{equation*}
The notion of $\tau_{>n}X^{\bullet}$ is defined dually.
We denote by $\Sigma^k(X^{\bullet})$ the $k$-th suspension of $X^{\bullet}$ where
$k\in\mathbb{Z}$. If $M$ is a left $R$-module, then $M$ can be thought of as a complex
concentrated at degree $0$ called the {\it stalk complex of $M$}.
Let $S^0(M)$ be the stalk complex of $M$, i.e. $S^0(M)=\cdots \to 0\to M\to 0\to \cdots$.
Similarly, let $D^0(M)$ be the complex $\cdots\to 0\to M\stackrel{1}\to M\to 0\to \cdots$ with
the two $M$'s in the $1$-st and $0$-th place and let $S^n(M):=\Sigma^n(S^0(M))$ and
$D^n(M):=\Sigma^n(D^0(M))$ for each $n\in\mathbb{Z}$.
For a complex $X^{\bullet}$, we write $Z_n(X^{\bullet})=\Ker(X_n\stackrel{d_{n}}\to
X_{n-1})$ and $B_n(X^{\bullet})=\Im(X_n\stackrel{d_{n}}\to X_{n-1})$ for the $n$-th cycle
and the $n$-th boundary, respectively. Similarly, we also denote by
$C_n(X^{\bullet})=\Coker(X_n\stackrel{d_{n}}\to X_{n-1})$ and
$H_n(X^{\bullet})=Z_n(X^{\bullet})/B_{n+1}(X^{\bullet})$ the $n$-th cokernel and the $n$-th
homology, respectively. For any subcategory $\mathcal{X}$ of $\Mod R$, the class of all
complexes with each degree in $\mathcal{X}$ is denoted by $\Ch(\mathcal{X})$.

\begin{proposition}\cite[Lemma 5.2]{E11}
    Let $\xi:0\to X^{\bullet}\to Y^{\bullet}\to Z^{\bullet}\to 0$ be a short exact sequence of chain complexes of left $R$-modules. The following are equivalent:

    {\rm (i)} For every $n\in\mathbb{Z}$, the sequence $Z_n(\xi)$ is exact.

    {\rm (ii)} For every $n\in\mathbb{Z}$, the sequence $B_n(\xi)$ is exact.

    {\rm (iii)} For every $n\in\mathbb{Z}$, the sequence $C_n(\xi)$ is exact.

    {\rm (iv)} For every $n\in\mathbb{Z}$, the sequence $H_n(\xi)$ is exact.
\end{proposition}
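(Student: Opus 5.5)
The plan is to reduce all four conditions to elementary diagram chases built from the short exact sequences that relate the functors $Z_n$, $B_n$, $C_n$, $H_n$. For each $n$ there are natural short exact sequences of modules
\[
0\to Z_n(X)\to X_n\to B_{n-1}(X)\to 0,\qquad 0\to B_{n}(X)\to Z_{n-1}(X)\to H_{n-1}(X)\to 0,
\]
\[
0\to B_{n}(X)\to X_{n-1}\to C_{n}(X)\to 0,\qquad 0\to H_{n-1}(X)\to C_{n}(X)\to B_{n-1}(X)\to 0,
\]
all functorial in $X$. Here I read $B_n=\Im(d_n)\subseteq X_{n-1}$ and $C_n=\Coker(d_n)=X_{n-1}/B_n$, as in the paper's conventions. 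Applying these to $\xi$ gives $3\times 3$ diagrams in which the middle row of each is either a degree of $\xi$ (hence exact) or another of the sequences $Z(\xi)$, $B(\xi)$, $C(\xi)$, $H(\xi)$.

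A first observation is that each of $Z_n(\xi)$, $B_n(\xi)$, $C_n(\xi)$ is automatically ``half exact'':
\begin{itemize}
\item $Z_n$ is left exact, so $0\to Z_n(X)\to Z_n(Y)\to Z_n(Z)$ is always exact;
\item $C_n$ is right exact, so $C_n(X)\to C_n(Y)\to C_n(Z)\to 0$ is always exact;
\item $B_n(Y)\to B_n(Z)$ is always surjective, and $B_n(X)\to B_n(Y)$ is always injective.
\end{itemize}
So only the remaining missing pieces need to be proved in each case.

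The key tool is the snake lemma applied to the map of short exact sequences $\xi_n\to\xi_{n-1}$ given by $d$. It yields the exact sequence
\[
0\to Z_n(X)\to Z_n(Y)\to Z_n(Z)\xrightarrow{\partial} C_n(X)\to C_n(Y)\to C_n(Z)\to 0 .
\]
Hence (i) holds, i.e.\ $Z_n(Y)\to Z_n(Z)$ is onto for all $n$, iff $\partial=0$ for all $n$, iff $C_n(X)\to C_n(Y)$ is injective for all $n$, which is (iii). Thus (i)$\Leftrightarrow$(iii) directly, and in fact degreewise.

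For (i)$\Rightarrow$(ii), use the $3\times3$ diagram with rows $Z_n(\xi)$, $\xi_n$, $B_{n-1}(\xi)$. When the first two rows are exact, the third is exact by the $3\times3$ lemma. Conversely, for (ii)$\Rightarrow$(i), the diagram with rows $B_n(\xi)$, $\xi_{n-1}$, $C_n(\xi)$ gives exactness of $C_n(\xi)$ from exactness of $B_n(\xi)$ and $\xi_{n-1}$. So (ii)$\Rightarrow$(iii)$\Rightarrow$(i).

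For (i)$\wedge$(ii)$\Rightarrow$(iv), use the diagram with rows $B_{n}(\xi)$, $Z_{n-1}(\xi)$, $H_{n-1}(\xi)$. Exactness of the top two rows gives exactness of the bottom one.

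The main obstacle is (iv)$\Rightarrow$(i), since exactness of homology alone does not obviously propagate to cycles. Here I would use the long exact homology sequence of $\xi$, whose connecting map $\delta:H_n(Z)\to H_{n-1}(X)$ must vanish when each $H_n(\xi)$ is short exact. Vanishing of $\delta$ means every cycle $z\in Z_n(Z)$ lifts to some $y\in Y_n$ with $d y\in X_{n-1}$ a boundary, say $dy=dx$ with $x\in X_n$. Then $y-x\in Z_n(Y)$ maps to $z$. This proves surjectivity of $Z_n(Y)\to Z_n(Z)$, which together with left exactness gives (i). This element-chase is the only non-formal step, and it closes the cycle of implications.
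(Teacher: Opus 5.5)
The paper gives no proof of this statement. It simply quotes \cite[Lemma 5.2]{E11}, so there is no in-paper argument to compare against.

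Your proof is correct and self-contained, and the implications you establish close up:
\begin{enumerate}
\item The snake lemma applied to $d\colon \xi_n\to\xi_{n-1}$ gives (i)$\Leftrightarrow$(iii), even degree by degree.
\item The $3\times 3$ lemma, in the form ``exact columns, exact top and middle rows $\Rightarrow$ exact bottom row'', gives (i)$\Rightarrow$(ii), (ii)$\Rightarrow$(iii), and (i)+(ii)$\Rightarrow$(iv).
\item Vanishing of the connecting map $\delta\colon H_n(Z)\to H_{n-1}(X)$, followed by your chase producing $y-x\in Z_n(Y)$ over $z$, gives (iv)$\Rightarrow$(i).
\end{enumerate}

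There is one small indexing slip. Under your declared convention $B_n=\Im(d_n)\subseteq X_{n-1}$, the first short exact sequence should read $0\to Z_n(X)\to X_n\to B_n(X)\to 0$, not $B_{n-1}(X)$. Accordingly, the rows of your (i)$\Rightarrow$(ii) diagram are $Z_n(\xi)$, $\xi_n$, $B_n(\xi)$. Since every condition is quantified over all $n$, this reindexing does not affect the argument.

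The fourth sequence you list, $0\to H_{n-1}\to C_n\to B_{n-1}\to 0$, is never used and can be dropped.
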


\begin{definition}
    We say a short exact sequence $0\to X^{\bullet}\to Y^{\bullet} \to Z^{\bullet}\to 0$ in
    $\Ch (R)$ is {\it Cartan-Eilenberg exact} ($\CE$-exact for short) if $0\to X_n\to Y_n
    \to Z_n\to 0$ and $0\to Z_n(X^{\bullet})\to Z_n(Y^{\bullet})\to Z_n(Z^{\bullet})\to 0$ are
    exact.
\end{definition}
Let $\text{C-E}$ be the class of all C-E short exact sequences in $\Ch (R)$. It is easy to
verify that $\text{C-E}$ is an exact structure on $\Ch(R)$ and we denote such exact
category by $(\Ch(R),\text{C-E})$.

\begin{proposition}\cite[Propositions 3.3 and 3.4]{E11}\label{prop-9}
    Let $X^{\bullet}\in\Ch(R)$. Then $X^{\bullet}$ is a projective object in $(\Ch(R),\CE)$,
    called {\it $\CE$ projective}, if and only if for all $n\in\mathbb{Z}$, all $X_n$,
    $Z_n(X^{\bullet})$, $B_n(X^{\bullet})$, $C_n(X^{\bullet})$ and $H_n(X^{\bullet})$ are
    projective left $R$-modules if and only if it is isomorphic to a chain complex of the form
    $$\bigoplus_{n\in\mathbb{Z}} D^n(P_n)\oplus \bigoplus_{n\in\mathbb{Z}}S^n(Q_n),$$
    where $P_n$ and $Q_n$ are projective left $R$-modules for all $n\in\mathbb{Z}$.

    Similarly, $X^{\bullet}$ is an injective object in $(\Ch(R),\CE)$, called {\it $\CE$ injective},
    if and only if for all $n\in\mathbb{Z}$, all $X_n$, $Z_n(X^{\bullet})$, $B_n(X^{\bullet})$,
    $C_n(X^{\bullet})$ and $H_n(X^{\bullet})$ are injective left $R$-modules if and only if it
    is isomorphic to a chain complex of the form $$\prod _{n\in\mathbb{Z}} D^n(I_n)\oplus
    \prod_{n\in\mathbb{Z}}S^n(E_n),$$ where $I_n$ and $E_n$ are injective left $R$-modules for
    all $n\in\mathbb{Z}$.
\end{proposition}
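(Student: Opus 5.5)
Since this result is quoted from Enochs \cite{E11}, I would reprove it directly. I take the projective half; the injective half is formally dual, with products replacing coproducts.

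\emph{Step 1 (the disk and sphere complexes are C-E projective).} Let $P$ be a projective module. Chain maps $D^n(P)\to Y^\bullet$ correspond to module maps $P\to Y_n$. Given a C-E exact deflation $Y^\bullet\to Z^\bullet$, each $Y_n\to Z_n$ is surjective, so maps out of $D^n(P)$ lift.

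Chain maps $S^n(P)\to Y^\bullet$ correspond to maps $P\to Z_n(Y^\bullet)$. By definition of C-E exactness, $Z_n(Y^\bullet)\to Z_n(Z^\bullet)$ is surjective, so these also lift.

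Coproducts of projective objects are projective in this exact category, because C-E sequences are closed under coproducts. Hence every complex of the displayed form is C-E projective.

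\emph{Step 2 (enough projectives).} For an arbitrary complex $X^\bullet$, choose projective modules with epimorphisms $P_n\to X_n$ and $Q_n\to Z_n(X^\bullet)$. Let
$$F=\bigoplus_n D^n(P_n)\oplus\bigoplus_n S^n(Q_n)\to X^\bullet$$
be the induced map. It is degreewise surjective, and it is surjective on cycles, since $Q_n$ already covers $Z_n(X^\bullet)$.

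The kernel sequence $0\to K\to F\to X^\bullet\to 0$ is then C-E exact. Indeed, the cycle sequence is left exact automatically and surjective by construction, so it is exact.

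\emph{Step 3 (characterize projectives as summands).} If $X^\bullet$ is C-E projective, the sequence from Step 2 splits, so $X^\bullet$ is a direct summand of $F$.

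For $F$ itself, every $F_n$, $Z_n$, $B_n$, $C_n$ and $H_n$ is a direct sum of copies of $P_n$, $P_{n-1}$, $Q_n$, and so is projective. These five functors are additive, so they commute with direct summands. Therefore the modules $X_n, Z_n, B_n, C_n, H_n$ are all projective.

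\emph{Step 4 (converse, the main step).} Suppose all the modules $X_n, Z_n, B_n, C_n, H_n$ are projective. Then:
\begin{itemize}
\item $0\to Z_n\to X_n\to B_{n-1}\to 0$ splits, since $B_{n-1}$ is projective, giving $X_n\cong Z_n\oplus B_{n-1}$;
\item $0\to B_n\to Z_n\to H_n\to 0$ splits, since $H_n$ is projective, giving $Z_n\cong B_n\oplus H_n$.
\end{itemize}

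Choose these splittings compatibly. The differential $X_n\to X_{n-1}$ is then the composite $X_n\to B_{n-1}\hookrightarrow Z_{n-1}\hookrightarrow X_{n-1}$.

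This decomposes the complex as
$$X^\bullet\cong\bigoplus_n D^n(B_{n-1})\oplus\bigoplus_n S^n(H_n),$$
with each $D^n(B_{n-1})$ occupying degrees $n$ and $n-1$. Both $B_{n-1}$ and $H_n$ are projective, so $X^\bullet$ has the required form, and by Step 1 it is C-E projective. This closes the cycle of equivalences.

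The main obstacle is the bookkeeping in Step 4. One must check that the chosen sections make the differential exactly the identity on each disk summand and zero on each sphere summand. Once the splittings are fixed degreewise, this is a routine verification.

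In the injective case there is one extra point to check: infinite products of injective modules are injective. For Step 2 one uses $\prod_n D^n(I_n)\times\prod_n S^n(E_n)$ together with injective envelopes of $X_n$ and of $C_n(X^\bullet)$. Lifting maps into $S^n(E)$ is the dual of the sphere case above: it corresponds to extending maps out of cokernels, which is possible because C-E exactness also makes the $C_n$ sequences exact.
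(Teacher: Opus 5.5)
Your proof is correct. The paper gives no argument of its own for this proposition; it is simply quoted from \cite[Propositions 3.3 and 3.4]{E11}. Your write-up is a self-contained version of the standard proof. Disks and spheres lift against C-E deflations because $Y_n\to W_n$ and $Z_n(Y)\to Z_n(W)$ are both surjective. The disk--sphere cover is a C-E deflation, which is the same device the paper uses for (Ef3) in Proposition~\ref{prop-1}. Splitting $0\to Z_n\to X_n\to B_{n-1}\to 0$ and $0\to B_n\to Z_n\to H_n\to 0$ then yields the decomposition.

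There is one point you leave implicit, in the dual of Step~2. To know that $X\to J=\prod_n D^n(I_n)\times\prod_n S^n(E_n)$ is a C-E inflation, you must pass from injectivity of $C_n(X)\to C_n(J)$ to surjectivity on cycles. That is the direction (iii)$\Rightarrow$(i) of \cite[Lemma 5.2]{E11}, recalled at the start of \S2.2, not the direction you quote. It also follows directly from the snake lemma applied to $d_n$ on the degreewise short exact sequence.

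The remaining points are cosmetic:
\begin{itemize}
\item Your indexing differs from the paper's: there $D^n(M)$ sits in degrees $n+1$ and $n$, and $B_n=\Im d_n\subseteq X_{n-1}$. This is harmless, since the statement ranges over all $n$.
\item A coproduct of projective objects is projective simply because lifting is done componentwise; closure of C-E sequences under coproducts is not the reason.
\item These disk--sphere complexes have only three nonzero summands in each degree, so sums and products of them coincide. Hence your ``infinite products'' caveat on the injective side is not needed.
\item In Step~4, no compatibility between the splittings in different degrees is required, because $B_{n-1}\subseteq X_{n-1}$ is canonical.
\end{itemize}
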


More generally, for any class $\mathcal{X}$ in $\Mod R$, we define
    $$ \text{C-E}(\mathcal{X})=\{X^{\bullet}\in\Ch(R)\,\mid\, X^{\bullet}_n,Z_n(X^{\bullet}),B_n(X^{\bullet}),C_n(X^{\bullet}),H_n(X^{\bullet})\in\mathcal{X}\, \text{for all} \,n\in\mathbb{Z}\}.$$
We note that if we denote by $\mathcal{P}$ and $\mathcal{I}$ the classes of projective and
injective modules, respectively, then $\text{C-E}(\mathcal{P})$ and
$\text{C-E}(\mathcal{I})$ are just the classes of C-E projective and C-E injective
complexes, respectively.

\subsection{Gorenstein projective modules and virtually Gorenstein rings}

We recall from \cite{EJ11-1} that a module $G\in\Mod R$ is said to be {\it Gorenstein projective} if there exists an exact sequence of modules
$$ \cdots\to P_{1}\to P_{0}\to P^1\to P^2\to\cdots,\eqno{(*)}$$
such that $G=\Im(P_0\to P^1)$ with each $P_i,P^i$ a projective module, which remains exact after
applying $\Hom(-,P)$ for an arbitrary projective module $P$. Similarly, we define the
notion of {\it Gorenstein injective modules}. A module is {\it Gorenstein flat} if it is a
syzygy in an acyclic complex $(*)$ consisting of flat modules which remains exact after
tensoring with an arbitrary injective right $R$-module $I$. The classes of all Gorenstein
projective, injective and flat modules are denoted by ${\rm GProj}$, ${\rm GInj}$ and
${\rm GFlat}$, respectively.
Let $\mathcal{X}$ be a subclass of $\Mod R$. The {\it $\mathcal{X}$-projective dimension}, $\mathcal{X}$-$\pd M$, of $M$ is
defined as $$\inf\{n\mid \text{there exists an exact sequence\,} 0 \to X_n\to \cdots\to X_1\to X_0 \to M\to 0$$
$$ \text{in\,} \Mod R \text{\,with all\,} X_i\in\mathcal{X}\},$$ and set $\mathcal{X}$-$\pd M=\infty$
if no such integer exists. Dually, the {\it $\mathcal{X}$-injective dimension}, $\mathcal{X}$-$\id M$, of $M$ is defined as
$$\inf\{n\mid \text{there exists an exact sequence\,} 0 \to M\to X^0\to X^1\to\cdots \to X^n\to 0$$
$$\text{in $\Mod R$ with all $X^i\in\mathcal{X}$}\},$$ and set $\mathcal{X}$-$\id M=\infty$
if no such integer exists. If we take $\mathcal{X}$ to be the class of Gorenstein
projective modules, then we denote ${\rm Gpd}M$ instead of ${\rm GProj}\text{-}\pd M$ and
let ${\rm GProj}_n$ and ${\rm GProj}_{\infty}$ be the class of the modules whose Gorenstein
projective dimensions are at most $n$ and finite, respectively.
By \v{S}aroch and \v{S}\v{t}ov\'{i}\v{c}ek \cite[Theorem 5.6]{SS20}, we know that the
cotorsion pair $({^{\perp}{\rm GInj}},{\rm GInj})$ is always complete for any ring $R$.
Also by \cite[Theorem 3.3]{CS25}, the pair $({\rm GProj},{\rm GProj}^{\perp})$ is a
hereditary cotorsion pair for any ring $R$. But it is an open problem whether it is
always complete for an arbitrary ring $R$. In addition, the description of the classes
${\rm GProj}^{\perp}$ and ${^{\perp}{\rm GInj}}$ remains unclear in general. We say a ring
$R$ is {\it left virtually Gorenstein} if ${\rm GProj}^{\perp}={^{\perp}{\rm GInj}}$.
The {\it right virtually Gorenstein rings} are defined dually. For left virtually
Gorenstein rings we refer the reader to \cite{BR07,BH08, WE25}.

\section{Balanced pairs in $(\Ch(R),\CE)$}
In this section, we will use cotorsion triples to construct balanced pairs in exact
categories. As an application, we focus on the exact category $(\Ch(R),\text{C-E})$ and get
some special balanced pairs. Now let $(\mathcal{C}, \mathcal{E})$ be an exact category and let
$\mathcal{X}$ be a class in $\mathcal{C}$, we write
$$^{\bot}\mathcal{X}:=\{M\in\mathcal{C}\mid\Ext_{\mathcal{C}}^1(M,X)=0\ \text{for\ any}\ X\in\mathcal{X}\},$$
$$\mathcal{X}^{\bot}:=\{M\in\mathcal{C}\mid\Ext_{\mathcal{C}}^1(X,M)=0\ \text{for\ any}\ X\in\mathcal{X}\}.$$
Then the definition of a cotorsion pair can be generalized to exact categories.

\begin{definition}\cite[Definition 2.1]{G11}
    A pair of classes $(\mathcal{X},\mathcal{Y})$ in an exact category $\mathcal{C}$ is
    called a {\it cotorsion pair} if the following conditions hold:

    {\rm (1)} $\Ext^1_{\mathcal{C}}(X,Y)=0$ for all $X\in\mathcal{X}$ and $Y\in\mathcal{Y}$.

    {\rm (2)} $^{\perp}\mathcal{Y}=\mathcal{X}$ and $\mathcal{X}^{\perp}=\mathcal{Y}$.
\end{definition}
We say that the cotorsion pair is {\it hereditary}\, \cite{G11} if $\mathcal{X}$
is closed under taking kernels of deflations between objects of $\mathcal{C}$
and if $\mathcal{Y}$ is closed under taking cokernels of inflations between
objects in $\mathcal{C}$. We note that for a WIC exact category with enough
projectives and injectives, $\mathcal{X}$ is closed under taking kernels of
deflations is equivalent to $\mathcal{Y}$ being closed under taking cokernels of
inflations, see \cite[Lemma 6.17]{S13}. We say that a cotorsion pair $(\mathcal{X},\mathcal{Y})$ is {\it cogenerated by a set} if there exists a set $\mathcal{S}$ such that $\mathcal{S}^{\perp}=\mathcal{Y}$.

A cotorsion pair $(\mathcal{X},\mathcal{Y})$ is called {\it right complete}\,\cite{L20} if for
each $C\in\mathcal{C}$ there exists a conflation $C\rightarrowtail Y\twoheadrightarrow X$ such
that $Y\in\mathcal{Y}$ and $X\in\mathcal{X}$. The notion of a {\it left complete} cotorsion
pair is defined dually. More generally, a class $\mathcal{X}$ is called {\it right complete}
if we replace $\mathcal{Y}$ by $^{\perp}\mathcal{X}$ in the above definition, and the notion
of a {\it left complete} class is defined dually. We say the cotorsion pair is {\it complete}
if it is both right complete and left complete. In fact, right completeness and left completeness are equivalent for any exact category with enough projectives and injectives,
see \cite[2.1]{SWZ25}. We call a triple $(\mathcal{X},\mathcal{Z},\mathcal{Y})$ a
{\it cotorsion triple} if the pairs $(\mathcal{X},\mathcal{Z})$ and
$(\mathcal{Z},\mathcal{Y})$ are cotorsion pairs. We say that the cotorsion triple is
{\it complete} (or {\it hereditary}) if the pairs $(\mathcal{X},\mathcal{Z})$ and
$(\mathcal{Z},\mathcal{Y})$ are both complete (or hereditary). For a subcategory $\mathcal{X}$
of $\mathcal{C}$, recall that a morphism $X\to C$ with $X\in\mathcal{X}$ is called a {\it right $\mathcal{X}$-approximation} (or {\it $\mathcal{X}$-precover}) if
$\Hom(X',X)\to\Hom(X',C)$ is surjective for any $X'\in\mathcal{X}$.
The notions of {\it left $\mathcal{X}$-approximations}
(or {\it $\mathcal{X}$-preenvelopes}) are defined dually. We say the subcategory
$\mathcal{X}$ is {\it contravariantly finite} ({\it covariantly finite}) if any object
$C\in\mathcal{C}$ has a right (left) $\mathcal{X}$-approximation. For a
subcategory $\mathcal{X}\subseteq\mathcal{C}$ and an object $C\in\mathcal{C}$,
an {\it $\mathcal{X}$-resolution} of $C$ is a complex $X$ (or $X_M$)
$:\cdots\to X_2\to X_1\to X_0\to C\to 0\to \cdots$ with each $X_i\in\mathcal{X}$ such that
it becomes exact after applying the functor $\Hom(X',-)$ for any $X'\in\mathcal{X}$. Moreover,
the complex is said to be {\it $\mathcal{E}$-exact} if it consists of conflations, i.e.,
$X_0\to C$, $X_1\to \Ker(X_0\to C)$ and $X_i\to \Ker(X_{i-1}\to X_{i-2})$, $\forall i\geq 2$
are deflations. We denote the ($\mathcal{E}$-exact) $\mathcal{X}$-resolution by
$X_{\bullet}\to C$ where $X_{\bullet}=\cdots X_2\to X_1\to X_0\to 0\to \cdots$ is the
{\it deleted $\mathcal{X}$-resolution} of $C$. The notion of ($\mathcal{E}$-exact)
$\mathcal{X}$-coresolution is defined dually. We say that a contravariantly finite subcategory
$\mathcal{X}$ is {\it $\mathcal{E}$-admissible} if there exists a right
$\mathcal{X}$-approximation which is a deflation. Dually, one can define the notion of an
{\it $\mathcal{E}$-coadmissible} covariantly finite subcategory.

\begin{definition}
    A pair $(\mathcal{X},\mathcal{Y})$ of additive subcategories in an exact category
    $(\mathcal{C},\mathcal{E})$ is called an {\it $\mathcal{E}$-admissible balanced pair} if
    the following conditions are satisfied:

    {\rm (BP0)} The subcategory $\mathcal{X}$ is $\mathcal{E}$-admissible contravariantly
    finite and $\mathcal{Y}$ is $\mathcal{E}$-coadmissible covariantly finite.

    {\rm (BP1)} For each object $M$, there is an $\mathcal{E}$-exact $\mathcal{X}$-resolution
    $X_{\bullet}\to M$ such that it is acyclic after applying the functors
    $\Hom_{\mathcal{C}}(-,Y)$ for all $Y\in\mathcal{Y}$.

    {\rm (BP2)} For each object $M$, there is an $\mathcal{E}$-exact
    $\mathcal{Y}$-coresolution $M\to Y_{\bullet}$ such that it is acyclic by applying the
    functors $\Hom_{\mathcal{C}}(X,-)$ for all $X\in\mathcal{X}$.
\end{definition}
The proof of Proposition \ref{prop-2} is similar to \cite[Proposition 2.6]{C10}, and it was first proved by Enochs, Jenda, Torrecillas and Xu in module categories.
\begin{proposition}\label{prop-2}
    Let $(\mathcal{C},\mathcal{E})$ be an exact category with enough projective and injective
    objects. Assume that $(\mathcal{X},\mathcal{Z},\mathcal{Y})$ is a complete hereditary cotorsion triple, then the pair $(\mathcal{X},\mathcal{Y})$ is an
    $\mathcal{E}$-admissible balanced pair.
\end{proposition}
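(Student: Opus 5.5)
We follow the strategy of \cite[Proposition 2.6]{C10}, building resolutions from iterated special approximations. Fix $M\in\mathcal{C}$.

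\textbf{Step 1: the $\mathcal{X}$-resolution.} By completeness of $(\mathcal{X},\mathcal{Z})$ there is a conflation $K_0\rightarrowtail X_0\twoheadrightarrow M$ with $X_0\in\mathcal{X}$ and $K_0\in\mathcal{Z}$. Since $\Ext^1_{\mathcal{C}}(X',K_0)=0$ for all $X'\in\mathcal{X}$, the long exact $\Ext$ sequence in the exact category shows that $\Hom(X',X_0)\to\Hom(X',M)\to 0$ is exact. Hence $X_0\to M$ is a right $\mathcal{X}$-approximation which is a deflation. This shows that $\mathcal{X}$ is $\mathcal{E}$-admissible contravariantly finite, which is the first half of (BP0). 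Iterating on $K_0$, then on $K_1$, and so on, and splicing the conflations, we obtain an $\mathcal{E}$-exact complex $\cdots\to X_1\to X_0\to M$ with every syzygy $K_i\in\mathcal{Z}$. Applying $\Hom(X',-)$ to each conflation $K_{i+1}\rightarrowtail X_{i+1}\twoheadrightarrow K_i$ gives short exact sequences, so the spliced complex is $\Hom(X',-)$-exact. It is therefore an $\mathcal{E}$-exact $\mathcal{X}$-resolution.

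\textbf{Step 2: (BP1).} Fix $Y\in\mathcal{Y}$. Each conflation $K_{i+1}\rightarrowtail X_{i+1}\twoheadrightarrow K_i$, where we set $K_{-1}=M$, yields the exact sequence
$$0\to\Hom(K_i,Y)\to\Hom(X_{i+1},Y)\to\Hom(K_{i+1},Y)\to\Ext^1_{\mathcal{C}}(K_i,Y).$$
For $i\ge 0$ we have $K_i\in\mathcal{Z}$, so the last term vanishes because $(\mathcal{Z},\mathcal{Y})$ is a cotorsion pair. The first conflation needs separate treatment, since $M$ itself need not lie in $\mathcal{Z}$. The obstruction there is the surjectivity of $\Hom(X_0,Y)\to\Hom(K_0,Y)$. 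Here the inclusion $\mathcal{X}\subseteq\mathcal{Z}$ is useful: it holds because $\mathcal{X}={}^\perp\mathcal{Z}$ and, by heredity, $\mathcal{Z}\cap{}^\perp\mathcal{Z}$ contains $\mathcal{X}$. In fact the required exactness is that of $\Hom(X_\bullet\to M,Y)$ at the terms $X_i$, and at $X_0$ this is exactly the statement that $\Hom(M,Y)\to\Hom(X_0,Y)\to\Hom(K_0,Y)$ is exact, which is left exactness of $\Hom(-,Y)$. Hence no surjectivity at $M$ is needed. Splicing these sequences gives exactness of $\Hom(X_\bullet,Y)$ augmented by $\Hom(M,Y)$ at each $X_i$, which is what acyclicity in (BP1) means in the sense of \cite{C10}. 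For $\mathcal{X}\subseteq\mathcal{Z}$ itself, we argue as follows. For $X\in\mathcal{X}$ and $Y\in\mathcal{Y}$, take an injective $I$ and a conflation $X\rightarrowtail I\twoheadrightarrow X''$ using enough injectives. Injective objects lie in $\mathcal{Z}$, since $\mathcal{Z}={}^\perp\mathcal{Y}$. Heredity of $(\mathcal{X},\mathcal{Z})$ then gives $X''\in\mathcal{Z}$, and dimension shifting gives $\Ext^1(X,Y)\cong\Ext^2(X'',Y)$. Using heredity of $(\mathcal{Z},\mathcal{Y})$, $\Ext^{\ge1}(\mathcal{Z},\mathcal{Y})=0$. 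Therefore $\Ext^1(X,Y)=0$ and $X\in{}^\perp\mathcal{Y}=\mathcal{Z}$.

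\textbf{Step 3: the dual side, (BP2) and the rest of (BP0).} The argument is dual to Steps 1 and 2. Completeness of $(\mathcal{Z},\mathcal{Y})$ gives conflations $M\rightarrowtail Y^0\twoheadrightarrow C^0$ with $C^0\in\mathcal{Z}$. These show that $\mathcal{Y}$ is $\mathcal{E}$-coadmissible covariantly finite, using $\Ext^1(C^0,\mathcal{Y})=0$. Iterating produces a $\mathcal{Y}$-coresolution with cosyzygies in $\mathcal{Z}$. Applying $\Hom(X,-)$ for $X\in\mathcal{X}$ gives exactness, because $\Ext^1(X,\mathcal{Z})=0$.

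The main obstacle is the bookkeeping in Step 2. One must use the higher vanishing $\Ext^{\ge1}(\mathcal{Z},\mathcal{Y})=0$, obtained from heredity via dimension shifting with enough projectives and injectives. One must also keep the precise meaning of ``acyclic'' in (BP1) and (BP2) at the augmentation term, so that no spurious surjectivity condition at $M$ is required.
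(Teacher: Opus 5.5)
There is a genuine gap in Step 2, and it sits exactly where you say nothing needs to be proved.

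Exactness of $\Hom(X_\bullet\to M,Y)$ at the spot $\Hom(X_1,Y)$ is \emph{equivalent} to surjectivity of the restriction $\Hom(X_0,Y)\to\Hom(K_0,Y)$. Since $X_2\twoheadrightarrow K_1$ is a deflation, the kernel of $\Hom(X_1,Y)\to\Hom(X_2,Y)$ is the image of the injection $\Hom(K_0,Y)\to\Hom(X_1,Y)$. The image of $\Hom(X_0,Y)\to\Hom(X_1,Y)$ is the image of $\Hom(X_0,Y)\to\Hom(K_0,Y)$ under that same injection. Left exactness of $\Hom(-,Y)$ only handles the spot $\Hom(X_0,Y)$. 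So the surjectivity you call the obstruction is precisely what (BP1) demands, and your remark about the augmentation term does not remove it.

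The inclusion $\mathcal{X}\subseteq\mathcal{Z}$ cannot help, and it is false in general. In such a triple $\mathcal{X}\cap\mathcal{Z}$ consists of projective objects; in the motivating case, ${\rm GProj}\not\subseteq{\rm GProj}^{\perp}$. Your derivation of it fails for two reasons. Heredity of $(\mathcal{X},\mathcal{Z})$ only gives $X''\in\mathcal{Z}$ if you already know $X\in\mathcal{Z}$. Also, $\Ext^1(I,Y)$ need not vanish for injective $I$, so there is no shift $\Ext^1(X,Y)\cong\Ext^2(X'',Y)$. Had the inclusion held, $\Ext^1(X_0,Y)=0$ would make the connecting map $\Hom(K_0,Y)\to\Ext^1(M,Y)$ surjective, so the needed surjectivity would fail whenever $\Ext^1(M,Y)\neq 0$.

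The same omission occurs in Step 3. For the first conflation $M\rightarrowtail Y^0\twoheadrightarrow C^0$, the group $\Ext^1(X,M)$ need not vanish, so $\Ext^1(X,\mathcal{Z})=0$ does not give surjectivity of $\Hom(X,Y^0)\to\Hom(X,C^0)$.

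The missing idea, which is the core of the paper's proof, is that every morphism from an object of $\mathcal{Z}$ to an object of $\mathcal{Y}$ extends along any inflation.
\begin{enumerate}
\item Given $K_0\rightarrowtail X_0\twoheadrightarrow M$ with $K_0\in\mathcal{Z}$, choose a conflation $K_0\rightarrowtail I\twoheadrightarrow Z'$ with $I$ injective.
\item Then $I\in\mathcal{X}^{\perp}=\mathcal{Z}$, and heredity gives $Z'\in\mathcal{Z}$. Hence $\Ext^1(Z',Y)=0$, so $\Hom(I,Y)\to\Hom(K_0,Y)$ is onto.
\item Since $I$ is injective, $K_0\rightarrowtail I$ extends along $K_0\rightarrowtail X_0$. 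Therefore every $K_0\to Y$ extends to $X_0$.
\end{enumerate}
Dually, for (BP2):
\begin{enumerate}
\item Take a deflation $P\twoheadrightarrow C^0$ with $P$ projective.
\item Its kernel lies in $\mathcal{Z}$, because $P\in{}^{\perp}\mathcal{Y}=\mathcal{Z}$ and $\mathcal{Z}$ is closed under kernels of deflations. Hence $\Hom(X,P)\to\Hom(X,C^0)$ is onto.
\item The map $P\to C^0$ lifts through $Y^0\twoheadrightarrow C^0$, so every $X\to C^0$ factors through $Y^0$.
\end{enumerate}
Your Step 1 and the (BP0) arguments are correct and agree with the paper.
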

\begin{proof}
    Let $M\in\mathcal{C}$. Since $(\mathcal{X},\mathcal{Z})$ is complete, we have a conflation
    $\xi:Z\to X\to M$ with $Z\in\mathcal{Z}$ and $X\in\mathcal{X}$.
    Since $Z\in\mathcal{X}^{\bot}$,
    it follows that $X\to M$ is a right $\mathcal{X}$-approximation by using the pullback in
    $\xi$. Then $\mathcal{X}$ is contravariantly finite. Dually we have that
    $\mathcal{Y}$ is covariantly finite. This completes the proof of {\rm (BP0)}.

    For {\rm (BP1)}, using the fact that the cotorsion pair $(\mathcal{X},\mathcal{Z})$ is
    complete, each $M$ has an $\mathcal{E}$ exact $\mathcal{X}$-resolution $X^{\bullet}$ with
    all cycles in $\mathcal{Z}$. We only need to prove that for a conflation
    $\xi:Z\to X\to M$ with $Z\in\mathcal{Z}$ and $X\in\mathcal{X}$ and an object
    $Y\in\mathcal{Y}$, the functor $\Hom_{\mathcal{C}}(-,Y)$ preserves the exactness of $\xi$. Note that, since $\mathcal{C}$ has enough injective objects, there exists a conflation $Z\to I\to Z'$ with
    $I$ injective in $\mathcal{C}$. Since $(\mathcal{X},\mathcal{Z})$ is hereditary and
    $Z,I\in\mathcal{Z}$, we have $Z'\in\mathcal{Z}$.
    Then we have the following diagram $$\xymatrix{
        Z \ar@{=}[d] \ar[r] & X \ar@{-->}[d] \ar[r] & M \ar@{-->}[d]\\
        Z \ar[r] & I \ar[r] & Z'\\
    }$$ Then $\Hom_{\mathcal{C}}(I,Y)\to \Hom_{\mathcal{C}}(Z,Y)$ is surjective since
    $\Ext_{\mathcal{C}}^1(Z',Y)=0$. It follows that,
    $\Hom_{\mathcal{C}}(X,Y)\to \Hom_{\mathcal{C}}(Z,Y)$ is surjective by
    the commuting diagram. The dual argument proves {\rm (BP2)}.
\end{proof}

We recall the notion of efficient exact categories, which slightly modifies the corresponding concept from \cite[Definition 2.6]{SS11}.
\begin{definition}\cite[Definition 3.4]{S13}
    An exact category $(\mathcal{C},\mathcal{E})$ is called {\it efficient} if

    {\rm (Ef0)} The category $\mathcal{C}$ is weakly idempotent complete.

    {\rm (Ef1)} Arbitrary transfinite compositions of inflations exist and are themselves inflations.

    {\rm (Ef2)} Every object of $\mathcal{C}$ is small relative to the class of all inflations.

    {\rm (Ef3)} The category $\mathcal{C}$ has a generator.
\end{definition}

By the above, $(\Ch(R),\text{C-E})$ is an exact category. Moreover, it is efficient.

\begin{proposition}\label{prop-1}
    The exact category $(\Ch(R),\CE)$ is an efficient exact category.
\end{proposition}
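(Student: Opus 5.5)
We check the four axioms (Ef0)--(Ef3) directly. The guiding observation is that conflations in $(\Ch(R),\CE)$ are exactly the degreewise short exact sequences that remain exact under the functors $Z_n$ (equivalently $B_n$, $C_n$, $H_n$). We also use the standard reformulation of this condition: a chain map $f\colon X^\bullet\to Y^\bullet$ is a $\CE$-inflation if and only if it is degreewise monic, $f(Z_n X)=Z_n Y\cap f(X_n)$, and $Z_n$ of the quotient is computed correctly. Equivalently, $0\to X\to Y\to Y/X\to 0$ is short exact and $0\to B_n X\to B_n Y\to B_n(Y/X)\to 0$ is exact for all $n$.

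\textbf{(Ef0).} $\Ch(R)$ is abelian, so every morphism has a kernel, and in particular every retraction does. Hence $\Ch(R)$ is weakly idempotent complete. Note that (Ef0) only concerns the underlying additive category.

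\textbf{(Ef3).} Take $G=\bigoplus_{n\in\mathbb Z}\big(D^n(R)\oplus S^n(R)\big)$. By Proposition~\ref{prop-9}, $G$ is $\CE$ projective. For any complex $X^\bullet$, choose free modules $F_n\to X_n$ and $F'_n\to Z_n(X^\bullet)$ that are epimorphisms. The induced map $\bigoplus_n D^{n}(F_n)\oplus\bigoplus_n S^n(F'_n)\to X^\bullet$ is surjective in each degree and on cycles, hence it is a $\CE$-deflation. Its source is a coproduct of copies of $G$ (up to shift bookkeeping), so $G$ is a generator in the sense of \cite{S13}: every object is a deflation image of a coproduct of copies of $G$.

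\textbf{(Ef1).} Let $X^0\to X^1\to\cdots\to X^\alpha\to\cdots$ ($\alpha<\lambda$) be a continuous $\lambda$-sequence of $\CE$-inflations, and let $X^\lambda=\varinjlim X^\alpha$ be its colimit in $\Ch(R)$. This colimit is computed degreewise and is a directed union. Directed colimits in $\Mod R$ are exact and commute with $Z_n$, $B_n$, $C_n$ and $H_n$. Therefore each $X^\alpha\to X^\lambda$ is degreewise monic and induces monomorphisms on $H_n$. The $\CE$-exactness of $0\to X^0\to X^\lambda\to X^\lambda/X^0\to 0$ is then checked through the $H_n$-criterion of \cite[Lemma 5.2]{E11}: it suffices that $H_n(X^0)\to H_n(X^\lambda)$ is monic and $H_n(X^\lambda)\to H_n(X^\lambda/X^0)$ is epic with exact middle. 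For each $\alpha$, the sequence $H_n(X^0)\to H_n(X^\alpha)\to H_n(X^\alpha/X^0)$ is short exact, since compositions of inflations are inflations and transfinite induction handles successor steps. At limit steps we pass to the directed colimit of these short exact sequences, which stays exact. This step is where care is needed: we must verify that the colimit of the quotients $X^\alpha/X^0$ is $X^\lambda/X^0$ and that $H_n$ commutes with this colimit. Both hold by exactness of filtered colimits in $\Mod R$.

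\textbf{(Ef2).} Every complex $X^\bullet$ is $\kappa$-presentable in the Grothendieck category $\Ch(R)$ for some regular cardinal $\kappa$. Hence $\Hom(X^\bullet,-)$ commutes with $\kappa$-directed colimits, in particular with transfinite compositions of inflations of cofinality at least $\kappa$, which are directed unions by (Ef1). This gives smallness relative to all inflations.

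The main obstacle is (Ef1), namely showing that the limit stages of a transfinite composition remain $\CE$-exact. The plan there is to reduce everything to exactness of filtered colimits of modules applied to the homology functors.
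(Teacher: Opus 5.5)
Your proof is correct. It follows the paper's axiom-by-axiom structure, with the same generator $G=\bigoplus_n D^n(R)\oplus\bigoplus_n S^n(R)$ and the same argument for (Ef1): transfinite induction, using exactness of direct limits in $\Mod R$ at limit stages. The paper tracks $Z_n$ where you track $H_n$, which is equivalent by \cite[Lemma 5.2]{E11}.

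You diverge from the paper in (Ef0) and (Ef2). For (Ef0), the paper checks B\"uhler's criterion \cite[Proposition 7.6]{B10}: if $gf$ is a $\CE$-deflation then so is $g$, since degreewise and cyclewise surjectivity pass from $gf$ to $g$. You instead note that weak idempotent completeness is a property of the underlying additive category, and $\Ch(R)$ is abelian. Your argument is shorter and fully sufficient. For (Ef2), the paper cites \cite[Proposition 3.13]{S13} for the abelian exact structure on $\Ch(R)$. It then observes that $\CE$-inflations are in particular monomorphisms, so smallness passes to the smaller class. You argue directly from local presentability of the Grothendieck category $\Ch(R)$. That is self-contained and gives smallness relative to all morphisms, so your remark that these compositions are directed unions by (Ef1) is unnecessary.

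One detail in (Ef3) needs tightening. If the free modules are $R^{(I_n)}$ and $R^{(J_n)}$ with index sets depending on $n$, then $\bigoplus_n D^{n}(R^{(I_n)})\oplus\bigoplus_n S^n(R^{(J_n)})$ is not literally a coproduct of copies of $G$. This is not a matter of shift bookkeeping. Choose a single set $I$ with $|I|\ge |I_n|,|J_n|$ for all $n\in\mathbb{Z}$, as the paper does, together with epimorphisms $R^{(I)}\to X_n$ and $R^{(I)}\to Z_n(X)$. The resulting map $G^{(I)}\to X$ is still surjective in each degree and on cycles, hence a $\CE$-deflation.
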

\begin{proof}
    Consider two morphisms $g^{\bullet}:B^{\bullet}\to C^{\bullet}$ and
    $f^{\bullet}:A^{\bullet}\to B^{\bullet}$. If $g^{\bullet}f^{\bullet}:
    A^{\bullet}\twoheadrightarrow C^{\bullet}$ is a deflation,
    then there exists a conflation $$0\to K^{\bullet}\to A^{\bullet}\to C^{\bullet}\to 0,$$
    which is a short exact sequence such that taking each cycle is still exact.
    Note that $A_n\twoheadrightarrow C_n$ and $Z_n(A^{\bullet})\twoheadrightarrow Z_n(C^{\bullet})$
    factor through $B_n\to C_n$ and $Z_n(B^{\bullet})\to Z_n(C^{\bullet})$, respectively,
    so they are both epimorphisms.
    Then it is clear that {\rm (Ef0)} holds by \cite[Proposition 7.6]{B10}.

    Now let $\lambda$ be an ordinal number, and
    $(X_{\alpha}^{\bullet},f_{\beta\alpha})_{\alpha<\beta<\lambda}$ be a directed system
    indexed by $\lambda$:
    $$\xymatrix{
        X^{\bullet}_0\, \ar@{>->}[r]^{f_{10}} &
        X^{\bullet}_1\, \ar@{>->}[r]^{f_{21}} &
        X^{\bullet}_2\, \ar@{>->}[r]^{f_{32}} &
        \cdots \ar[r] &
        X^{\bullet}_{\omega}\, \ar@{>->}[r]^{f_{\omega+1,\omega}} &
        X^{\bullet}_{\omega+1}\, \ar@{>->}[r] &
        \cdots
    }$$
    such that $f_{\alpha+1,\alpha}$ is an inflation for each
    $\alpha+1<\lambda$ and $X^{\bullet}_{\mu}=
    \lim\limits_{\substack{\longrightarrow \\ \alpha<\mu}}X_{\alpha}^{\bullet}$ for each
    limit ordinal $\mu<\lambda$.
    We use transfinite induction on the ordinal number $\lambda$ to show that
    $X^{\bullet}_0\rightarrowtail X^{\bullet}_{\alpha}$ is an inflation for each
    $\alpha<\lambda$.
    For $\alpha=0$, it is clear since $X^{\bullet}_0\stackrel{\rm Id}\to X^{\bullet}_0$
    is an inflation.
    For each $\alpha+1<\lambda$, if $X^{\bullet}_0\rightarrowtail X^{\bullet}_{\alpha}$
    is an inflation, we note
    that $X^{\bullet}_0\to X^{\bullet}_{\alpha+1}=X^{\bullet}_{0}\rightarrowtail
    X^{\bullet}_{\alpha}\stackrel{f_{\alpha+1,\alpha}}\rightarrowtail
    X^{\bullet}_{\alpha+1}$ is an inflation.
    Now let $\alpha<\lambda$ be a limit ordinal number and
    $X^{\bullet}_0\rightarrowtail X^{\bullet}_{\gamma}$ is an inflation for each
    $\gamma<\alpha$. We have the following commutative diagram
    $$\xymatrix{
       X^{\bullet}_0 \ar@{>->}[d]^{f_{00}} \ar@{=}[r] & X^{\bullet}_0
       \ar@{>->}[d]^{f_{10}} \ar@{=}[r] & X^{\bullet}_0 \ar@{>->}[d]^{f_{20}} \ar@{=}[r]
       & \cdots \ar@{=}[r] & X^{\bullet}_0 \ar@{>->}[d]^{f_{\gamma 0}} \ar@{=}[r] &
       X^{\bullet}_0 \ar@{>->}[d]^{f_{\gamma+1,0}} \ar@{=}[r] & \cdots \\
       X^{\bullet}_0 \ar@{>->}[r]^{f_{10}} & X^{\bullet}_1 \ar@{>->}[r]^{f_{21}}
       & X^{\bullet}_2 \ar@{>->}[r]^{f_{32}} & \cdots \ar[r] &
       X^{\bullet}_{\gamma} \ar@{>->}[r]^{f_{\gamma+1,\gamma}} &
       X^{\bullet}_{\gamma+1} \ar@{>->}[r] & \cdots
    }$$ This induces a morphism: $f_{\alpha 0}:=
    \lim\limits_{\substack{\longrightarrow \\ \gamma<\alpha}}f_{\gamma 0}:
    \lim\limits_{\substack{\longrightarrow \\ \gamma<\alpha}}X_{0}^{\bullet}\to
    \lim\limits_{\substack{\longrightarrow \\ \gamma<\alpha}}X_{\gamma}^{\bullet}$.
    Now we prove that $f_{\alpha 0}: X_{0}^{\bullet}\to X_{\alpha}^{\bullet}$ is an inflation.
    In each degree, $(f_{\alpha 0})_{n}=
    (X_0^{\bullet})_n\to (X_{\alpha}^{\bullet})_n$, $(f_{\alpha 0})_{n}$ is
    a colimit of $f_{\gamma 0}$ with $\gamma<\alpha$. It follows that $(f_{\alpha 0})_n$ is an
    injection for each $n\in\mathbb{Z}$. Then we get that $f_{\alpha 0}$ is an injection, and
    it is easy to see that
    $$0\to X_{0}^{\bullet}
    \to\lim\limits_{\substack{\longrightarrow \\ \gamma<\alpha}}X_{\gamma}^{\bullet}
    \to\lim\limits_{\substack{\longrightarrow \\ \gamma<\alpha}}
    (X_{\gamma}^{\bullet}/X_{0}^{\bullet})\to 0$$ is a short exact sequence in $\Ch (R)$.
    Therefore, we have a short exact sequence $0\to (X^{\bullet}_{0})_n \to
    (X^{\bullet}_{\alpha})_n \to (X^{\bullet}_{\alpha}/X_{0}^{\bullet})_n \to 0$ for each $n$.
    Now, since each $f_{\gamma+1,\gamma}$ is an inflation for each $\gamma<\alpha$,
    then $$Z_n(X^{\bullet}_0)\rightarrowtail Z_n(X^{\bullet}_1)\rightarrowtail
    Z_n(X^{\bullet}_2)\rightarrowtail \cdots \to Z_n(X_{\gamma}^{\bullet})\rightarrowtail
    Z_n(X^{\bullet}_{\gamma+1})\rightarrowtail\cdots$$ forms a directed system indexed by
    $\alpha$ in $\Mod R$ with each $Z_n(f_{\gamma+1,\gamma})$ injective.
    Using a similar approach, we obtain a short exact sequence in $\Mod R$:
    $$0\to Z_n(X_0^{\bullet})\to\lim\limits_{\substack{\longrightarrow \\ \gamma<\alpha}}
    Z_n(X_{\gamma}^{\bullet})\to\lim\limits_{\substack{\longrightarrow \\ \gamma<\alpha}}
    Z_n(X_{\gamma}^{\bullet})/Z_n(X_{0}^{\bullet})\to 0$$
    for each $n\in\mathbb{Z}$.
    Since $\lim\limits_{\substack{\longrightarrow \\ \gamma<\alpha}}Z_n(X_{\gamma}^{\bullet})
    \cong Z_n(\lim\limits_{\substack{\longrightarrow \\ \gamma<\alpha}}X_{\gamma}^{\bullet})=
    Z_n(X_{\alpha}^{\bullet})$ and direct limit functors are exact, then we
    get the desired short exact sequence
    $0\to Z_n(X_0^{\bullet})\to Z_n(X_{\alpha}^{\bullet})\to
    Z_n(X_{\alpha}^{\bullet}/X_0^{\bullet})\to 0$.
    Therefore, {\rm (Ef1)} holds.

    Since $\Mod R$ is a Grothendieck category, the category $\Ch (R)$, with
    the abelian exact structure, is also a Grothendieck category and hence an
    exact category. We denote it by $(\Ch (R), \mathcal{E})$.
    By \cite[Proposition 3.13]{S13}, the category $(\Ch(R),\mathcal{E})$ is
    efficient, then {\rm (Ef2)} holds in $(\Ch (R),\mathcal{E})$. So, for any
    complex $X^{\bullet}$, $X^{\bullet}$ is small relative to the class of all
    inflations in $\mathcal{E}$. Note that $(\Ch(R),\text{C-E})$ is an exact
    subcategory of $(\Ch (R), \mathcal{E})$, so $X^{\bullet}$ is also small
    relative to the class of all inflations in $(\Ch(R),\text{C-E})$ and
    ({\rm Ef2}) holds.

    For {\rm (Ef3)}, let
    $G:=\oplus_{n\in\mathbb{Z}} D^n(R)\bigoplus \oplus_{n\in\mathbb{Z}}S^n(R)$ and
    $X^{\bullet}$ be any complex in $\Ch(R)$, there are families of index sets
    $\{I_n\mid n\in\mathbb{Z}\}$ and $\{J_n\mid n\in\mathbb{Z}\}$ such that
    $R^{(I_n)}\twoheadrightarrow X^{\bullet}_n$ and
    $R^{(J_n)}\twoheadrightarrow Z_n(X^{\bullet})$ are epimorphisms for any
    $n\in\mathbb{Z}$. Set $I:=\sup\{|I_1|,|J_1|,|I_2|,|J_2|,\cdots\}$, where
    $|J|$ is the cardinality of the set $J$ and construct a morphism $G^{(I)}\stackrel{f}\to X^{\bullet}$. By \cite[Proposition 9.1.4]{EJ11-2}, $f$ is a deflation.  Then {\rm (Ef3)} holds.
\end{proof}

\begin{remark}\label{rmk-4}
    The proof of (Ef3) in Proposition \ref{prop-1} showed that $(\Ch(R),\text{C-E})$ has enough projectives. Using a similar argument, one obtains that $(\Ch(R),\text{C-E})$ has enough projectives and injectives.
\end{remark}

\begin{theorem}\label{thm-1}
    Over any left virtually Gorenstein ring $R$, the pair
    $$(\CE({\rm GProj}),\CE({\rm GInj}))$$ is a $\CE$-admissible balanced pair.
\end{theorem}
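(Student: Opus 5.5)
The plan is to apply Proposition \ref{prop-2} to the exact category $(\Ch(R),\CE)$. By Remark \ref{rmk-4} it has enough projectives and injectives, so the task is to exhibit a complete hereditary cotorsion triple
$$(\CE({\rm GProj}),\ \CE(\mathcal W),\ \CE({\rm GInj})),\qquad \mathcal W:={\rm GProj}^{\perp}={}^{\perp}{\rm GInj}.$$
Here $\mathcal W$ is the middle class supplied by the left virtually Gorenstein hypothesis. On the module side, $({\rm GProj},\mathcal W)$ is a hereditary cotorsion pair by \cite[Theorem 3.3]{CS25}. Also $(\mathcal W,{\rm GInj})$ is a complete hereditary cotorsion pair by \cite[Theorem 5.6]{SS20}. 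Since $\Mod R$ has enough projectives and injectives, completeness of $({\rm GProj},\mathcal W)$ should follow from completeness of $(\mathcal W,{\rm GInj})$: we get one-sided completeness of the first pair from the second together with the equality, and \cite[2.1]{SWZ25} gives the other side. If this shortcut fails, I would instead use that ${\rm GProj}$ is known to be special precovering over virtually Gorenstein rings. So we have a complete hereditary cotorsion triple $({\rm GProj},\mathcal W,{\rm GInj})$ in $\Mod R$.

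The key step is a lifting lemma. If $(\mathcal A,\mathcal B)$ is a complete hereditary cotorsion pair in $\Mod R$, then $(\CE(\mathcal A),\CE(\mathcal B))$ is a complete hereditary cotorsion pair in $(\Ch(R),\CE)$. I would prove it as follows.

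\emph{Step 1.} Use the structure of $\CE$-Ext. A $\CE$-exact sequence $0\to Y\to E\to X\to 0$ is determined by compatible extensions of the pieces $B_n$, $H_n$, $Z_n$, $C_n$ and the degrees. Following Enochs' description, I would try to identify $\Ext^1_{\CE}(X,Y)$ with a computation involving $\Ext^1_R(B_n(X),B_n(Y))$, $\Ext^1_R(H_n(X),H_n(Y))$, and so on, or at least show vanishing whenever all the relevant module Ext's vanish. The practical route is to filter $X$ through the $\CE$-exact sequences
$$0\to Z_n\to X_n\to B_{n-1}\to 0,\qquad 0\to B_n\to Z_n\to H_n\to 0.$$
Then reduce $\Ext^1_{\CE}(X,Y)$ to $\Ext^1_{\CE}$ between complexes of the form $D^n(M)$ and $S^n(M)$, where adjunction-type isomorphisms give $\Ext^1_R$ of modules. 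This yields $\Ext^1_{\CE}(\CE(\mathcal A),\CE(\mathcal B))=0$.

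\emph{Step 2.} Obtain completeness. Given $X$, take special $\mathcal A$-precovers of each $B_n(X)$ and $H_n(X)$, assemble them into $\bigoplus D^n(\cdot)\oplus\bigoplus S^n(\cdot)$ type complexes, and use a horseshoe-style construction, in the spirit of the construction of C-E projective resolutions, to get a $\CE$-exact $0\to K\to A\to X\to 0$. Here $A\in\CE(\mathcal A)$, and $K$ has all pieces in $\mathcal B$ because $\mathcal B$ is closed under extensions. Alternatively, since $(\Ch(R),\CE)$ is efficient by Proposition \ref{prop-1}, one could cogenerate by a set and apply the \v{S}\v{t}ov\'{\i}\v{c}ek version of Eklof--Trlifaj; but the explicit construction avoids set issues, since the triple is only assumed complete, not known to be cogenerated by a set.

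\emph{Step 3.} Show orthogonality closes up, i.e.\ ${}^{\perp}\CE(\mathcal B)=\CE(\mathcal A)$. Use completeness: if $X\in{}^\perp\CE(\mathcal B)$, the special precover sequence splits, so $X$ is a summand of an object of $\CE(\mathcal A)$, and $\mathcal A$ is closed under summands. The dual inclusion is symmetric.

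\emph{Step 4.} Heredity follows because a $\CE$-exact sequence is exact on all five functors, and $\mathcal A$ is closed under kernels of epimorphisms.

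Applying the lemma to both pairs of the module triple gives the desired triple in $\Ch(R)$, and Proposition \ref{prop-2} finishes the proof. I expect the main obstacle to be Steps 1--2, namely the honest computation of $\Ext^1_{\CE}$ and the construction of $\CE$-special approximations with all five families of modules simultaneously in $\mathcal A$ or $\mathcal B$. The subtle point is ensuring that the constructed $A$ has $C_n(A)$ and $X_n(A)$ in $\mathcal A$. Splitting $A$ as a sum of disks and spheres handles this, since for such complexes each of $A_n$, $Z_n$, $B_n$, $C_n$, $H_n$ is a finite direct sum of the chosen modules.
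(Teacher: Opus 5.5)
\textbf{The gap is in Step~2.} Your architecture matches the paper's: build the complete hereditary triple $(\CE({\rm GProj}),\CE(\mathcal W),\CE({\rm GInj}))$ in $(\Ch(R),\CE)$ and apply Proposition~\ref{prop-2}. The conclusion of your Step~1 is also correct; it can be checked directly by building a chain-map section degree by degree, with every obstruction lying in some $\Ext^1_R$ from an $\mathcal A$-module to a $\mathcal B$-module. What fails is Step~2, the completeness of the lifted pairs.

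A chain map from a disk on $A'$ to $X$ is just a map from $A'$ to one term $X_m$. To realize a prescribed map from $A'$ to the boundaries, you must lift it through $X_m\twoheadrightarrow d(X_m)$, whose kernel is a cycle module. A chain map $S^n(A'')\to X$ is a map $A''\to Z_n(X)$, and realizing a prescribed $A''\to H_n(X)$ means lifting through $Z_n(X)\twoheadrightarrow H_n(X)$. The obstructions lie in $\Ext^1_R(A',\text{cycles of }X)$ and $\Ext^1_R(A'',\text{boundaries of }X)$. These vanish when $A',A''$ are projective, which is why $\CE$ projective precovers are sums of disks and spheres. They need not vanish when $A',A''$ are only Gorenstein projective, since $X$ is arbitrary.

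This is not a technicality: a special $\CE({\rm GProj})$-precover need not exist in disk-and-sphere form at all. Take $R=k[[x,y]]/(xy)$, which is Gorenstein and hence left virtually Gorenstein by Remark~\ref{rmk-3}. Let $X\colon 0\to xR\hookrightarrow R\to 0$.
\begin{itemize}
\item Since $xR\cong R/(y)$ and $R/xR\cong R/(x)$ are maximal Cohen--Macaulay, every term, cycle, boundary, cokernel and homology module of $X$ is Gorenstein projective, so $X\in\CE({\rm GProj})$.
\item By Step~1, any special precover $0\to K\to A\to X\to 0$ with $K\in\CE(\mathcal W)$ splits, so $X$ is a direct summand of $A$.
\item If $A$ were a sum of disks and spheres, its sequences boundaries $\to$ cycles $\to$ homology would split in every degree. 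Hence so would their retract $0\to xR\to R\to R/xR\to 0$, which is impossible because $R$ is local.
\end{itemize}
You could instead glue non-split extensions, via a horseshoe lemma for special precovers. But then you would have to construct the differentials and verify $\CE$-exactness of the kernel, which your sketch does not do. Your closing justification, ``splitting $A$ as a sum of disks and spheres'', would then no longer apply.

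\textbf{The fix is the route you set aside.} Both module pairs are cogenerated by sets: $({\rm GProj},\mathcal W)$ by \cite[Theorem 4.9 and Example 4.10]{SS20}, and $(\mathcal W,{\rm GInj})$ by \cite[Theorem 5.6]{SS20}. Then \cite[Theorem 9.4]{E11} gives hereditary cotorsion pairs $(\CE({\rm GProj}),\CE(\mathcal W))$ and $(\CE(\mathcal W),\CE({\rm GInj}))$ cogenerated by sets. These are complete by efficiency of $(\Ch(R),\CE)$ (Proposition~\ref{prop-1}) together with \cite[Proposition 5.8]{S13}. This is the paper's proof, and it is why Proposition~\ref{prop-1} is there. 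Alternatively, you can quote the lifting of complete hereditary pairs from \cite[Theorem 3.9]{YL14-1}, as the paper does in Proposition~\ref{prop-10}.

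Set-cogeneration is also what gives completeness of $({\rm GProj},\mathcal W)$ in $\Mod R$. Your first ``shortcut'' does not work: completeness of $(\mathcal W,{\rm GInj})$ only supplies special $\mathcal W$-precovers and special ${\rm GInj}$-preenvelopes. Neither of these produces a special ${\rm GProj}$-precover or a special $\mathcal W$-preenvelope, so your fallback citation has to carry that step.
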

\begin{proof}
    Note that $({\rm GProj},\mathcal{W},{\rm GInj})$ is a hereditary cotorsion triple in
    $\Mod R$ where $\mathcal{W}={\rm GProj}^{\bot}={^{\bot}{\rm GInj}}$ by
    \cite[Proposition 3.16]{WE25}. Note that $({\rm GProj},\mathcal{W})$ is cogenerated by a
    set by \cite[Theorem 4.9 and Example 4.10]{SS20} and $(\mathcal{W},{\rm GInj})$ is
    cogenerated by a set by \cite[Theorem 5.6]{SS20}. It follows that
    $(\text{C-E}({\rm GProj}),\text{C-E}(\mathcal{W}))$ and $(\text{C-E}(\mathcal{W}),\text{C-E}({\rm GInj}))$ are hereditary cotorsion pairs cogenerated by sets, respectively, by
    \cite[Theorem 9.4]{E11}.
    Since $(\Ch(R),\text{C-E})$ is efficient by Proposition \ref{prop-1},
    they are complete by \cite[Proposition 5.8]{S13}. Therefore
    $(\text{C-E}({\rm GProj}),\text{C-E}(\mathcal{W}),\text{C-E}({\rm GInj}))$ is a
    complete hereditary cotorsion triple in $(\Ch (R), \text{C-E})$, and the statement
    follows from Proposition \ref{prop-2}.
\end{proof}

\begin{remark}\label{rmk-3}
    Yang and Liang \cite[Theorem 2.22]{YL14} showed that if $R$ is a ring of finite global
    Gorenstein projective dimension, then $(\text{C-E}({\rm GProj}),\text{C-E}({\rm GInj}))$
    is a C-E-admissible balanced pair. Theorem \ref{thm-1} extends this result. Indeed, note that rings
    of finite Gorenstein weak global dimension are left virtually Gorenstein by
    \cite[Theorem A]{DLW23} and Gorenstein weak global dimension is always less than or equal
    to Gorenstein global dimension over any ring (see \cite[Theorem 3.7]{WYSZ23}). It follows
    that a ring of finite Gorenstein global dimension is left virtually Gorenstein. In
    addition, there are examples of rings which are left virtually Gorenstein of infinite
    Gorenstein global dimension (see \cite[Example 2.15]{DLW23}). More precisely, suppose that
    $(R,\mathfrak{m})$ is a commutative noetherian local ring with $\mathfrak{m}^2=0$ and $R$
    is not Gorenstein. For example, $k[x,y]/(x^2,xy,y^2)$ is such a ring, where $k$ is a
    field. Then $R$ has infinite Gorenstein weak global dimension and is left virtually
    Gorenstein. Using \cite[Theorem 3.7]{WYSZ23} again, the ring $R$ has infinite Gorenstein
    global dimension.
\end{remark}

Suppose that $(\mathcal{C},\mathcal{E})$ is an exact category with enough projectives and
injectives and $(\mathcal{F},\mathcal{L})$ is an $\mathcal{E}$-admissible balanced pair. Then,
we can compute relative derived functors of $\Hom(-,-)$ using either of the two
resolutions. Using this we can get the following result, whose proof is similar to
\cite[Proposition 2.2]{C10} and \cite[Lemma 3.1]{EPZ20}.

\begin{lemma}\label{lem-1}
    Let $(\mathcal{F},\mathcal{L})$ be an $\mathcal{E}$-admissible balanced pair. A conflation in $(\mathcal{C},\mathcal{E})$ is $\Hom(F,-)$-exact for any $F\in\mathcal{F}$ if and only if it is $\Hom(-,L)$-exact for any $L\in\mathcal{L}$.
\end{lemma}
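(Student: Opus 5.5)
Let $\xi: A\rightarrowtail B\twoheadrightarrow C$ be a conflation in $(\mathcal{C},\mathcal{E})$. We prove that $\Hom(F,-)$-exactness for all $F\in\mathcal{F}$ implies $\Hom(-,L)$-exactness for all $L\in\mathcal{L}$; the converse follows by the dual argument, using (BP2) in place of (BP1). The plan is to compare long exact sequences of the relative derived functors $\Ext^n_{\mathcal{F}}$, computed from $\mathcal{F}$-resolutions in the first variable, and $\Ext^n_{\mathcal{L}}$, computed from $\mathcal{L}$-coresolutions in the second variable. Balance identifies the two: $\Ext^n_{\mathcal{F}}(M,N)\cong\Ext^n_{\mathcal{L}}(M,N)$ naturally in both variables. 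This is proved as in \cite[Proposition 2.2]{C10} by the usual double complex argument with $\Hom(X_\bullet,Y^\bullet)$, where $X_\bullet\to M$ comes from (BP1) and $N\to Y^\bullet$ from (BP2). The rows stay exact because the $\mathcal{F}$-resolution is $\Hom(-,L)$-exact, and the columns stay exact because the $\mathcal{L}$-coresolution is $\Hom(F,-)$-exact. I take this identification as the main available tool.

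\textbf{Step 1.} Suppose $\xi$ is $\Hom(F,-)$-exact. Choose $\mathcal{E}$-exact $\mathcal{F}$-resolutions of $A$ and $C$ as in (BP1). Because $0\to\Hom(F,A)\to\Hom(F,B)\to\Hom(F,C)\to 0$ is exact, a horseshoe-type construction produces an $\mathcal{F}$-resolution of $B$ fitting into a degreewise split sequence of resolutions over $\xi$. Concretely, lift $X^C_0\to C$ through $B$, which is possible by the $\Hom(F,-)$-exactness of $\xi$, and form $X_0^A\oplus X_0^C\to B$.

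Two things must be checked. First, $X_0^A\oplus X_0^C\to B$ must be a deflation. I would obtain this by a pullback/$3\times3$ argument in the exact category; the lemma implicitly requires weak idempotent completeness here, which holds for $(\Ch(R),\CE)$. Second, the kernels must again form a $\Hom(F,-)$-exact conflation, which follows from the snake lemma applied to the $\Hom(F,-)$-images.

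Applying $\Hom(-,N)$ to this degreewise split sequence gives a short exact sequence of complexes. Taking homology yields a long exact sequence
\[
0\to\Ext^0_{\mathcal{F}}(C,N)\to\Ext^0_{\mathcal{F}}(B,N)\to\Ext^0_{\mathcal{F}}(A,N)\to\Ext^1_{\mathcal{F}}(C,N)\to\cdots .
\]

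\textbf{Step 2.} Take $N=L\in\mathcal{L}$. By balance, $\Ext^n_{\mathcal{F}}(-,L)\cong\Ext^n_{\mathcal{L}}(-,L)$. Computing the right-hand side with the trivial coresolution $L\to L\to 0$, which is $\Hom(F,-)$-exact, shows that $\Ext^n_{\mathcal{L}}(M,L)=0$ for $n\geq1$ and $\Ext^0_{\mathcal{L}}(M,L)=\Hom(M,L)$. Hence $\Ext^1_{\mathcal{F}}(C,L)=0$.

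Next, I need to identify $\Ext^0_{\mathcal{F}}(M,L)$ with $\Hom(M,L)$ in a way compatible with the maps in the long exact sequence. This holds because (BP1) makes the $\mathcal{F}$-resolution $\Hom(-,L)$-exact. The long exact sequence therefore gives that
\[
0\to\Hom(C,L)\to\Hom(B,L)\to\Hom(A,L)\to 0
\]
is exact, so $\xi$ is $\Hom(-,L)$-exact.

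\textbf{Main obstacle.} The delicate point is naturality. The connecting maps and the isomorphism $\Ext^0_{\mathcal{F}}(-,L)\cong\Hom(-,L)$ must be compatible with the maps induced by $\xi$. One must also verify that the horseshoe construction really produces $\mathcal{E}$-exact $\mathcal{F}$-resolutions, that is, that its differentials are deflations.

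If this becomes messy, a more direct route is available that avoids the horseshoe. Take the (BP1) conflation $K\rightarrowtail X\twoheadrightarrow C$, which is $\Hom(-,L)$-exact. Lift $X\to C$ to a map $X\to B$ using the $\Hom(F,-)$-exactness of $\xi$, and take a morphism $g\colon A\to L$. Then extend $g$ along $A\to B$ by the following steps:
\begin{itemize}
\item form the induced map $K\to A$ and compose it with $g$;
\item extend this composite over $X$ using the $\Hom(-,L)$-exactness of $K\rightarrowtail X$;
\item correct by a diagram chase so that the extension factors through $B$.
\end{itemize}
This diagram chase is the computation I would actually try first.
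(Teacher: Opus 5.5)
Your proposal is correct and is essentially the argument the paper defers to (\cite[Proposition 2.2]{C10}, \cite[Lemma 3.1]{EPZ20}): a relative horseshoe over the $\Hom(\mathcal{F},-)$-exact conflation, the long exact sequence for $\Hom(-,L)$, and the vanishing of $H^1(\Hom(X^C_\bullet,L))$, which in fact follows directly from (BP1) for $C$ without going through the balance isomorphism. Your caveat about weak idempotent completeness is unnecessary: in any exact category, a morphism of conflations whose outer components are deflations has a deflation as its middle component (pull back along $X^C_0\twoheadrightarrow C$ and use B\"uhler's Noether isomorphism), so the lemma holds in the generality stated.
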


\begin{proposition}\label{prop-7}
    Suppose that $(\mathcal{C},\mathcal{E})$ is a WIC exact category with enough
    projectives and injectives. Let $\mathcal{F}$ and $\mathcal{L}$ be classes of objects in
    $\mathcal{C}$ which are closed under direct summands such that

    {\rm (1)} $\mathcal{F}$ is left complete and closed under extensions and kernels of deflations, and $\mathcal{L}$ is right complete and closed under extensions and cokernels of inflations.

    {\rm (2)} $\mathcal{F}\cap\mathcal{F}^{\bot}\subseteq {^{\bot}\mathcal{L}}$, ${^{\bot}\mathcal{L}}\cap\mathcal{L}\subseteq \mathcal{F}^{\bot}$.

    {\rm (3)} $(\mathcal{F},\mathcal{L})$ is an $\mathcal{E}$-admissible balanced pair.

Then there exists a complete hereditary cotorsion triple $(\mathcal{F},\mathcal{G},\mathcal{L})$ in $\mathcal{C}$. In this case, $\mathcal{F}\cap\mathcal{F}^{\bot}={\rm Proj}(\mathcal{C})$ and ${^{\bot}\mathcal{L}}\cap\mathcal{L}={\rm Inj}(\mathcal{C})$, where ${\rm Proj}(\mathcal{C})$ and ${\rm Inj}(\mathcal{C})$ are the classes of $\mathcal{E}$-projective and $\mathcal{E}$-injective objects, respectively.
\end{proposition}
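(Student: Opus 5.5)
The plan is to take $\mathcal{G}:=\mathcal{F}^{\bot}$ and show that $\mathcal{F}^{\bot}={^{\bot}\mathcal{L}}$; everything else then follows formally. Recall that left completeness of the class $\mathcal{F}$ means that every $M$ admits a conflation $K\rightarrowtail F\twoheadrightarrow M$ with $F\in\mathcal{F}$ and $K\in\mathcal{F}^{\bot}$. Right completeness of $\mathcal{L}$ is the dual statement.

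\emph{Step 1: $\mathcal{F}^{\bot}\subseteq{^{\bot}\mathcal{L}}$.} Let $M\in\mathcal{F}^{\bot}$ and choose $K\rightarrowtail F\twoheadrightarrow M$ as above. Since $\mathcal{F}^{\bot}$ is closed under extensions and $K,M\in\mathcal{F}^{\bot}$, we get $F\in\mathcal{F}\cap\mathcal{F}^{\bot}\subseteq{^{\bot}\mathcal{L}}$ by (2). Because $\Ext^1(F',K)=0$ for all $F'\in\mathcal{F}$, the conflation is $\Hom(F',-)$-exact. By Lemma \ref{lem-1} and (3) it is therefore $\Hom(-,L)$-exact for every $L\in\mathcal{L}$. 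The long exact sequence
\[
0\to\Hom(M,L)\to\Hom(F,L)\to\Hom(K,L)\to\Ext^1(M,L)\to\Ext^1(F,L)=0
\]
then gives $\Ext^1(M,L)=0$. The reverse inclusion ${^{\bot}\mathcal{L}}\subseteq\mathcal{F}^{\bot}$ is the exact dual: use a conflation $M\rightarrowtail L\twoheadrightarrow C$ with $L\in\mathcal{L}$ and $C\in{^{\bot}\mathcal{L}}$, the inclusion ${^{\bot}\mathcal{L}}\cap\mathcal{L}\subseteq\mathcal{F}^{\bot}$, and Lemma \ref{lem-1} in the other direction.

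\emph{Step 2: cotorsion pairs, heredity and completeness.} We have $\mathcal{G}=\mathcal{F}^{\bot}$ and, by Step 1, $\mathcal{G}^{\bot}=({^{\bot}\mathcal{L}})^{\bot}\supseteq\mathcal{L}$; the same holds for ${^{\bot}\mathcal{G}}\supseteq\mathcal{F}$. For the reverse inclusions, let $N\in{^{\bot}\mathcal{G}}$ and take $K\rightarrowtail F\twoheadrightarrow N$ with $K\in\mathcal{G}$. This conflation splits, so $N$ is a summand of $F$ and hence lies in $\mathcal{F}$. Dually $\mathcal{G}^{\bot}\subseteq\mathcal{L}$, using right completeness of $\mathcal{L}$ and closure under summands. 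Thus $(\mathcal{F},\mathcal{G})$ and $(\mathcal{G},\mathcal{L})$ are cotorsion pairs. Left completeness of $(\mathcal{F},\mathcal{G})$ is exactly left completeness of $\mathcal{F}$, and right completeness of $(\mathcal{G},\mathcal{L})$ is right completeness of $\mathcal{L}$. By \cite[2.1]{SWZ25} (enough projectives and injectives) both pairs are complete. Heredity of $(\mathcal{F},\mathcal{G})$ comes from $\mathcal{F}$ being closed under kernels of deflations, and heredity of $(\mathcal{G},\mathcal{L})$ from $\mathcal{L}$ being closed under cokernels of inflations. Here \cite[Lemma 6.17]{S13} (WIC with enough projectives and injectives) supplies the other half of each hereditary condition.

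\emph{Step 3: the cores.} Projectives lie in $\mathcal{F}$: a left approximation $K\rightarrowtail F\twoheadrightarrow P$ splits, so $P$ is a summand of $F$. They also lie trivially in $\mathcal{F}^{\bot}$. Conversely, let $X\in\mathcal{F}\cap\mathcal{G}$ and take a deflation $P\twoheadrightarrow X$ with $P$ projective and kernel $K$. Then $K\in\mathcal{F}$, since $\mathcal{F}$ is closed under kernels of deflations. Also $K\in\mathcal{G}$, since $P,X\in\mathcal{G}$ and $\mathcal{G}={^{\bot}\mathcal{L}}$ is closed under kernels of deflations by heredity of $(\mathcal{G},\mathcal{L})$. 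Because $\Ext^1(X,K)=0$ (as $X\in\mathcal{F}$ and $K\in\mathcal{F}^{\bot}$), the conflation splits and $X$ is projective. The statement ${^{\bot}\mathcal{L}}\cap\mathcal{L}={\rm Inj}(\mathcal{C})$ is dual.

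The main obstacle is Step 1. It is the only place where balance enters, via Lemma \ref{lem-1}, and where hypothesis (2) is needed to place the middle term in ${^{\bot}\mathcal{L}}$. The remaining steps are routine cotorsion-pair bookkeeping.
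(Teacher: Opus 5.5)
Your proposal is correct and follows essentially the paper's route. You identify $\mathcal{G}=\mathcal{F}^{\bot}={^{\bot}\mathcal{L}}$ via condition (2) and Lemma \ref{lem-1}, obtain both cotorsion pairs from split special approximations and closure under summands, and compute the cores, supplying details the paper leaves implicit, including the inclusion ${\rm Proj}(\mathcal{C})\subseteq\mathcal{F}\cap\mathcal{F}^{\bot}$. One small correction: a projective $P$ does not lie ``trivially'' in $\mathcal{F}^{\bot}$, since that would need $\Ext^1(F,P)=0$; instead $P\in{^{\bot}\mathcal{L}}=\mathcal{F}^{\bot}$ by your Step 1, which also justifies $P\in\mathcal{G}$ later in Step 3 and, dually, that injectives lie in ${^{\bot}\mathcal{L}}$.
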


\begin{proof}
    Note that $(\mathcal{F},\mathcal{F}^{\bot})$ is a complete hereditary cotorsion pair. Indeed, for any $X\in{^{\bot}(\mathcal{F}^{\bot})}$, there exists a conflation $V\to F\to X$ with $F\in\mathcal{F}$ and $V\in{\mathcal{F}^{\bot}}$, so the conflation is split and then $X\in\mathcal{F}$. Similarly, $({^{\bot}\mathcal{L}},\mathcal{L})$ is also a complete cotorsion pair. For any $H\in\mathcal{F}^{\bot}$, there exists a $\Hom(\mathcal{F},-)$-exact conflation $H_0\to F\to H$ with $F\in\mathcal{F}$ and $H_0\in \mathcal{F}^{\bot}$. Then $F\in\mathcal{F}\cap\mathcal{F}^{\bot}\subseteq {^{\bot}\mathcal{L}}$. By Lemma \ref{lem-1}, we follow that $H\in{^{\bot}\mathcal{L}}$ and then $\mathcal{F}^{\bot}\subseteq {^{\bot}\mathcal{L}}$. Dually, we have ${^{\bot}\mathcal{L}}\subseteq \mathcal{F}^{\bot}$. Since $\mathcal{F}$ is closed under kernels of deflations, it is clear that $\mathcal{F}\cap\mathcal{F}^{\bot}\subseteq{\rm Proj}(\mathcal{C})$.
\end{proof}

Combining Proposition \ref{prop-2} and Proposition \ref{prop-7}, we have the following corollary.

\begin{corollary}\label{coro-2}
    Suppose that $(\mathcal{C},\mathcal{E})$ is a WIC exact category with enough projectives and injectives. If $(\mathcal{F},\mathcal{H})$ and $(\mathcal{G},\mathcal{L})$ are complete hereditary cotorsion pairs in $\mathcal{C}$ with $\mathcal{F}\cap\mathcal{H}\subseteq\mathcal{G}$ and $\mathcal{G}\cap\mathcal{L}\subseteq\mathcal{H}$, then $\mathcal{G}=\mathcal{H}$ if and only if $(\mathcal{F},\mathcal{L})$ is an $\mathcal{E}$-admissible balanced pair in $\mathcal{C}$.
\end{corollary}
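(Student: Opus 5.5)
The plan is to treat the two implications separately. The forward one is immediate from Proposition \ref{prop-2}; the converse goes through Proposition \ref{prop-7}, whose hypotheses we check for $\mathcal{F}$ and $\mathcal{L}$.

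For ($\Leftarrow$), suppose $\mathcal{G}=\mathcal{H}$. Then $(\mathcal{F},\mathcal{G},\mathcal{L})$ is a cotorsion triple, since $(\mathcal{F},\mathcal{H})=(\mathcal{F},\mathcal{G})$ and $(\mathcal{G},\mathcal{L})$ are cotorsion pairs by assumption. Both pairs are complete and hereditary, so Proposition \ref{prop-2} shows that $(\mathcal{F},\mathcal{L})$ is an $\mathcal{E}$-admissible balanced pair. The hypotheses $\mathcal{F}\cap\mathcal{H}\subseteq\mathcal{G}$ and $\mathcal{G}\cap\mathcal{L}\subseteq\mathcal{H}$ are not needed in this direction.

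For ($\Rightarrow$), assume $(\mathcal{F},\mathcal{L})$ is an $\mathcal{E}$-admissible balanced pair and verify the hypotheses of Proposition \ref{prop-7}.
\begin{itemize}
\item \emph{Closure under direct summands.} $\mathcal{F}$ and $\mathcal{L}$ are left and right halves of cotorsion pairs, hence Ext-orthogonal classes, so they are closed under direct summands.
\item \emph{Condition (1).} The class $\mathcal{F}$ is left complete because $(\mathcal{F},\mathcal{H})$ is complete. It is closed under extensions by the long exact Ext sequence. It is closed under kernels of deflations because $(\mathcal{F},\mathcal{H})$ is hereditary. The dual arguments give the corresponding properties of $\mathcal{L}$.
\item \emph{Identifying the perpendicular classes.} Since $(\mathcal{F},\mathcal{H})$ is a cotorsion pair, $\mathcal{F}^{\perp}=\mathcal{H}$. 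Likewise ${}^{\perp}\mathcal{L}=\mathcal{G}$, since $(\mathcal{G},\mathcal{L})$ is a cotorsion pair.
\item \emph{Condition (2).} Under these identifications it reads $\mathcal{F}\cap\mathcal{H}\subseteq\mathcal{G}$ and $\mathcal{G}\cap\mathcal{L}\subseteq\mathcal{H}$, which are exactly the hypotheses.
\item \emph{Condition (3).} This is the assumption itself.
\end{itemize}

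The proof of Proposition \ref{prop-7} shows $\mathcal{F}^{\perp}\subseteq{}^{\perp}\mathcal{L}$ and, dually, ${}^{\perp}\mathcal{L}\subseteq\mathcal{F}^{\perp}$, which here means $\mathcal{H}=\mathcal{G}$. This is the key step, and it is where the balanced-pair property enters via Lemma \ref{lem-1}.

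Let $H\in\mathcal{H}$ and take a special $\mathcal{F}$-precover conflation $H_0\rightarrowtail F\twoheadrightarrow H$ with $H_0\in\mathcal{H}$. Because $H_0\in\mathcal{F}^{\perp}$, this conflation is $\Hom(\mathcal{F},-)$-exact. Because $\mathcal{H}$ is closed under extensions, $F\in\mathcal{F}\cap\mathcal{H}\subseteq\mathcal{G}$. By Lemma \ref{lem-1}, the conflation is also $\Hom(-,L)$-exact for every $L\in\mathcal{L}$. Now apply the long exact sequence for $\Ext(-,L)$. The connecting map $\Hom(H_0,L)\to\Ext^1(H,L)$ is zero, because $\Hom(F,L)\to\Hom(H_0,L)$ is surjective. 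Also $\Ext^1(F,L)=0$, since $F\in\mathcal{G}={}^{\perp}\mathcal{L}$. Together these give $\Ext^1(H,L)=0$, so $H\in\mathcal{G}$.

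The only delicate point is this exactness step. It uses that $\Ext^1$ here is the Yoneda Ext of the exact category, and that $\mathcal{C}$ has enough projectives and injectives so the long exact sequences are available. The reverse inclusion $\mathcal{G}\subseteq\mathcal{H}$ follows by the dual argument, using special $\mathcal{L}$-preenvelopes.
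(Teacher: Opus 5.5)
Your proposal is correct and follows the paper's route. In one direction, $\mathcal{G}=\mathcal{H}$ gives a complete hereditary cotorsion triple, so Proposition \ref{prop-2} applies; in the other, the hypotheses of Proposition \ref{prop-7} hold because $\mathcal{F}^{\perp}=\mathcal{H}$ and ${}^{\perp}\mathcal{L}=\mathcal{G}$, and the argument in its proof (special precovers and preenvelopes together with Lemma \ref{lem-1}) gives $\mathcal{H}=\mathcal{G}$, just as you wrote it out. The only blemish is cosmetic: your labels ($\Leftarrow$) and ($\Rightarrow$) are swapped relative to the statement's order ``$\mathcal{G}=\mathcal{H}$ if and only if balanced'', which is inconsistent with your own opening sentence.
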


Now we focus on the exact category $(\Ch(R),\text{C-E})$. By Proposition \ref{prop-1} and Remark \ref{rmk-4}, it is a WIC exact category with enough projectives and injectives.

\begin{proposition}\label{prop-10}
    Suppose that $(\mathcal{F},\mathcal{H})$ and $(\mathcal{G},\mathcal{L})$ are complete hereditary cotorsion pairs in $\Mod R$. If $\mathcal{F}\cap\mathcal{H}\subseteq\mathcal{G}$ and $\mathcal{G}\cap\mathcal{L}\subseteq\mathcal{H}$, then $(\mathcal{F},\mathcal{L})$ is an admissible balanced pair in $\Mod R$ if and only if $(\CE(\mathcal{F}),\CE(\mathcal{L}))$ is a C-E-admissible balanced pair in $\Ch(R)$.
\end{proposition}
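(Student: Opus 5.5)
The plan is to reduce both sides to the equality $\mathcal{G}=\mathcal{H}$ by applying Corollary \ref{coro-2} twice: once in $\Mod R$ and once in $(\Ch(R),\CE)$. In $\Mod R$, which is abelian with enough projectives and injectives, Corollary \ref{coro-2} applies directly to the pairs $(\mathcal{F},\mathcal{H})$ and $(\mathcal{G},\mathcal{L})$. It gives that $(\mathcal{F},\mathcal{L})$ is an admissible balanced pair if and only if $\mathcal{G}=\mathcal{H}$. It therefore suffices to show that $(\CE(\mathcal{F}),\CE(\mathcal{L}))$ is a C-E-admissible balanced pair if and only if $\mathcal{G}=\mathcal{H}$.

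\textbf{Step 1: lift the cotorsion pairs.} I would show that $(\CE(\mathcal{F}),\CE(\mathcal{H}))$ and $(\CE(\mathcal{G}),\CE(\mathcal{L}))$ are complete hereditary cotorsion pairs in $(\Ch(R),\CE)$.
\begin{itemize}
\item That they are hereditary cotorsion pairs follows from \cite[Theorem 9.4]{E11}, as in the proof of Theorem \ref{thm-1}.
\item For completeness we cannot assume the pairs are cogenerated by sets. Instead I would build approximations directly from module ones. By \cite[Proposition 3.3]{E11}, a complex $X$ is determined up to C-E equivalence by the short exact sequences $0\to B_n\to Z_n\to H_n\to0$ and $0\to Z_n\to X_n\to B_{n-1}\to 0$.
\item Take special $\mathcal{F}$-precovers of $H_n(X)$ and $B_n(X)$. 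Use the horseshoe lemma to assemble from them special precovers of $Z_n$ and then $X_n$.
\item The result is a C-E-exact sequence $0\to K\to F\to X\to 0$ with $F\in\CE(\mathcal{F})$ and all of $K_n,Z_n(K),B_n(K),H_n(K)\in\mathcal{H}$. Since $\mathcal{H}$ is closed under extensions and cokernels of monomorphisms, $C_n(K)\in\mathcal{H}$ as well, so $K\in\CE(\mathcal{H})$.
\item This is the Cartan--Eilenberg resolution construction of Verdier and Enochs, carried out with special precovers instead of projectives. Dually one gets the preenvelopes, and completeness in one direction suffices by \cite[2.1]{SWZ25}.
\end{itemize}
I expect this step to be the main technical obstacle. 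One has to check that the assembled sequence is genuinely C-E exact, not merely degreewise exact, and that the horseshoe gluing preserves membership in $\mathcal{F}$ and $\mathcal{H}$ at every level.

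\textbf{Step 2: transfer the intersection hypotheses.} If $X\in\CE(\mathcal{F})\cap\CE(\mathcal{H})$, then every one of $X_n,Z_n,B_n,C_n,H_n$ lies in $\mathcal{F}\cap\mathcal{H}\subseteq\mathcal{G}$. Hence $\CE(\mathcal{F})\cap\CE(\mathcal{H})\subseteq\CE(\mathcal{G})$, and the inclusion $\CE(\mathcal{G})\cap\CE(\mathcal{L})\subseteq\CE(\mathcal{H})$ follows in the same way. By Proposition \ref{prop-1} and Remark \ref{rmk-4}, $(\Ch(R),\CE)$ is WIC with enough projectives and injectives. Corollary \ref{coro-2} then shows that $(\CE(\mathcal{F}),\CE(\mathcal{L}))$ is a C-E-admissible balanced pair if and only if $\CE(\mathcal{G})=\CE(\mathcal{H})$.

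\textbf{Step 3: compare $\mathcal{G}=\mathcal{H}$ with $\CE(\mathcal{G})=\CE(\mathcal{H})$.}
\begin{itemize}
\item If $\mathcal{G}=\mathcal{H}$, then clearly $\CE(\mathcal{G})=\CE(\mathcal{H})$.
\item Conversely, suppose $\CE(\mathcal{G})=\CE(\mathcal{H})$ and let $M\in\mathcal{G}$. The stalk complex $S^0(M)$ has all its terms, cycles, boundaries, cokernels and homology in $\{0,M\}\subseteq\mathcal{G}$, since $0\in\mathcal{G}$ for any cotorsion pair. So $S^0(M)\in\CE(\mathcal{G})=\CE(\mathcal{H})$, and hence $M\in\mathcal{H}$. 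The reverse inclusion is symmetric.
\end{itemize}
This gives $\mathcal{G}=\mathcal{H}$, which by the opening reduction completes the equivalence.
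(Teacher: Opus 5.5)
Your overall architecture is the paper's. You reduce both sides to $\mathcal G=\mathcal H$, transfer the intersection hypotheses to $\CE(-)$ termwise, apply Corollary~\ref{coro-2} in $(\Ch(R),\CE)$, and recover $\mathcal G=\mathcal H$ from $\CE(\mathcal G)=\CE(\mathcal H)$ via stalk complexes. Your use of Corollary~\ref{coro-2} in $\Mod R$ is the same as the paper's appeal to \cite[Corollary 4.8]{EPZ20}. The only divergence is Step~1. The paper does not reprove the lifting; it cites \cite[Theorem 3.9]{YL14-1}, which says that complete hereditary cotorsion pairs in $\Mod R$ lift to complete hereditary cotorsion pairs $(\CE(\mathcal F),\CE(\mathcal H))$ in $(\Ch(R),\CE)$. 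Your replacement argument for this has a genuine gap as written.

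\textbf{The horseshoe step fails as stated.} The horseshoe lemma needs the object mapping onto the right-hand term to be projective, so that it lifts through the epimorphism. To glue $F^H_n\to H_n(X)$ and $F^B_{n+1}\to B_{n+1}(X)$ along $0\to B_{n+1}(X)\to Z_n(X)\to H_n(X)\to 0$, you must lift $F^H_n\to H_n(X)$ to $Z_n(X)$. The obstruction lies in $\Ext^1(F^H_n,B_{n+1}(X))$, which need not vanish because $B_{n+1}(X)$ is arbitrary. For instance, if $H_n(X)\in\mathcal F$ and you take the identity as its precover, a lift would split the sequence. So the real difficulty is the existence of the gluing. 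The C-E exactness and class membership you flag are automatic once the gluing exists.

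\textbf{The missing idea is to use heredity.} Let $K^B_{n+1},K^H_n\in\mathcal H$ be the kernels of the special precovers. Pulling back along $F^H_n\to H_n(X)$ gives $0\to B_{n+1}(X)\to P\to F^H_n\to 0$. Its class lifts along $\Ext^1(F^H_n,F^B_{n+1})\to\Ext^1(F^H_n,B_{n+1}(X))$, which is onto because $\Ext^2(F^H_n,K^B_{n+1})=0$. This yields a (usually non-split) extension $0\to F^B_{n+1}\to F^Z_n\to F^H_n\to 0$ mapping onto $Z_n(X)$, with kernel an extension of $K^B_{n+1}$ and $K^H_n$, hence in $\mathcal H$. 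Do the same for $0\to Z_n(X)\to X_n\to B_n(X)\to 0$, and take as differential $F_n\twoheadrightarrow F^B_n\rightarrowtail F^Z_{n-1}\rightarrowtail F_{n-1}$. This gives $F\in\CE(\mathcal F)$ and a chain map onto $X$, surjective on cycles, with kernel in $\CE(\mathcal H)$.

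\textbf{The cotorsion-pair property also needs separate justification.} The paper invokes \cite[Theorem 9.4]{E11} in Theorem~\ref{thm-1} only for pairs cogenerated by sets, which you say you cannot assume here. You can instead check $\Ext^1_{\CE}(\CE(\mathcal F),\CE(\mathcal H))=0$ by hand. Split a C-E conflation $Y\rightarrowtail E\twoheadrightarrow X$ on boundaries. Extend the splitting to cycles using $\Ext^1(H_n(X),Z_n(Y))=0$. Then extend to terms using $\Ext^1(B_n(X),Y_n)=\Ext^1(B_n(X),Z_n(Y))=0$. Your approximation sequences plus closure under direct summands then give the cotorsion-pair equalities. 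Alternatively, simply cite \cite[Theorem 3.9]{YL14-1}, as the paper does.
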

\begin{proof}
    If $(\mathcal{F},\mathcal{L})$ is an admissible balanced pair,
    then by \cite[Corollary 4.8]{EPZ20} one can get that $\mathcal{G}=\mathcal{H}$. Hence, the
    triple $(\text{C-E}(\mathcal{F}),\text{C-E}(\mathcal{G}),\text{C-E}(\mathcal{L}))$ is a
    complete hereditary cotorsion triple by \cite[Theorem 3.9]{YL14-1} and the assumption.
    Then $(\text{C-E}(\mathcal{F}),\text{C-E}(\mathcal{L}))$ is a C-E-admissible balanced pair
    by Proposition \ref{prop-2}.
    Conversely, suppose that $(\text{C-E}(\mathcal{F}),\text{C-E}(\mathcal{L}))$ is a
    C-E-admissible balanced pair. Note that
    $(\text{C-E}(\mathcal{F}),\text{C-E}(\mathcal{H}))$ and $(\text{C-E}(\mathcal{G}),\text{C-E}(\mathcal{L}))$
    are complete cotorsion pairs in $\Ch(R)$ by \cite[Theorem 3.9]{YL14-1}.
    Since $\mathcal{F}\cap\mathcal{H}\subseteq\mathcal{G}$ and $\mathcal{G}\cap\mathcal{L}\subseteq\mathcal{H}$,
    we have $\text{C-E}(\mathcal{F})\cap \text{C-E}(\mathcal{H})\subseteq \text{C-E}(\mathcal{G})$
    and $\text{C-E}(\mathcal{G})\cap \text{C-E}(\mathcal{L})\subseteq \text{C-E}(\mathcal{H})$
    by the definition of Cartan-Eilenberg complexes.
    Then $\text{C-E}(\mathcal{H})=\text{C-E}(\mathcal{G})$ by Corollary \ref{coro-2}.
    It follows that $\mathcal{H}=\mathcal{G}$. Indeed, let $X\in\mathcal{H}$ and view $X$ as a stalk complex in $\Ch(R)$,
    then $X\in \text{C-E}(\mathcal{H})=\text{C-E}(\mathcal{G})$.
    Again, by the definition, we get that $X\in\mathcal{G}$.
    We conclude that $(\mathcal{F},\mathcal{L})$ is an admissible balanced pair in $\Mod R$ again by \cite[Corollary 4.8]{EPZ20}.
\end{proof}

Applying this, we obtain the reverse statement of Theorem \ref{thm-1}.

\begin{theorem}\label{thm-2}
    If the cotorsion pair $({\rm GProj},{\rm GProj}^{\bot})$ is complete, then the following statements are equivalent.

    {\rm (1)} $R$ is a left virtually Gorenstein ring.

    {\rm (2)} The pair $(\CE({\rm GProj}),\CE({\rm GInj}))$ is a $\CE$-admissible balanced pair in the exact category $(\Ch(R),\CE)$.

    {\rm (3)} The pair $({\rm GProj},{\rm GInj})$ is an admissible balanced pair in $\Mod R$.
\end{theorem}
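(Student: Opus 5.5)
We will deduce Theorem \ref{thm-2} from Proposition \ref{prop-10}, applied with $\mathcal{F}={\rm GProj}$, $\mathcal{H}={\rm GProj}^{\bot}$, $\mathcal{G}={^{\bot}{\rm GInj}}$ and $\mathcal{L}={\rm GInj}$. First we check the hypotheses of Proposition \ref{prop-10}.

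The pair $({\rm GProj},{\rm GProj}^{\bot})$ is hereditary by \cite[Theorem 3.3]{CS25}, and it is complete by assumption. The pair $({^{\bot}{\rm GInj}},{\rm GInj})$ is complete by \cite[Theorem 5.6]{SS20}. It is also hereditary, since ${\rm GInj}$ is closed under cokernels of monomorphisms; alternatively, in $\Mod R$, which has enough projectives and injectives, it suffices to know that ${^{\bot}{\rm GInj}}$ is closed under kernels of epimorphisms.

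For the intersection conditions we use the standard fact that ${\rm GProj}\cap{\rm GProj}^{\bot}=\mathcal{P}$. Every projective module $P$ satisfies $\Ext^1(P,-)=0$, so $P\in{^{\bot}{\rm GInj}}$. This gives $\mathcal{F}\cap\mathcal{H}\subseteq\mathcal{G}$. Dually, ${^{\bot}{\rm GInj}}\cap{\rm GInj}=\mathcal{I}$, and every injective module lies in ${\rm GProj}^{\bot}$. This gives $\mathcal{G}\cap\mathcal{L}\subseteq\mathcal{H}$.

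The only nontrivial part is the identification of these intersections. For $G\in{\rm GProj}\cap{\rm GProj}^{\bot}$, take a conflation $0\to G\to P\to G'\to 0$ with $P$ projective and $G'\in{\rm GProj}$, which exists by the definition of Gorenstein projective modules. Since $\Ext^1(G',G)=0$, this sequence splits, so $G$ is projective. The injective case is dual, using the complete resolution defining Gorenstein injective modules.

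With the hypotheses verified, Proposition \ref{prop-10} gives (2)$\Leftrightarrow$(3).

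For (1)$\Rightarrow$(2), we simply invoke Theorem \ref{thm-1}.

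For (3)$\Rightarrow$(1), the proof of Proposition \ref{prop-10} already shows, via \cite[Corollary 4.8]{EPZ20}, that an admissible balanced pair $(\mathcal{F},\mathcal{L})$ in this situation forces $\mathcal{G}=\mathcal{H}$. Here that reads ${\rm GProj}^{\bot}={^{\bot}{\rm GInj}}$, which is exactly the statement that $R$ is left virtually Gorenstein. Alternatively, one can apply Corollary \ref{coro-2} in $\Mod R$ directly.

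So the cycle of implications is (1)$\Rightarrow$(2)$\Leftrightarrow$(3)$\Rightarrow$(1). The main point needing care is confirming the hereditary and completeness hypotheses on both module cotorsion pairs, and this is exactly where the completeness assumption on $({\rm GProj},{\rm GProj}^{\bot})$ is used.
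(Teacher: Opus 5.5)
Your proposal is correct and follows essentially the paper's proof. It gets (1)$\Rightarrow$(2) from Theorem \ref{thm-1}, (2)$\Leftrightarrow$(3) from Proposition \ref{prop-10} with $\mathcal{F}={\rm GProj}$, $\mathcal{H}={\rm GProj}^{\bot}$, $\mathcal{G}={}^{\bot}{\rm GInj}$, $\mathcal{L}={\rm GInj}$, and (3)$\Rightarrow$(1) from \cite[Corollary 4.8]{EPZ20}; the only difference is that you explicitly verify the hypotheses of Proposition \ref{prop-10} (hereditariness, completeness, and the intersection conditions via ${\rm GProj}\cap{\rm GProj}^{\bot}=\mathcal{P}$ and ${}^{\bot}{\rm GInj}\cap{\rm GInj}=\mathcal{I}$), which the paper leaves implicit.
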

\begin{proof}
    Using Theorem \ref{thm-1} and Proposition \ref{prop-10}, we obtain the equivalences of {\rm (2)} and {\rm (3)}. The equivalence of {\rm (1)} and {\rm (3)} follows by \cite[Corollary 4.8]{EPZ20}.
\end{proof}

We give several examples when the cotorsion pair $({\rm GProj},{\rm GProj}^{\bot})$ is
complete.

\begin{corollary}\label{coro-4}
    Suppose that $R$ satisfies one of the following conditions.

    {\rm (1)} The ring $R$ is left n-perfect and right coherent. In particular, this includes
    commutative Noetherian rings of finite Krull dimension;

    {\rm (2)} There exists a supercompact cardinal $\geq {\rm Card}R$;

    {\rm (3)} Vop$\check{e}$nka's principle holds in the sense of \cite{C25}.

    Then the statements in Theorem \ref{thm-2} hold true.
\end{corollary}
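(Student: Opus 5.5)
The plan is to reduce Corollary \ref{coro-4} to Theorem \ref{thm-2}. That theorem's only hypothesis is that the cotorsion pair $({\rm GProj},{\rm GProj}^{\bot})$ is complete. By \cite[Theorem 3.3]{CS25} this pair is always a hereditary cotorsion pair, so in each of the three cases it is enough to show completeness. Since $\Mod R$ has enough projectives and injectives, left and right completeness are equivalent (see \cite[2.1]{SWZ25}). It therefore suffices to produce, for every module $M$, a short exact sequence $0\to W\to G\to M\to 0$ with $G\in{\rm GProj}$ and $W\in{\rm GProj}^{\bot}$.

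For case (1), I would use the standard route via cogeneration by a set. If $R$ is left $n$-perfect and right coherent, then Gorenstein projective modules are exactly the Gorenstein flat modules that are also projectively coresolved (equivalently, the Ding/PGF-type description). In this situation \v{S}aroch--\v{S}\v{t}ov\'{i}\v{c}ek \cite[Theorem 4.9 and Example 4.10]{SS20} show that ${\rm GProj}$ is deconstructible, i.e.\ the pair $({\rm GProj},{\rm GProj}^{\bot})$ is cogenerated by a set. The Eklof--Trlifaj theorem then gives completeness. For the parenthetical claim, a commutative Noetherian ring of Krull dimension $d$ is $d$-perfect, since flat modules have projective dimension at most $d$ by Gruson--Raynaud/Jensen, and it is coherent. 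So the parenthetical reduces to the main statement of (1).

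For cases (2) and (3), I would invoke the set-theoretic results: under Vop\v{e}nka's principle, or in the presence of a supercompact cardinal $\kappa\geq{\rm Card}R$, the class ${\rm GProj}$ is closed under $\kappa$-pure (or directed) constructions in a way that makes it deconstructible. This is the content of \cite{C25} (and the related work quoted there). Deconstructibility again yields that $({\rm GProj},{\rm GProj}^{\bot})$ is cogenerated by a set, and hence complete by Eklof--Trlifaj. With completeness established in every case, Theorem \ref{thm-2} applies directly, and the three equivalent statements hold.

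The main obstacle is not the reduction but checking that the cited results give exactly completeness of $({\rm GProj},{\rm GProj}^{\bot})$. The hypotheses in case (1) in particular have to match those in \cite{SS20}. If the citation only yields "${\rm GProj}$ is special precovering", I would convert this into a complete cotorsion pair by hand: given a special precover $0\to K\to G\to M\to 0$ with $K\in{\rm GProj}^{\bot}$, this is precisely left completeness, and the equivalence of left and right completeness then finishes the argument.
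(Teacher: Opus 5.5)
Your proposal is correct and takes the same approach as the paper. Both reduce to completeness of $({\rm GProj},{\rm GProj}^{\bot})$ and then cite the literature case by case: the paper uses \cite[Proposition 6]{EIO17} for (1), and \cite[Theorem 5.2 and Proposition 5.6]{CS25} together with \cite[Theorem 1.4(B)]{C25} for (2) and (3). Your route for (1) via $\mathcal{PGF}={\rm GProj}$ and \cite[Theorem 4.9]{SS20} is the alternative the paper records right after the corollary, where the identification $\mathcal{PGF}={\rm GProj}$ in case (1) comes from \cite[Theorem 4]{I20}; you assert this identification without a source, so you should cite that.
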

\begin{proof}
    For {\rm (1)}, it follows from \cite[Proposition 6]{EIO17}.

    For {\rm (2)} and {\rm (3)}, it follows from \cite[Theorem 5.2 and Proposition 5.6]{CS25} and \cite[Theorem 1.4(B)]{C25}.
\end{proof}

Let $\mathcal{PGF}$ be the class of all projectively coresolved Gorenstein flat modules (see the definition in \cite[Section 4]{SS20}). Note that ${\rm GProj}\subseteq{\rm GFlat}$ is equivalent to $\mathcal{PGF}={\rm GProj}$ by \cite[Theorem 3]{I20}, and then in this case the cotorsion pair $({\rm GProj},{\rm GProj}^{\bot})$ is cogenerated by a set by \cite[Theorem 4.9]{SS20} and hence it is complete. This also provides an approach to obtaining the complete cotorsion pair $({\rm GProj},{\rm GProj}^{\bot})$. By \cite[Theorem 4]{I20}, one obtains $\mathcal{PGF} = {\rm GProj}$ in case (1) of Corollary \ref{coro-4}.

\section{C-E version of Tate cohomology}
In this section, we study C-E versions of Tate cohomology. For simplicity, we supress the dot in the notation for complexes. We recall from \cite{E11} that a complex
$G\in\Ch(R)$ is said to be {\it $\CE$ Gorenstein projective} if there exists a $\CE$-exact
sequence of complexes
$$ \cdots\to P_{1}\to P_{0}\to P^1\to P^2\to\cdots$$
such that $G=\Im(P_0\to P^1)$ with each $P_i$ C-E projective complex, which remains exact after applying $\Hom(-,P)$ for any arbitrary C-E projective complex $P$.
By \cite[Proposition 2.2]{YL14}, a complex $G$ is C-E Gorenstein projective if and only if $B(G)$ and $H(G)$ are in $\Ch(R\text{-}{\rm GProj})$.

\begin{definition}\cite[Definition 2.12]{YL14}\label{def-3}
    A complex $G$ is said to have {\it $\CE$ Gorenstein projective dimension at most $n$} if there exists a $\CE$-exact sequence $0\to K_n\to P_{n-1}\to \cdots\to P_1\to P_0\to G\to 0$ with each $P_i$ a C-E projective complex and $K_n$ $\CE$ Gorenstein projective complex, denoted by $\text{\rm C-E Gpd}G\leq n$. If $n$ is the least, then we set $\text{\rm C-E Gpd}G=n$ and we set $\text{\rm C-E Gpd}G= \infty$ if there is no such $n$.
\end{definition}
We have the following characterizations of finite C-E Gorenstein projective dimension.

\begin{proposition}\label{prop-3}
    For a complex $G\in\Ch(R)$, the following statements are equivalent.

    {\rm (1)} $\text{\rm C-E Gpd}G\leq n$.

    {\rm (2)} $B(G)$ and $H(G)$ are in $\Ch(R\text{-}{\rm GProj}_n)$.

    {\rm (3)} $C(G)$ and $G$ are in $\Ch(R\text{-}{\rm GProj}_n)$.
\end{proposition}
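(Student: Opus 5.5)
The plan is to reduce everything to statements about modules, degree by degree, using the short exact sequences attached to any complex $G$:
\[
0\to Z_n(G)\to G_n\to B_{n-1}(G)\to 0,\qquad 0\to B_n(G)\to Z_n(G)\to H_n(G)\to 0,
\]
\[
0\to B_{n-1}(G)\to G_{n-1}\to C_{n-1}(G)\to 0,\qquad 0\to H_{n-1}(G)\to C_{n}(G)\to B_{n-2}(G)\to 0 .
\]
I will also use the standard facts that ${\rm GProj}_n$ is closed under extensions. Moreover, in a short exact sequence $0\to A\to B\to C\to 0$ with two terms in ${\rm GProj}_n$, the third lies in ${\rm GProj}_n$ if it is $A$. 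If it is $C$, one only gets ${\rm Gpd}\,C\le n+1$ in general. To handle this asymmetry I will use dimension shifting through the C-E projective resolution instead of pure module-theoretic two-out-of-three arguments.

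\textbf{(1)$\Rightarrow$(2).} Take the C-E exact sequence $0\to K_n\to P_{n-1}\to\cdots\to P_0\to G\to 0$ from Definition \ref{def-3}. Since it is C-E exact, applying $B_m(-)$ and $H_m(-)$ gives exact sequences of modules, by the proposition from \cite[Lemma 5.2]{E11}. The module $B_m(P_i)$ is projective and $H_m(P_i)$ is projective by Proposition \ref{prop-9}. Also $B_m(K_n)$ and $H_m(K_n)$ are Gorenstein projective by \cite[Proposition 2.2]{YL14}. Hence ${\rm Gpd}\,B_m(G)\le n$ and ${\rm Gpd}\,H_m(G)\le n$.

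\textbf{(2)$\Rightarrow$(1).} Choose a C-E projective resolution of $G$ using Remark \ref{rmk-4} and let $K_n$ be the $n$-th syzygy, so the sequence is C-E exact. Applying $B_m$ and $H_m$ gives projective resolutions of length $n$ of $B_m(G)$ and $H_m(G)$ with $n$-th syzygies $B_m(K_n)$ and $H_m(K_n)$. By the standard fact that when ${\rm Gpd}\,M\le n$ the $n$-th syzygy in any projective resolution is Gorenstein projective, these modules are Gorenstein projective. Then $K_n$ is C-E Gorenstein projective by \cite[Proposition 2.2]{YL14}.

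\textbf{(1)$\Leftrightarrow$(3).} The same syzygy argument works once I know that C-E Gorenstein projectivity of $K$ is equivalent to all $K_m$ and $C_m(K)$ being Gorenstein projective.
\begin{itemize}
\item[$\Rightarrow$] The modules $K_m$ and $C_m(K)$ are extensions of modules among $B$, $H$, and ${\rm GProj}$ is closed under extensions.
\item[$\Leftarrow$] From $0\to B_{m-1}\to K_{m-1}\to C_{m-1}\to 0$ with $K_{m-1},C_{m-1}\in{\rm GProj}$, resolving-closure under kernels of epimorphisms gives $B_{m-1}\in{\rm GProj}$. Then $Z_m\in{\rm GProj}$ from $0\to Z_m\to K_m\to B_{m-1}\to0$. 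Finally $H_{m-1}$ is the kernel of $C_m\to B_{m-2}$, an epimorphism between Gorenstein projectives, so $H_{m-1}\in{\rm GProj}$.
\end{itemize}
With this in hand, the proof of (1)$\Leftrightarrow$(3) copies (1)$\Leftrightarrow$(2), applying the exact functors $(-)_m$ and $C_m(-)$ to the C-E exact resolution. Here $(P_i)_m$ and $C_m(P_i)$ are projective.

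I expect the main obstacle to be this characterization of C-E Gorenstein projectives via $G$ and $C(G)$, because the classical two-out-of-three property fails for cokernels. The ordering above handles this by taking only kernels. I also need to check carefully that C-E exactness gives exactness of $C_m(-)$ along the resolution, which is the proposition from \cite[Lemma 5.2]{E11}.
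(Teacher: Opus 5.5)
Your proof is correct, but in the two substantive directions it takes a different route from the paper.

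For (2)$\Rightarrow$(1), the paper builds the resolution by hand. It takes module resolutions of $B_{m+1}(G)$, $B_m(G)$ and $H_m(G)$ and combines them with the Horseshoe lemma into resolutions of $G_m$. It then assembles explicit C-E projective complexes $P^i$ (disks on the resolutions of the boundaries, spheres on those of the homology). Finally it checks, by a diagram chase and an induction that is only sketched, that the resulting sequence is C-E exact with C-E Gorenstein projective kernel.

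You instead start from an arbitrary C-E exact C-E projective resolution (Remark \ref{rmk-4}) and push it through $B_m$ and $H_m$. These functors are exact on conflations and send C-E projectives to projectives. You conclude with Holm's theorem (for ${\rm Gpd}\,M\le n$, the $n$-th syzygy of any projective resolution is Gorenstein projective) together with \cite[Proposition 2.2]{YL14}. This is shorter and avoids the explicit construction and its induction. It also gives the slightly stronger fact that the $n$-th syzygy of every C-E projective resolution of $G$ is C-E Gorenstein projective.

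For statement (3), the paper proves (3)$\Rightarrow$(2) at the module level. It uses the sequences $0\to B\to G\to C\to 0$ and $0\to H\to C\to B\to 0$, citing \cite[Theorem 1.1]{H22} to pass ${\rm Gpd}\le n$ to kernels. You instead prove that a complex $K$ is C-E Gorenstein projective if and only if all $K_m$ and $C_m(K)$ are Gorenstein projective, and then rerun the syzygy argument to get (3)$\Rightarrow$(1) directly. The forward half of your characterization is the input the paper takes from \cite[Proposition 2.2]{YL14} and \cite[Lemma 2.1]{Z13} in (1)$\Rightarrow$(3). Both routes use only the kernel half of two-out-of-three, so your concern about cokernels is handled equally well either way.

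One harmless slip concerns indexing. In the standard indexing you use, your fourth sequence should be $0\to H_n(G)\to C_n(G)\to B_{n-1}(G)\to 0$, and in your ``$\Leftarrow$'' step $H_{m-1}$ is the kernel of $C_{m-1}\to B_{m-2}$. The paper indexes $B_n(X)=\Im(X_n\to X_{n-1})$ and $C_n(X)=\Coker(X_n\to X_{n-1})$. Since everything is quantified over all degrees, this changes nothing.
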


\begin{proof}
    ``{\rm (1)}$\Rightarrow${\rm (2)} and {\rm (3)}'' By definition, there exists a $\CE$-exact sequence $\Sigma$: $0\to K_n\to P_{n-1}\to \cdots\to P_1\to P_0\to G\to 0$ with each $P_i$ a C-E projective complex and $K_n$ C-E Gorenstein projective complex. Thus $B(\Sigma)$, $H(\Sigma)$ and $C(\Sigma)=\Sigma/B(\Sigma)$ are all exact and {\rm (2)} and {\rm (3)} are clear by \cite[Proposition 2.2]{YL14} and \cite[Lemma 2.1]{Z13}.

    ``{\rm (2)}$\Rightarrow${\rm (1)}'' By assumption, we know that ${\rm Gpd}B_n(G)\leq n$ and ${\rm Gpd}H_n(G)\leq n$ for each $n\in\mathbb{Z}$. Since $0\to B_{m+1}(G)\to Z_m(G)\to H_m(G)\to 0$ and $0\to Z_m(G)\to G_m\to B_m(G)\to 0$ are exact, the Horseshoe lemma allows one to construct a partial projective resolution of $G_m$
    $$ 0\to K_n^{G_m}\to P_{n-1}^{B_{m+1}}\oplus P_{n-1}^{B_m}\oplus P_{n-1}^{H_m}\to\cdots\to P_{0}^{B_{m+1}}\oplus P_{0}^{B_m}\oplus P_{0}^{H_m}\to G_m\to 0$$
    with all $P$-terms projective. Let $P^i$ be the complex with each degree $P_{i}^{B_{m+1}}\oplus P_{i}^{B_m}\oplus P_{i}^{H_m}$ and the differential is $\left(\smqty{0&1&0\\0&0&0\\0&0&0}\right)$, then $P^i$ is C-E projective. So we get the desired exact sequence $0\to K \to P^{i-1}\to \cdots\to P^0\to G\to 0$. We focus on the short exact sequence $0\to K^1\to P^0\to G\to 0$. We get the following commutative diagram $$\xymatrix{
       0 \ar[r] & K_1^{G_{m+1}} \ar@/_2pc/[ddd] \ar[r] \ar@{->>}[d] & P_{0}^{B_{m+2}}\oplus P_{0}^{B_{m+1}}\oplus P_{0}^{H_{m+1}} \ar[r] \ar@{->>}[d] & G_{m+1} \ar[r] \ar@{->>}[d] & 0\\
       0 \ar[r] & K_1^{B_{m+1}} \ar[r] \ar@{>->}[d] & P_{0}^{B_{m+1}} \ar[r] \ar@{>->}[d] & B_{m+1}(G) \ar[r] \ar@{>->}[d] & 0 \\
       0 \ar[r] & K_1^{Z_{m}} \ar[r] \ar@{>->}[d] & P_{0}^{B_{m+1}}\oplus P_{0}^{H_{m}} \ar[r] \ar@{>->}[d] & Z_{m}(G) \ar[r] \ar@{>->}[d] & 0\\
       0 \ar[r] & K_1^{G_{m}} \ar[r]  & P_{0}^{B_{m+1}}\oplus P_{0}^{B_{m}}\oplus P_{0}^{H_{m}} \ar[r]  & G_{m} \ar[r]  & 0
    }$$ Note that $\Im (K_1^{G_{m+1}}\to K_1^{G_{m}})=\Im(K_1^{B_{m+1}}\rightarrowtail  K_1^{G_{m}})=K_1^{B_{m+1}}$ and $\Ker(K_1^{G_{m+1}}\to K_1^{G_{m}})=\Ker(K_1^{G_{m}}\twoheadrightarrow K_1^{B_{m}})=K_1^{Z_m}$, it follows that $K^1$ is in $\Ch(R\text{-}{\rm GProj}_1)$ and $0\to K^1\to P^0\to G\to 0$ is $\CE$-exact. One then proves that the desired sequence is $\CE$-exact and $K$ is C-E Gorenstein projective by induction.

    ``{\rm (3)}$\Rightarrow${\rm (2)}'' Note that we have the following exact sequences $0\to B_{n+1}(G)\to G_{n}\to C_{n+1}(G)\to 0$ and $0\to H_n(G)\to G_n/B_{n+1}(G)\to B_{n}(G)\to 0$, then the statement holds by \cite[Theorem 1.1]{H22}.
\end{proof}

\begin{definition}\label{def-1}
    A complex $G$ {\it has a complete $\CE$ projective resolution} if there exists a diagram $T\stackrel{u}\to P\to G$ with $T$ a $\CE$-exact complex of C-E projective complexes that remains exact after applying the functor $\Hom(-,P')$ for any C-E projective complex $P'$, $P\to G$ a $\CE$-exact $\text{C-E}(\mathcal{P})$-resolution, and $u$ is a morphism of complexes of complexes such that $u_n:T_n\to P_n$ is an isomorphism for all $n\gg 0$.
\end{definition}

\begin{proposition}\label{prop-5}
    A complex $G$ has finite $\CE$ Gorenstein projective dimension if and only if $G$ has a complete $\CE$ projective resolution.
\end{proposition}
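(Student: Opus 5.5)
We plan to follow the classical argument of Avramov--Martsinkovsky (and Veliche) for modules, carried out inside the exact category $(\Ch(R),\CE)$. This category is weakly idempotent complete and has enough C-E projectives and injectives by Proposition \ref{prop-1} and Remark \ref{rmk-4}. In it the C-E projective complexes are exactly the projective objects, and the C-E Gorenstein projective complexes are exactly the syzygies of $\Hom(-,\CE(\mathcal P))$-exact, $\CE$-exact complexes of projectives. With this dictionary, the statement becomes the exact-category version of ``finite Gorenstein projective dimension $\iff$ existence of a complete resolution''. The proof should only use conflations, projectives and the Yoneda $\Ext$ of the exact category.

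($\Rightarrow$) Suppose $\text{C-E Gpd}\,G=n<\infty$. Take any $\CE$-exact $\CE(\mathcal P)$-resolution $P\to G$; enough projectives are supplied by Remark \ref{rmk-4}. Let $K_n$ be its $n$-th syzygy. The first step is to show $K_n$ is C-E Gorenstein projective, and we will use Proposition \ref{prop-3} to do so. Applying $B,H,C$ to the conflations of the resolution yields exact sequences of modules, since the resolution is $\CE$-exact. In particular $B_m(K_n)$ and $H_m(K_n)$ are $n$-th syzygies of $B_m(G)$ and $H_m(G)$ in projective resolutions of modules, the relevant components of C-E projectives being projective. Because these modules have Gorenstein projective dimension at most $n$, the classical result (e.g.\ via \cite[Lemma 2.1]{Z13}) makes these syzygies Gorenstein projective. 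Then \cite[Proposition 2.2]{YL14} shows $K_n$ is C-E Gorenstein projective.

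Next, by the definition of C-E Gorenstein projectivity, $K_n$ sits inside a $\CE$-exact, $\Hom(-,\CE(\mathcal P))$-exact complex of C-E projectives. We splice its left half with the truncation $P_{\ge n}$, or rather keep $P_{\geq n}$ and attach the right half of the totally acyclic complex. This produces $T$ with $T_i=P_i$ for $i\ge n$. The comparison $u\colon T\to P$ is the identity in degrees $\ge n$ and is extended to lower degrees by the comparison lemma. The comparison lemma applies because $P\to G$ is a $\CE$-exact resolution and the $T_i$ are projective in the exact category, so maps lift along deflations.

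($\Leftarrow$) Suppose $T\xrightarrow{u}P\to G$ is a complete resolution with $u_i$ an isomorphism for $i\ge n$. Then the syzygies agree: the $n$-th syzygy of $P\to G$ is isomorphic to $\Ker(T_{n-1}\to T_{n-2})$, using that $\CE$-exactness identifies syzygies as kernels of deflations in both complexes. The latter is C-E Gorenstein projective by definition, since $T$ is $\CE$-exact, consists of C-E projectives, and is $\Hom(-,\CE(\mathcal P))$-exact. Thus the truncated resolution $0\to K_n\to P_{n-1}\to\cdots\to P_0\to G\to 0$ witnesses $\text{C-E Gpd}\,G\le n$ per Definition \ref{def-3}.

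The main obstacle is the first step of ($\Rightarrow$): showing that the $n$-th syzygy in an \emph{arbitrary} C-E projective resolution is C-E Gorenstein projective, rather than only in the specific resolution given by Definition \ref{def-3}. We will reduce this to modules by Proposition \ref{prop-3} and \cite[Proposition 2.2]{YL14}, as above. The subtle point is checking that $B$ and $H$ of a $\CE$-exact resolution by C-E projectives are genuine projective resolutions of $B_m(G)$ and $H_m(G)$. This should follow from the characterization in Proposition \ref{prop-9} together with \cite[Lemma 5.2]{E11}.
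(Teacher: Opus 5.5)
\textbf{Gap: the construction of $u$ below degree $n$.} As written, this step fails. The comparison lemma builds a chain map from a complex of projectives into an exact complex by induction \emph{upward} from the bottom degree, lifting along deflations. Here $u$ is already fixed in degrees $\ge n$ and must be extended \emph{downward}. Lifting along the deflations of $P$ gives no control over the commutativity you need.

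Concretely, let $Q$ be the totally acyclic complex through $K_n$. Then $d^T_n$ factors as $T_n\twoheadrightarrow K_n\rightarrowtail T_{n-1}=Q^1$, and $d^P_n$ factors as $P_n\twoheadrightarrow K_n\rightarrowtail P_{n-1}$. So the condition $u_{n-1}d^T_n=d^P_nu_n$ says exactly that $u_{n-1}\colon T_{n-1}\to P_{n-1}$ must restrict on $K_n$ to the given inflation $K_n\rightarrowtail P_{n-1}$. This is an extension problem along an inflation, and projectivity of $T_{n-1}$ plays no role in it.

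What solves it is the $\Hom(-,\CE(\mathcal P))$-exactness of $T$ together with projectivity of the \emph{targets} $P_i$. The obstruction lies in $\Ext^1_{\CE}(T_{n-1}/K_n,P_{n-1})$, which vanishes because $T_{n-1}/K_n$ is a syzygy of $Q$. Then continue downward: $d^P_{n-1}u_{n-1}$ kills $K_n$, so it factors through the deflation $T_{n-1}\twoheadrightarrow T_{n-1}/K_n$. The resulting map must be extended along the inflation $T_{n-1}/K_n\rightarrowtail T_{n-2}$, which the same $\Ext^1$-vanishing allows. Repeat down to degree $0$, with $u_i=0$ for $i<0$. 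The paper leaves $u$ implicit, but with its choice $T=Q$ it needs exactly this extension argument as well.

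\textbf{Smaller omission: total acyclicity of your spliced $T$.} For $T=\cdots\to P_{n+1}\to P_n\to Q^1\to Q^2\to\cdots$ to qualify, you must check it is $\Hom(-,\CE(\mathcal P))$-exact. That amounts to $\Ext^1_{\CE}(K_m,P')=0$ for every syzygy $K_m$ ($m\ge n$) of your arbitrary resolution and every C-E projective $P'$. This follows by running your first step for all $m\ge n$ (Holm's theorem makes every syzygy beyond the $n$-th Gorenstein projective), but it should be stated.

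\textbf{Comparison with the paper.} The paper avoids both this check and your ``main obstacle'' entirely. It uses the particular resolution supplied by Definition \ref{def-3}, splices the \emph{left} half of $Q$ onto $P_{n-1}\to\cdots\to P_0$, and takes $T=Q$, so total acyclicity of $T$ is automatic. Your route, once repaired, proves the stronger fact that \emph{every} $\CE$-exact C-E projective resolution of $G$ is part of a complete resolution.

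Your $(\Leftarrow)$ direction is correct, and more explicit than the paper's.
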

\begin{proof}
    If the complex $G$ has finite C-E Gorenstein projective dimension, then there exists a $\CE$-exact sequence $0\to K_n\to P_{n-1}\to \cdots\to P_1\to P_0\to G\to 0$ with $P_i$ C-E projective and $K_n$ C-E Gorenstein projective. Hence, there exists a C-E-exact sequence of C-E projective complexes
    $$Q:\cdots\to Q_{1}\to Q_{0}\to Q^{1}\to\cdots$$ which remains exact after applying $\Hom(-,P')$ for every C-E projective complex $P'$, and with
    with $K_n=\Im(Q_{0}\to Q^1)$. Now let $T:=Q$ and
    $$P:=\cdots\to Q_{0}\to P_{n-1}\to \cdots\to P_1\to P_0\to 0\to \cdots,$$
    we get the complete projective resolution. Conversely, if $G$ has a complete C-E projective resolution, then there exists a diagram $T\stackrel{u}\to P\to G$ as in Definition \ref{def-1}, so $G$ has finite C-E Gorenstein projective dimension.
\end{proof}
Combining Propositions \ref{prop-3} and \ref{prop-5}, we have the following corollary.
\begin{corollary}
    For a complex $G\in\Ch(R)$, the following statements are equivalent.

    {\rm (1)} $G$ has finite $\CE$ Gorenstein projective dimension.

    {\rm (2)} $G$ has a complete $\CE$ projective resolution.

    {\rm (3)} $B(G)$ and $H(G)$ are in $\Ch(R\text{-}{\rm GProj}_{\infty})$.

    {\rm (4)} $C(G)$ and $G$ are in $\Ch(R\text{-}{\rm GProj}_{\infty})$.
\end{corollary}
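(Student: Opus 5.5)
The corollary is obtained by chaining the two preceding propositions, once we observe that finiteness is the union over $n$ of the bounded conditions. Proposition \ref{prop-5} gives the equivalence (1)$\Leftrightarrow$(2) directly. For (1)$\Leftrightarrow$(3)$\Leftrightarrow$(4), we apply Proposition \ref{prop-3} for each $n$ and then pass to the union.

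For (1)$\Rightarrow$(3) and (4): if $\text{\rm C-E Gpd}G=n<\infty$, then Proposition \ref{prop-3} puts $B(G)$, $H(G)$, $C(G)$ and $G$ in $\Ch(R\text{-}{\rm GProj}_n)$. Since ${\rm GProj}_n\subseteq{\rm GProj}_{\infty}$, this gives both (3) and (4).

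The converse directions need one extra step. Condition (3) only says that each module $B_m(G)$ and $H_m(G)$ has finite Gorenstein projective dimension, and these dimensions may vary with $m$. Proposition \ref{prop-3} requires a single bound $n$. So the real work is to show that (3) forces $\sup_m\{{\rm Gpd}B_m(G),{\rm Gpd}H_m(G)\}<\infty$, and similarly for (4). This is the main obstacle: a naive degreewise argument does not produce a uniform bound.

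I would try the following. Let $\mathcal{G}$ be the class of modules of finite Gorenstein projective dimension. Take the direct sum $M=\bigoplus_m (B_m(G)\oplus H_m(G))$. Since Gorenstein projective dimension of a direct sum equals the supremum of the dimensions of the summands, ${\rm Gpd}M<\infty$ is exactly the needed uniform bound. So the approach only works if the intended meaning of $\Ch(R\text{-}{\rm GProj}_{\infty})$ is ``bounded'' finite dimension, i.e. $\bigcup_n\Ch(R\text{-}{\rm GProj}_n)$. With that reading, (3)$\Rightarrow$(1) and (4)$\Rightarrow$(1) follow immediately from Proposition \ref{prop-3} with the common $n$, and the corollary is the union over $n$ of Proposition \ref{prop-3} together with Proposition \ref{prop-5}.

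Without that reading, I would not claim that (3) or (4) alone implies (1) unconditionally. I do not know an argument for it in general, and a degreewise counterexample (unbounded dimensions) seems plausible. I would therefore state explicitly in the proof that $\Ch(R\text{-}{\rm GProj}_{\infty})$ means $\bigcup_n\Ch(R\text{-}{\rm GProj}_n)$. Under the literal degreewise reading, only (1)$\Leftrightarrow$(2) and (1)$\Rightarrow$(3),(4) are secured.
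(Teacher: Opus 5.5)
Your argument is the paper's own: the paper proves this corollary only by ``combining Propositions~\ref{prop-3} and~\ref{prop-5}''. That step tacitly requires reading $\Ch(R\text{-}{\rm GProj}_{\infty})$ as $\bigcup_{n}\Ch(R\text{-}{\rm GProj}_{n})$, which is the reading you adopt. Your caveat is justified, and the counterexample you suspect exists: over $R=k[x_1,x_2,\dots]$ the modules $M_m=R/(x_1,\dots,x_m)$ satisfy ${\rm Gpd}\,M_m={\rm pd}\,M_m=m$ (Koszul resolution, and ${\rm Gpd}={\rm pd}$ whenever ${\rm pd}$ is finite); hence $G=\bigoplus_{m\geq 0}S^m(M_m)$ with zero differential satisfies (3) and (4) degreewise, whereas $\text{C-E Gpd}\,G\leq n$ would force ${\rm Gpd}\,H_m(G)\leq n$ for all $m$ by Proposition~\ref{prop-3}, so (1) and (2) fail under the literal reading.
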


In \cite{EEI12}, the authors give some properties of bicomplexes in the category of modules. We give their abelian version. Let $\mathcal{A}$ be an abelian category and $(X,d)$ be a complex in $\mathcal{A}$, we let $Z(X)$ be $\Ker(d)$, $B(X)$ be $\Im(d)$ and $H(X)$ be the quotient complex $Z(X)/B(X)$. By a double complex $X$ in $\mathcal{A}$, we mean a bigraded object $(X^{(i,j)})_{(i,j)\in \mathbb{Z}\times \mathbb{Z}}$ along with $d'$ and $d''$ of bidegrees $(-1,0)$ and $(0,-1)$, respectively, such that $d'^2=0$, $d''^2=0$ and $d'\circ d''+d''\circ d'=0$. Similarly, we say $X=(X^{(i,j)})_{(i,j)\in \mathbb{Z}\times \mathbb{Z}}$ is a {\it bicomplex} if we take the axioms for a double complex and replace the condition $d'\circ d''+d''\circ d'=0$ with the condition $d'\circ d''=d''\circ d'$. Given the bicomplex $X$, we let $Z'(X)$ be the bicomplex $\Ker(d')$, $B'(X)$ be $\Im(d')$ and $H'(X)$ be the quotient complex $Z'(X)/B'(X)$. We note that these complexes have their $d'=0$. Dually, one can define $Z''(X)$, $B''(X)$ and $H''(X)$.

\begin{proposition}\label{prop-4}
    Let $X$ be a bicomplex such that $H'(X)=H''(X)=0$, i.e. such that $X$ has exact rows and columns. Then $H'(Z''(X))=H''(Z'(X))$. In this case, we set $H(X)=H'(Z''(X))=H''(Z'(X))$.
\end{proposition}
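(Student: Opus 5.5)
The plan is to identify both sides as subquotients of the same object and then compare the subobjects we quotient by. Fix a bidegree $(i,j)$. Since $d'$ and $d''$ commute, $d''$ restricts to $Z'(X)$ and $d'$ restricts to $Z''(X)$, so $Z''(X)$ and $Z'(X)$ are bicomplexes. The $d'$-cycles of $Z''(X)$ and the $d''$-cycles of $Z'(X)$ are then the same subobject, namely $Z'(X)\cap Z''(X)=\Ker d'\cap\Ker d''$. This gives
$$H'(Z''(X))=\frac{Z'(X)\cap Z''(X)}{d'(Z''(X))},\qquad H''(Z'(X))=\frac{Z'(X)\cap Z''(X)}{d''(Z'(X))}.$$
Both $d'(Z''(X))$ and $d''(Z'(X))$ lie in $Z'(X)\cap Z''(X)$, because $d'^2=0$, $d''^2=0$ and the differentials commute. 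So the statement reduces to the equality of subobjects
$$d'(Z''(X))=d''(Z'(X)).$$

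To prove this equality I will use an element chase. In an arbitrary abelian category this is justified by the Freyd--Mitchell embedding applied to the small abelian subcategory generated by the relevant objects and maps. Alternatively, one can chase with generalized elements (pullbacks along epimorphisms), since only images, kernels and exactness of a finite diagram are involved.

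For the inclusion $\subseteq$, take $y=d'x$ with $d''x=0$. Exactness of the columns, i.e. $H''(X)=0$, gives some $w$ with $x=d''w$. Then
$$y=d'd''w=d''(d'w),$$
and $d'(d'w)=0$, so $d'w\in Z'(X)$ and hence $y\in d''(Z'(X))$. The inclusion $\supseteq$ is the symmetric argument: it uses exactness of the rows, $H'(X)=0$, with the roles of $d'$ and $d''$ swapped.

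The only point needing care is making the chase legitimate in a general abelian category rather than in modules. I would handle this by invoking Freyd--Mitchell, which is harmless here because every construction involved is finite. With the two subobjects equal, the identity of $Z'(X)\cap Z''(X)$ induces the identification $H'(Z''(X))=H''(Z'(X))$, and this is natural in $X$.
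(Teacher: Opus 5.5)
Your proposal is correct and follows essentially the same route as the paper: identify both kernels with $Z'(X)\cap Z''(X)$, then use exactness of rows and columns to show that $\Im(Z''(i+1,j)\to Z''(i,j))$ and $\Im(Z'(i,j+1)\to Z'(i,j))$ coincide. The only difference is in how the comparison is justified. The paper uses universal properties and induced epimorphisms, whereas you use an element chase via Freyd--Mitchell. In fact row and column exactness show that both images equal $\Im(d'd''\colon X^{(i+1,j+1)}\to X^{(i,j)})$, so no embedding theorem is needed.
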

\begin{proof}
    We set $Z'(i,j)=\Ker(X^{(i,j)}\to X^{(i-1,j)})$ and $B'(i,j)=\Im(X^{(i,j)}\to X^{(i-1,j)})$ for each $(i,j)\in\mathbb{Z}\times\mathbb{Z}$. Dually we set $Z''(i,j)$ and $B''(i,j)$. Then we note that \begin{align*}
    H'(Z''(X))^{(i,j)}=\Ker(Z''(i,j)\to Z''(i-1,j))/\Im(Z''(i+1,j)\to Z''(i,j)),\\
    H''(Z'(X))^{(i,j)}=\Ker(Z'(i,j)\to Z'(i,j-1))/\Im(Z'(i,j+1)\to Z'(i,j)).
    \end{align*}

First, we show that $\Ker(Z''(i,j)\to Z''(i-1,j))$ and $\Ker(Z'(i,j)\to Z'(i,j-1))$ are both the pullbacks of $$\xymatrix{
   & Z'(i,j) \ar@{>->}[d] \\
    Z''(i,j) \ar@{>->}[r] & X^{(i,j)}.
}$$  Fix $j$, we let $Z''(X)_j=\cdots\to Z''(i+1,j)\to Z''(i,j)\to Z''(i-1,j)\to\cdots$ and  $X_j= \cdots\to X^{(i+1,j)}\to X^{(i,j)}\to X^{(i-1,j)}\to \cdots$ be the complexes.
Note that the chain map $Z''(X)_j\rightarrowtail X_j$ induces the following diagram $$\xymatrix@R=1cm{
    \mathop{\Ker(Z''(i,j)\to Z''(i-1,j))\,}\limits_{\,} \ar@{>->}[r] \ar@{>-->}[d]^{f} & \mathop{Z''(i,j)}\limits_{\,} \ar@{>->}[d]\\
    Z'(i,j) \ar@{>->}[r] & X^{(i,j)}.
}$$ The morphism $f$ is a monomorphism by the Snake Lemma. Now if we have the following diagram
$$\xymatrix@R=1.2cm{
    X \ar@/^2pc/[rrd]^{g} \ar@/_2pc/[rdd]_{h} \ar@{-->}[rd]^{\psi} & & \\
     & \mathop{\Ker(Z''(i,j)\to Z''(i-1,j))\,}\limits_{\,} \ar@{>->}[r]^<<<<{\iota_1} \ar@{>-->}[d]^{f} & \mathop{Z''(i,j)}\limits_{\,}
     \ar[r] \ar@{>->}[d] & Z''(i-1,j) \ar[d]\\
     & Z'(i,j)\, \ar@{>->}[r] & X^{(i,j)} \ar[r] & X^{(i-1,j)}.
}$$By chasing the diagram, there uniquely exists $\psi:X\to \Ker(Z''(i,j)\to Z''(i-1,j))$ such that $\iota_1\psi=g$, and it follows that $f\psi=h$. The argument is similar for $\Ker(Z'(i,j)\to Z'(i,j-1))$. Therefore, they are isomorphic and we use $Z'''$ to denote them. For simplicity, we set $B''=\Im(Z''(i+1,j)\to Z''(i,j))$ and $B'=\Im(Z'(i,j+1)\to Z'(i,j))$. It suffices to show that $B'\cong B''$. Since $X_{j+1}$ is exact, then the chain map $X_{j+1}\twoheadrightarrow Z''(X)_j$ induces an epimorphism $\phi:Z'(i,j-1)\twoheadrightarrow B''$. By chasing diagram, one can verify that $l\phi=d''$. Then there uniquely exists $f':B''\to B'$ such that the following diagram commutes:
$$\xymatrix{
    & & Z'(i,j+1) \ar@/^2pc/[dd]^{d''} \ar@{-->>}[d]^{\phi} & & \\
    & & \mathop{B''}\limits_{\,} \ar@{>->}[d]^{l} \ar@{-->}@[blue][ld]_{f'} & & \\
    0 \ar[r] & B' \ar@{>->}[r]^{\iota'} & Z'(i,j) \ar[r] & \Coker d'' \ar[r] & 0.
}$$
Similarly, in the case of $B'$, there uniquely exists $g':B'\to B''$ and we get the following commutative diagram $$\xymatrix{
         & \mathop{B''}\limits_{\,} \ar@{>->}[d]^{i} \ar@{>->}[rd]^{i'} \ar@{-->}@[blue]@<0.5ex>[ld]^{\textcolor{blue}{f'}} & \\
         B'\, \ar@{-->}@[blue]@<0.5ex>[ru]^{\textcolor{blue}{g'}} \ar@{>->}[r]_{\iota} \ar@{>->}[rd]_{\iota'} & \mathop{Z'''\,}\limits_{\,} \ar@{>->}[d]^{i_1} \ar@{>->}[r]^{\iota_1} & \mathop{Z''(i,j)}\limits_{\,} \ar@{>->}[d]^{i_2}\\
          & Z'(i,j)\, \ar@{>->}[r]^{\iota_2} & X^{(i,j)}
}$$ such that $\iota_1ig'=i'g'=\iota_1\iota $ and $i_1\iota f'=\iota'f'=i_1i$. Therefore,
$B'\cong B''$ since each solid arrow is monomorphic.
\end{proof}

Now let $C=(C_i)$ and $D=(D_j)$ be complexes with each $C_i$ and $D_j$ in $\mathcal{A}$. We construct a bicomplex $\Hom(C,D)$ with $\Hom(C,D)^{(i,j)}=\Hom(C_i,D_j)$ and $d'=\Hom(d_C,D)$, $d''=\Hom(C,d_D)$. The proofs of the following results are similar to \cite[Proposition 3.1-3.2, Theorem 3.3 and Corollary 3.4]{EEI12}. We state them without proof.

\begin{proposition}
    The following statements hold.

    {\rm (1)} If $C$ is an exact complex and $D$ is any complex, then $Z'(\Hom(C,D))\cong \Hom(Z(C),D)$ where the isomorphism is an isomorphism of bicomplexes.

    {\rm (2)} If $C$ is any complex and $D$ is an exact complex, then $Z''(\Hom(C,D))\cong \Hom(C,Z(D))$ where the isomorphism is an isomorphism of bicomplexes.
\end{proposition}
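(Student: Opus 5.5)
The plan is to compute $Z'$ and $Z''$ entry by entry using left exactness of $\Hom$, and then check that the resulting identifications respect both differentials. Throughout, $\Hom(C,D)^{(i,j)}=\Hom(C_i,D_j)$, and $d'=\Hom(d_C,D)$ is precomposition with $d^C_{i+1}\colon C_{i+1}\to C_i$. This goes from $\Hom(C_i,D_j)$ to $\Hom(C_{i+1},D_j)$, so I will read the bidegree bookkeeping up to the obvious reindexing.

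For (1), fix $(i,j)$. Then $Z'(\Hom(C,D))^{(i,j)}=\Ker\bigl(\Hom(C_i,D_j)\xrightarrow{-\circ d^C_{i+1}}\Hom(C_{i+1},D_j)\bigr)$. Apply the left exact contravariant functor $\Hom(-,D_j)$ to the right exact sequence $C_{i+1}\to C_i\to \Coker d^C_{i+1}\to 0$. This identifies the kernel with $\Hom(\Coker d^C_{i+1},D_j)$.

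Here exactness of $C$ is used. It gives $\Coker d^C_{i+1}=C_i/B(C)\cong \Im d^C_i=Z(C)$ in the appropriate degree. In the paper's indexing this is $C_{i+1}(C)\cong B_i(C)=Z_{i-1}(C)$. The isomorphism is induced by $d^C_i$ itself, so it is canonical. Hence
\[
Z'(\Hom(C,D))^{(i,j)}\cong \Hom(Z(C)_{i},D_j)
\]
after the index shift, and I will fix that shift once so that it matches the grading of $\Hom(Z(C),D)$.

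Next I check the differentials. The $d'$ on $Z'(\Hom(C,D))$ is zero by definition. On $\Hom(Z(C),D)$ the $d'$ is $\Hom(d_{Z(C)},D)=0$, because $Z(C)$ has zero differential. The $d''$ on both sides is postcomposition with $d^D_j$. The identification above is natural in the second variable, being given by precomposition with the canonical map $C_i\twoheadrightarrow Z(C)$. So it commutes with $d''$, and we get an isomorphism of bicomplexes.

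For (2) the argument is dual but simpler. $Z''(\Hom(C,D))^{(i,j)}=\Ker\bigl(\Hom(C_i,D_j)\xrightarrow{d^D_j\circ-}\Hom(C_i,D_{j-1})\bigr)$, which equals $\Hom(C_i,Z_j(D))$ by left exactness of the covariant functor $\Hom(C_i,-)$. This holds without any hypothesis on $D$; exactness of $D$ is only needed to keep the statement parallel to (1) (and it is what makes $Z(D)\cong C(D)$ shifted). Compatibility with $d''$ holds because both sides have $d''=0$, as $Z(D)$ has zero differential. Compatibility with $d'$ holds because precomposition with $d^C$ commutes with the inclusion $Z_j(D)\hookrightarrow D_j$.

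I expect the main obstacle to be purely notational. The step most likely to need care is matching the degree shift $\Coker d^C_{i+1}\cong Z_{i-1}(C)$ with the sign and degree conventions for $\Hom(Z(C),D)$ as a bicomplex, and confirming that the canonical map, rather than some sign-twisted one, is used. I would fix the convention by comparing with \cite[Proposition 3.1]{EEI12} and transcribe it verbatim to the abelian setting. No element-chasing is needed, since only left exactness of $\Hom$ is used.
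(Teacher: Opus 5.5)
Your proposal is correct. It is the standard componentwise argument: left exactness of $\Hom$ in each variable, plus exactness of $C$ to identify $\Coker d^C_{i+1}$ with $Z_{i-1}(C)$ through the corestriction of $d^C_i$. The paper does not prove this result itself but defers to \cite[Propositions 3.1--3.2]{EEI12}, and your argument is that route. Your remark that exactness of $D$ is superfluous in (2) is also right.
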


\begin{proposition}\label{prop-11}
    If $C$ and $D$ are both exact complexes and if $\Hom(C,D)$ has exact rows and columns, then for each $(i,j)\in\mathbb{Z}\times\mathbb{Z}$, we have $$H(\Hom(C,D))^{(i,j)}\cong H^j(\Hom(Z_i(C),D))\cong H^i(\Hom(C,Z_j(D))).$$
\end{proposition}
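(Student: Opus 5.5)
The plan is to combine Proposition \ref{prop-4} with the preceding proposition identifying $Z'$ and $Z''$ of the Hom bicomplex, so the proof amounts to bookkeeping of indices. Set $X=\Hom(C,D)$, with $X^{(i,j)}=\Hom(C_i,D_j)$, $d'=\Hom(d_C,D)$ and $d''=\Hom(C,d_D)$. By hypothesis $X$ has exact rows and columns, so Proposition \ref{prop-4} gives
\[
H(X)=H''(Z'(X))\cong H'(Z''(X)).
\]
It therefore suffices to identify each of these two bicomplexes at position $(i,j)$ with the stated cohomology groups.

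For the first identification, $C$ is exact, so part (1) of the preceding proposition gives an isomorphism of bicomplexes $Z'(X)\cong \Hom(Z(C),D)$. Here $Z(C)$ is the complex of cycles with zero differential; its entry at position $(i,j)$ is $\Hom(Z_i(C),D_j)$, and the $d''$-direction is $\Hom(Z_i(C),d_D)$. Since the differential of $Z'(X)$ in the $d'$ direction vanishes, taking $H''$ amounts to fixing $i$ and computing cohomology of the column $\Hom(Z_i(C),D)$ at spot $j$. This gives $H''(Z'(X))^{(i,j)}\cong H^j(\Hom(Z_i(C),D))$. The one thing to verify is that the index convention $Z'(i,j)=\Ker(X^{(i,j)}\to X^{(i-1,j)})$ matches $\Hom(Z_i(C),D_j)$ rather than a shifted cycle $Z_{i\pm1}(C)$. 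I would check this directly: $\Ker(\Hom(C_i,D_j)\to\Hom(C_{i+1},D_j))=\Hom(C_i/B_i,D_j)$, and since $C$ is exact, $C_i/B_i(C)\cong Z_{i-1}(C)$ (or $Z_i(C)$, depending on the indexing convention). I would then adopt whichever labelling makes part (1) literally true, as the statement implicitly does.

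Symmetrically, $D$ is exact, so part (2) gives $Z''(X)\cong \Hom(C,Z(D))$. Fixing $j$, the rows are the complexes $\Hom(C,Z_j(D))$, and therefore $H'(Z''(X))^{(i,j)}\cong H^i(\Hom(C,Z_j(D)))$.

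Combining these two identifications with the isomorphism from Proposition \ref{prop-4} yields the claimed chain of isomorphisms. The main obstacle is not conceptual. It lies in making sure that the isomorphisms of bicomplexes in the preceding proposition are compatible with the remaining differential, so that passing to $H''$ or $H'$ is legitimate, and in keeping the degree conventions consistent. I would handle this by writing out the square
\[
\Hom(C_i,D_j)\to\Hom(C_i,D_{j-1})
\]
restricted to the kernels and checking that it commutes with the isomorphisms. Naturality of $\Hom$ in the second variable makes this immediate.
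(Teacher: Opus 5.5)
Your proposal is correct and follows the paper's argument. The paper's proof consists only of an appeal to Proposition~\ref{prop-4}, implicitly combined with the preceding identifications $Z'(\Hom(C,D))\cong\Hom(Z(C),D)$ and $Z''(\Hom(C,D))\cong\Hom(C,Z(D))$, and you spell this out. Your index check is also right, and the paper glosses over it: with the paper's conventions, the kernel of $\Hom(C_i,D_j)\to\Hom(C_{i+1},D_j)$ is $\Hom(C_i/\Im d_{i+1},D_j)\cong\Hom(Z_{i-1}(C),D_j)$, so the first isomorphism holds literally only after the reindexing in $i$ that you anticipated.
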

\begin{proof}
    This follows from Proposition \ref{prop-4}.
\end{proof}

\begin{corollary}\label{coro-1}
    Under the same assumptions as in Proposition \ref{prop-11}, for any $n\in\mathbb{Z}$ we have $H^n(\Hom(Z_0(C),D))\cong H^n(\Hom(C,Z_0(D)))$.
\end{corollary}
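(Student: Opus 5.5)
The natural route is to read Corollary \ref{coro-1} off Proposition \ref{prop-11} at a suitable bidegree, after pinning down the indexing conventions. Proposition \ref{prop-11} gives, for each $(i,j)\in\mathbb{Z}\times\mathbb{Z}$,
$$H(\Hom(C,D))^{(i,j)}\cong H^j(\Hom(Z_i(C),D))\cong H^i(\Hom(C,Z_j(D))).$$
We want $Z_0(C)$ on the left and $Z_0(D)$ on the right with the same cohomological degree $n$. Taking $(i,j)=(0,n)$ gives $H^n(\Hom(Z_0(C),D))\cong H^0(\Hom(C,Z_n(D)))$. Taking $(i,j)=(n,0)$ gives $H^0(\Hom(Z_n(C),D))\cong H^n(\Hom(C,Z_0(D)))$. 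These are not yet the desired statement, so the cycle index must be shifted against the cohomological index.

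The key step is a dimension-shifting identification inside each exact complex. Since $C$ is exact, for every $i$ there is a short exact sequence $0\to Z_{i}(C)\to C_{i}\to Z_{i-1}(C)\to 0$. The hypothesis that $\Hom(C,D)$ has exact rows and columns plays the role of $\Hom(C_i,D)$ being acyclic in the relevant sense. Applying $\Hom(-,D)$ to these sequences and using that acyclicity gives connecting isomorphisms
$$H^{j}(\Hom(Z_{i}(C),D))\cong H^{j+1}(\Hom(Z_{i-1}(C),D)).$$
I would check this at the level of the bicomplex. The class in $H(\Hom(C,D))^{(i,j)}$ is the $H''$ of the $d'$-kernels, and moving one step along a row through an exact row corresponds exactly to the diagonal move $(i,j)\mapsto(i-1,j+1)$ in the bicomplex, with the appropriate sign and degree convention for $d''$. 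Equivalently, $H(\Hom(C,D))^{(i,j)}\cong H(\Hom(C,D))^{(i-1,j+1)}$: a zig-zag along exact rows and columns identifies them. This is the standard staircase argument, which Proposition \ref{prop-4} essentially packages.

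With the shift in hand, iterating it gives $H(\Hom(C,D))^{(0,n)}\cong H(\Hom(C,D))^{(n,0)}$ (with a sign on $n$ depending on the grading convention for $\Hom$). Hence
$$H^n(\Hom(Z_0(C),D))\cong H(\Hom(C,D))^{(0,n)}\cong H(\Hom(C,D))^{(n,0)}\cong H^n(\Hom(C,Z_0(D))).$$
Alternatively, one can apply the second isomorphism of Proposition \ref{prop-11} directly at $(0,n)$ after reindexing $D$ by a suspension so that $Z_n$ becomes $Z_0$. That requires checking that suspension preserves the hypotheses, which it clearly does, and commutes with the $H^i$ shift.

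The main obstacle is the bookkeeping of indices. One has to make sure the superscript on $H$ refers to the correct direction and that the diagonal shift goes the right way, so that both sides end up at the same $n$. I would verify this first on the case $n=0$, where the statement is literally Proposition \ref{prop-11} at $(0,0)$: there it reads $H^0(\Hom(Z_0(C),D))\cong H(\Hom(C,D))^{(0,0)}\cong H^0(\Hom(C,Z_0(D)))$. Then I would propagate to general $n$ by the staircase isomorphism.
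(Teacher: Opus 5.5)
Your proposal is correct and follows essentially the route the paper intends. The paper gives no separate proof of the corollary and treats it as a consequence of Proposition \ref{prop-11}; the one extra ingredient needed is exactly your diagonal (staircase/dimension-shifting) isomorphism $H(\Hom(C,D))^{(i,j)}\cong H(\Hom(C,D))^{(i-1,j+1)}$, up to the grading convention, which carries the bidegree $(n,0)$ to $(0,n)$.

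There are two small remarks. First, the sequence $0\to\Hom(Z_{i-1}(C),D)\to\Hom(C_i,D)\to\Hom(Z_i(C),D)\to 0$ is exact on the right because the rows are exact: every map $Z_i(C)\to D_j$ extends over $C_i$. Acyclicity of $\Hom(C_i,D)$ alone does not give this. Second, your suggested alternative of only suspending $D$ does not work. Reindexing $D$ leaves the complex $\Hom(C,Z_n(D))$ unchanged, so it cannot shift its cohomological degree.
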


Note that $\Ext^n_{\text{C-E}}(M,N)\cong H^n(\Hom(P,N))$ with any C-E projective resolution of $M$ by \cite[Appendix B]{G16} and \cite[Chapter XII.4]{M63}. This means that the complete C-E projective resolution is unique up to homotopy.

\begin{definition}
    Let $M$ be a complex that has a complete C-E projective resolution $T\to P\to M$. For each complex $N$ and each integer $n\in\mathbb{Z}$, the {\it Tate cohomology group relative to $\CE$}, denoted $\widehat{\Ext}^n_{\text{C-E}}(M,N)$, is defined by $\widehat{\Ext}^n_{\text{C-E}}(M,N)=H^n(\Hom(T,N))$. Dually, let $N$ be any complex that has a complete C-E injective resolution $N\to E\to I$, for any complex $M$ and $n\in\mathbb{Z}$, set $\overline{\Ext}^n_{\text{C-E}}(M,N)=H^n(\Hom(M,I))$.
\end{definition}

\begin{theorem}\label{thm-4}
    If $T\to P\to M$ and $N\to I\to U$ are complete $\CE$ projective and injective resolutions of $M$ and $N$, respectively, then the cohomology groups of $\Hom(T,N)$ and $\Hom(M,U)$ are naturally isomorphic.
\end{theorem}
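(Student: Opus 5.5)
The plan is to mimic Iacob's balance argument for generalized Tate cohomology \cite{I05}. We apply Corollary \ref{coro-1} to the bicomplex $\Hom(T,U)$ and then identify both sides with the Tate groups.

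First we fix the setup. Here $T$ is a $\CE$-exact complex of C-E projective complexes, and it stays exact under $\Hom(-,P')$ for every C-E projective $P'$. Dually, $U$ is a $\CE$-exact complex of C-E injective complexes, and it stays exact under $\Hom(I',-)$ for every C-E injective $I'$. We work in the abelian category $\mathcal A=\Mod\mathbb Z$ and consider the bicomplex $X=\Hom(T,U)$ with $X^{(i,j)}=\Hom_{\Ch(R)}(T_i,U_j)$. To use Proposition \ref{prop-11} and Corollary \ref{coro-1}, we must check the following.
\begin{enumerate}
\item[(a)] The rows and columns of $X$ are exact.
\item[(b)] We have $Z'(X)\cong\Hom(Z(T),U)$ and $Z''(X)\cong\Hom(T,Z(U))$. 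Here $Z(T)$ and $Z(U)$ are the cycle objects, computed in $\Ch(R)$ because $T,U$ are $\CE$-exact.
\end{enumerate}
For (a), each column $\Hom(T,U_j)$ is exact because $U_j$ is C-E injective and $T$ is built from conflations, so $\Hom(-,U_j)$ is exact on it. Each row $\Hom(T_i,U)$ is exact because $T_i$ is C-E projective. The identification in (b) is the abelian proposition stated before Proposition \ref{prop-11}, applied with the left exactness of $\Hom$.

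Corollary \ref{coro-1}, after reindexing, then gives $H^n(\Hom(Z_k(T),U))\cong H^n(\Hom(T,Z_l(U)))$ for the appropriate cycles. The remaining task is to identify these groups with $H^n(\Hom(T,N))$ and $H^n(\Hom(M,U))$. Since $u_n\colon T_n\to P_n$ is an isomorphism for $n\gg0$, some cycle $Z_k(T)$ equals a syzygy $K$ of $M$ in the C-E projective resolution $P$. Dimension shifting along $P$ and $T$ then shows $H^n(\Hom(T,U))$-type groups are $\widehat{\Ext}^{n}(M,-)$ up to shift. More concretely, I would prove the following.
\begin{enumerate}
\item[(i)] $H^n(\Hom(K,U))\cong H^{n+k}(\Hom(M,U))$. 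This holds because the C-E projective part of $P$ is $\Hom(-,U)$-acyclic: $U$ is an exact complex of C-E injectives, so $\Hom(Q,U)$ is exact for C-E projective $Q$, by a standard bounded-below/total-complex argument using the rows.
\item[(ii)] $H^n(\Hom(T,Z_l(U)))\cong H^{n+l}(\Hom(T,N))$, by the dual argument, using that $N\to I$ is a C-E injective coresolution and that $T$ is $\Hom(-,\text{C-E inj})$-exact. Here one needs that $\Hom(T,E)$ is exact for every C-E injective $E$; this holds since $T$ is built from $\CE$-conflations.
\end{enumerate}

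The main obstacle is the index bookkeeping: we need the shifts to match to a clean isomorphism in every degree $n\in\mathbb Z$ rather than only for large $n$. We also need acyclicity of $\Hom(T,I_{\text{finite part}})$ and $\Hom(P_{\text{finite part}},U)$, which mixes unbounded complexes with $\Hom$. I would handle this by first reducing, via the isomorphisms $T_n\cong P_n$ for $n\gg0$, to the short exact sequences of syzygies and applying the long exact $\Hom$-sequences one step at a time. Naturality will follow because every isomorphism used comes from the bicomplex construction or from connecting maps, which are natural in $M$ and $N$ by uniqueness up to homotopy of complete resolutions.
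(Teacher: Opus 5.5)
Your proposal is correct and is essentially the paper's argument. Both apply Corollary \ref{coro-1} to the bicomplex $\Hom(T,U)$ and dimension-shift along the syzygies of $M$ and the cosyzygies of $N$; the rows are exact because each $T_i$ is C-E projective and $U$ is $\CE$-exact, and the columns dually. The paper merely shifts first, reducing to a C-E Gorenstein projective $G$ and a C-E Gorenstein injective $H$ that are zero-cycles of shifts of $T$ and $U$, and only then invokes the corollary.

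Two small remarks. Each shift is an isomorphism in every degree $n\in\mathbb{Z}$, so the index bookkeeping you flag causes no trouble. Also, exactness of $\Hom(Q,U)$ for C-E projective $Q$ needs no bounded-below or total-complex argument: it holds for the same reason as your row exactness in (a).
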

\begin{proof}
    By Proposition \ref{prop-5} and its dual, we set $\text{\rm C-E Gpd}M=d$ and $\text{\rm C-E Gid}N=l$. Without loss of generality we may assume that $l\geq d$. Then there exists a partial C-E projective resolution $0\to G\to P_{d-1}\to \cdots\to P_0\to M$  of $M$. Then $G$ is C-E Gorenstein projective and $M$ has a complete C-E projective resolution $T\to P\to M$ with $T=\cdots\to T_{d}\to T_{d-1}\to \cdots$ a $\Hom(-,\text{C-E}(\mathcal{P}))$ $\CE$-exact complex of C-E projective complexes such that $G=\Im (T_d\to T_{d-1})=\Ker (T_{d-1}\to T_{d-2})$ and $P= \cdots\to T_{d} \to P_{d-1}\to \cdots\to P_0\to 0 \to\cdots$. Let $G_0=\Ker(P_0\to M)$ and $\text{\rm C-E Gpd}G_0=d-1$. Then $G_0$ has a complete C-E projective resolution $T[-1]\to \bar{P}\to G_0$ with $\bar{P}=\cdots\to T_{d} \to P_{d-1}\to \cdots\to P_{1}\to 0 \to\cdots$. Dimension shifting gives $\widehat{\Ext}^j_{\text{C-E}}(M,-)\cong \widehat{\Ext}^{j-1}_{\text{C-E}}(G_0,-)$, so $\widehat{\Ext}^j_{\text{C-E}}(M,-)\cong \widehat{\Ext}^{j-d}_{\text{C-E}}(G,-)$. Suppose that $0\to N\to E^0\to E^1\to \cdots \to E^{l-1}\to H\to 0$ is a partial C-E injective resolution of $N$ and $H$ is C-E Gorenstein injective.

    Now let $H_0=\Ker(E^1\to E^2)$ and we have a $\CE$-exact sequence $0\to N\to E^0\to H_0\to 0$. For each $i\in\mathbb{Z}$, we have a short exact sequence $0\to \Hom(T_i,N)\to \Hom(T_i,E^0)\to \Hom(T_i,H_0)\to 0$ and then get $0\to\Hom(T,N)\to \Hom(T,E^0)\to \Hom(T,H_0)\to 0$. Note that $E^0$ is C-E injective, so $\widehat{\Ext}^i_{\text{C-E}}(M,H_0)\cong \widehat{\Ext}^{i+1}_{\text{C-E}}(M,N)$. Then we have $\widehat{\Ext}^i_{\text{C-E}}(M,N)\cong \widehat{\Ext}^{i+1}_{\text{C-E}}(M,H_0)\cong \cdots\cong\widehat{\Ext}^{i+l}_{\text{C-E}}(M,H)$. Combining these isomorphisms, we obtain that $\widehat{\Ext}^i_{\text{C-E}}(M,N)\cong \widehat{\Ext}^{i-d}_{\text{C-E}}(G,N)\cong \widehat{\Ext}^{i-d+l}_{\text{C-E}}(G,H)$.

    Similarly, one can get $\overline{\Ext}^i_{\text{C-E}}(M,N)\cong \overline{\Ext}^{i+l}_{\text{C-E}}(M,H)\cong \overline{\Ext}^{i+l-d}_{\text{C-E}}(G,H)$. Since $G$ has a complete
    C-E projective resolution $T[-d]\to P_G \to G$ with $P_G=\tau_{>0}T[-d]$ and $H$ has a
    complete C-E injective resolution $H\to I_H\to U[l]$ with $I_H=\tau_{<0}U[l]$. Thus, by
    Corollary \ref{coro-1}, we have $\widehat{\Ext}^{n}_{\text{C-E}}(G,H)=H^n(\Hom(T[-d],Z_0(U[l])))\cong H^n(\Hom(Z_0(T[-d]),U[l]))=\overline{\Ext}^{n}_{\text{C-E}}(G,H)$ for each
    $n\in\mathbb{Z}$.
\end{proof}


\begin{remark} \label{rmk-2}
    Suppose that $R$ is left virtually Gorenstein. As already observed in the proof of
    Theorem \ref{thm-1}, the pair $({\rm GProj},{\rm GProj}^{\perp})$ is a complete hereditary
    cotorsion pair, and the pair
    $$(\text{C-E}({\rm GProj}),\text{C-E}({\rm GProj}^{\perp}))$$
    is also a complete hereditary cotorsion pair in $(\Ch(R),\text{C-E})$. Hence the triple
    $$\mathfrak{M}=(\text{C-E}({\rm GProj}),\text{C-E}({\rm GProj}^{\perp}),\Ch(R))$$
    defines an exact model structure on $(\Ch(R),\text{C-E})$. Using the same notation as in the
    proof of Theorem \ref{thm-4}, we have that
    $\widehat{\Ext}^i_{\text{C-E}}(M,N)\cong \widehat{\Ext}^{i-d}_{\text{C-E}}(G,N)\cong \widehat{\Ext}^{i-d+l}_{\text{C-E}}(G,H)$.
    In \cite{G21}, Gillespie shows how exact model structures can be used to obtain canonical resolutions and to define new
    relative derived functors from them. For the model structure $\mathfrak{M} $ above, the construction in \cite[Section 1]{G21} yields that, for the C-E Gorenstein projective complex $G$, the object $W_G$ described in \cite[Section 1]{G21} is precisely the $\CE$-exact complex $T$. Therefore, \cite[Definition 4.6]{G21} gives $\ell\Ext_{\mathfrak{M}}^n(G,H)=\widehat{\Ext}^n_{\text{C-E}}(G,H)$.
    Similarly, using the exact model structure $$\mathfrak{M}'=(\Ch(R),\text{C-E}(^{\perp}{\rm GInj}),\text{C-E}({\rm GInj})), $$ we obtain
    $r\Ext_{\mathfrak{M}'}^n(G,H)=\overline{\Ext}^n_{\text{C-E}}(G,H)$.
    (Note that, as mentioned in \cite[1A.]{G21}, the results in \cite{G21} hold for the case of exact
    categories.)
    \newline Now, since \(R\) is left virtually Gorenstein, one has ${\rm GProj}^{\perp}={}^{\perp}{\rm GInj},$
and therefore
\[
\text{\rm C-E}({\rm GProj}^{\perp})
=
\text{\rm C-E}({}^{\perp}{\rm GInj}).
\]
Together with \cite[Lemma 5.4]{EG19} and \cite[Lemma 8.26]{G25}, this shows that
the two relevant model structures $\mathfrak{M}$ and $\mathfrak{M}'$ are Quillen equivalent. Thus
\cite[Theorem 6.3]{G21} gives natural isomorphisms
\[
\overline{\Ext}^{\,n}_{\text{\rm C-E}}(G,H)
\cong
\widehat{\Ext}^{\,n}_{\text{\rm C-E}}(G,H).
\]
Consequently, for the original complexes \(M\) and \(N\), we recover Theorem \ref{thm-4},
\[
\overline{\Ext}^{\,n}_{\text{\rm C-E}}(M,N)
\cong
\widehat{\Ext}^{\,n}_{\text{\rm C-E}}(M,N).
\]
The point to emphasize is that Gillespie's construction is categorical in
nature and depends on the completeness of the cotorsion pairs involved. Our
definition, however, is based directly on complete Cartan--Eilenberg
resolutions. Thus, it does not require the completeness of the cotorsion pair
\(({\rm GProj},{\rm GProj}^{\perp})\) as an a priori assumption.

\end{remark}

As mentioned above, $\Ext^n_{\text{C-E}}(M,N)\cong H^n(\Hom(P,N))$ with any $\CE$-exact C-E projective resolution $P$ of $M$. Hence we consider the relative derived functors in $(\Ch (R), \text{C-E})$.

\begin{definition}
    Let $\mathcal{D}$ be a class of complexes and $M$ be a complex that has a $\CE$-exact $\mathcal{D}$-resolution $P:\cdots\to P_1\to P_0\to M\to 0$. Then we define $\Ext^n_{\CE,\mathcal{D}}(M,N)=H^n(\Hom(P_{\bullet},N))$ for any complex $N$ and $n\geq 0$, where $P_{\bullet}$ is its deleted resolution.
\end{definition}

If we take $\mathcal{D}$ to be the class of all C-E projective complexes, then $\Ext^n_{\text{C-E},\text{C-E}(\mathcal{P})}(M,N)=\Ext^n_{\text{C-E}}(M,N)$. Now let $\mathcal{D}\subseteq\mathcal{C}$ be two classes in $(\Ch (R), \text{C-E})$. For a complex $M$ that has both a $\CE$-exact $\mathcal{D}$-resolution $P:\cdots\to P_1\to P_0\to M\to 0$ and a $\CE$-exact $\mathcal{C}$-resolution $C:\cdots\to C_1\to C_0\to M\to 0$, there exists a chain map $u=(u_n)_{n\geq 0}:P\to C$ such that the following diagram commutes
$$\xymatrix{
   P: \ar[d]^{u} & \cdots \ar[r] & P_1 \ar[r]^{f_1} \ar[d]^{u_1} & P_0 \ar[r] \ar[d]^{u_0} & M \ar[r] \ar@{=}[d] & 0 \\
   C: & \cdots\ar[r] & C_1 \ar[r]^{g_1} & C_0 \ar[r] & M \ar[r] & 0.
}$$
Define $M(u)$ to be the mapping cone of the chain map
$u:P_{\bullet}\to C_{\bullet}$ with the following form
$$M(u): \cdots \to C_2\oplus P_1\to C_1\oplus P_0\to C_0\to 0.$$
The differential map is of the form $\left(\smqty{g&u\\0&-f}\right)$. Note that $P$ and $C$ are $\CE$-exact, then we can get commutative diagrams in $\Mod R$ after applying $Z_n, H_n, B_n$ and $(\cdot)_{n}$ to the chain map $u$ and so $M(u)$ is $\CE$-exact. Then we define $\widehat{\Ext}^n_{\text{C-E},\mathcal{C},\mathcal{D}}(M,N)=H^{n+1}(\Hom(M(u),N))$ to be the {\it generalized Tate cohomology with respect to C-E}. This definition is well-defined, the proof is similar to that in \cite{I05}. If we take $\mathcal{C}=\text{C-E}({\rm GProj}):=\mathcal{G}$ and $\mathcal{D}=\text{C-E}(\mathcal{P})$, then we obtain the connection between Tate cohomology and generalized Tate cohomology under certain conditions.

\begin{proposition}\label{prop-6}
    Let $M$ be a complex with $\text{\rm C-E Gpd}M<\infty$, then for each complex $N$, we have $\widehat{\Ext}^n_{\CE,\mathcal{G},\CE(\mathcal{P})}(M,N)\cong \widehat{\Ext}^n_{\CE}(M,N)$ for any $n\geq 1$.
\end{proposition}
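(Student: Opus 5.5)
We choose the two resolutions in the definition of $\widehat{\Ext}^n_{\CE,\mathcal{G},\CE(\mathcal{P})}(M,N)$ so that the mapping cone can be compared directly with the complete resolution; well-definedness lets us pick them freely. Put $d=\text{\rm C-E Gpd}M$. By Proposition \ref{prop-5} and its proof, fix a complete C-E projective resolution $T\to P\to M$ in which $P$ agrees with $T$ in degrees $\geq d$, and $G=\Im(T_d\to T_{d-1})$ is C-E Gorenstein projective. Take $D:=P$ as the $\CE$-exact $\CE(\mathcal{P})$-resolution of $M$. For the $\mathcal{G}$-resolution, use the special resolution from the complete hereditary cotorsion pair $(\mathcal{G},\mathcal{G}^{\perp})$. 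When $M$ has finite C-E Gorenstein projective dimension there is a cleaner finite choice: a $\CE$-exact sequence
$$0\to K\to G_0\to M\to 0$$
with $G_0\in\mathcal{G}$ and $K$ of finite C-E projective dimension ($\leq d-1$), obtained from $T$ by the standard pushout/Auslander–Buchweitz construction. Splicing with a finite C-E projective resolution of $K$ gives a finite $\CE$-exact $\mathcal{G}$-resolution $C$ of length $\le d$. It is a genuine $\mathcal{G}$-resolution, i.e.\ $\Hom(\mathcal{G},-)$-exact, because $\Ext^{\geq1}_{\CE}(\mathcal{G},\text{finite C-E pd})=0$.

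Next we lift the identity of $M$ to a chain map $u:P\to C$ and form the cone $M(u)$. The comparison map is $T\to P$, which is an isomorphism in degrees $\geq d$. The goal is to show that for $n\geq1$ the groups $H^{n+1}(\Hom(M(u),N))$ and $H^n(\Hom(T,N))$ agree. I would argue this with the degreewise split short exact sequence of complexes
$$0\to C_\bullet\to M(u)\to \Sigma P_\bullet\to 0.$$
It gives a long exact sequence
$$\cdots\to \Ext^n_{\CE,\mathcal{G}}(M,N)\to\Ext^n_{\CE}(M,N)\to\widehat{\Ext}^n_{\CE,\mathcal{G},\CE(\mathcal{P})}(M,N)\to\Ext^{n+1}_{\CE,\mathcal{G}}(M,N)\to\cdots.$$
The complete resolution $T\to P$ gives the analogous Avramov–Martsinkovsky-type comparison. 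Its proof runs through the truncation $\tau_{\ge d}$ and dimension shifting exactly as in the proof of Theorem \ref{thm-4}, reducing both sides to the C-E Gorenstein projective syzygy $G$.

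For the cleanest route, I would first treat the case where $M=G$ is C-E Gorenstein projective. Take $C=(0\to G\to G\to 0)$, and let $u$ be the augmentation. Then $M(u)$ is, up to the shift, the complex $\cdots\to P_1\to P_0\to G$. Since $G$ embeds in $T$ as a syzygy, $\Hom(M(u),N)$ computes the cohomology of $\Hom(T,N)$ in degrees $\geq1$. Concretely, $H^{n+1}(\Hom(M(u),N))=H^n(\Hom(P_\bullet,N))=H^n(\Hom(T,N))$ for $n\geq1$, because $T$ and $P$ agree in nonnegative degrees when $d=0$. Dimension shifting then extends this to general $M$. Using $0\to G_0'\to P_0\to M\to 0$ (with $G_0'=\Ker(P_0\to M)$, so $\text{\rm C-E Gpd}G_0'=d-1$), both theories shift by one: Tate cohomology via the argument in Theorem \ref{thm-4}, and generalized Tate cohomology via the long exact sequence, since $P_0$ has vanishing generalized Tate groups, its cone being contractible. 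Induction on $d$ finishes the proof.

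The main obstacle is the dimension-shift step for the generalized Tate cohomology. We need a connecting isomorphism $\widehat{\Ext}^{n}_{\CE,\mathcal{G},\CE(\mathcal{P})}(G_0',N)\cong\widehat{\Ext}^{n+1}_{\CE,\mathcal{G},\CE(\mathcal{P})}(M,N)$, and for this a horseshoe-type construction of compatible $\mathcal{G}$- and $\CE(\mathcal{P})$-resolutions along a conflation. That construction must keep the cones $\CE$-exact and $\Hom(\mathcal{G},-)$-exact. I would handle it as in \cite{I05}, using that $\mathcal{G}$ is closed under extensions and that $P_0\in\mathcal{G}$. The restriction $n\geq1$ is needed because in degree $0$ the cone sees $\Hom(M,N)$-level terms.
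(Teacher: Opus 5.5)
Your base case $d=0$ is correct, and your finite $\mathcal G$-resolution is essentially the paper's $G_M$. The gap is that the induction on $d$ cannot reach degree $n=1$.

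To get $\widehat{\Ext}^{1}_{\CE,\mathcal G,\CE(\mathcal P)}(M,N)\cong\widehat{\Ext}^{1}_{\CE}(M,N)$ from the statement for $G_0'$, you need two things: your connecting isomorphism at $n=0$, and the case $n=0$ for $G_0'$. Both fail. Take $d=1$. Your own base-case cone gives $\widehat{\Ext}^{0}_{\CE,\mathcal G,\CE(\mathcal P)}(G_0',N)=H^{1}(\Hom(\cdots\to P_1\to P_0\to G_0'\to 0,N))=0$, by left exactness of $\Hom(-,N)$. In general the same vanishing follows from the long exact sequence of $0\to C_\bullet\to M(u)\to\Sigma P_\bullet\to 0$: the map on $\Ext^0$ is the identity of $\Hom(X,N)$, and $\Ext^1_{\CE,\mathcal G}(X,N)\to\Ext^1_{\CE}(X,N)$ is always injective. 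On the other hand, $\widehat{\Ext}^{1}_{\CE}(M,N)\cong\widehat{\Ext}^{0}_{\CE}(G_0',N)$ is $\Hom(G_0',N)$ modulo maps factoring through C-E projective complexes. This is nonzero for $N=G_0'$ as soon as $G_0'$ is not C-E projective. So the bottom-degree defect is not a uniform edge effect: it compounds at each step, and iterating your shift only proves the isomorphism for $n\ge d+1$.

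There are two further problems. The horseshoe construction along $0\to G_0'\to P_0\to M\to 0$ requires this conflation to be $\Hom(\mathcal G,-)$-exact. It is not in general, since a map to $M$ from a C-E Gorenstein projective complex need not lift to $P_0$, and closure of $\mathcal G$ under extensions does not repair this. Your first sketch, comparing two long exact sequences, supplies no morphism between them. Also, the Avramov--Martsinkovsky sequence for $\widehat{\Ext}_{\CE}$ is what the paper deduces \emph{from} this proposition (Corollary \ref{coro-3}), so it cannot be assumed here.

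The missing step is to do your base-case computation directly for arbitrary $d$, with resolutions adapted to $T$, as the paper does.
\begin{itemize}
\item Put $L=\Im(T_0\to T_{-1})$. Since $T$ is $\Hom(-,\CE(\mathcal P))$-exact, the identity of $G$ extends to a map from $0\to G\to T_{d-1}\to\cdots\to T_0\to L\to 0$ to $0\to G\to P_{d-1}\to\cdots\to P_0\to M\to 0$.
\item Its mapping cone gives your $\mathcal G$-resolution explicitly as $0\to T_{d-1}\to P_{d-1}\oplus T_{d-2}\to\cdots\to P_1\oplus T_0\to P_0\oplus L\to M\to 0$.
\item Take the C-E projective resolution to be $\cdots\to T_{d+1}\to T_d\to P_{d-1}\to\cdots\to P_0\to M$.
\item Let $e$ be $\binom{1}{0}$ in degrees $<d$, the differential $T_d\to T_{d-1}$ in degree $d$, and $0$ above.
\end{itemize}
After cancelling the contractible pieces $P_i\xrightarrow{1}P_i$, the cone $M(e)$ becomes $\cdots\to T_1\to T_0\to L\to 0$. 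This is $\Sigma T$ with its degree-$0$ term $T_{-1}$ replaced by $L$. Hence $H^{n+1}(\Hom(M(e),N))\cong H^{n}(\Hom(T,N))$ for all $n\geq 1$ at once, with no induction. Degree $0$, where $L\neq T_{-1}$, is correctly excluded.
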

\begin{proof}
    Let $\text{\rm C-E Gpd}M=n$, then there exists a $\CE$-exact sequence $$0\to C\to P_{n-1}\stackrel{f_{n-1}}\to P_{n-2}\to \cdots \to P_{1}\stackrel{f_{1}}\to P_0\stackrel{f_{0}}\to M\to 0$$ with each $P_{i}$ a C-E projective and $C$ a C-E Gorenstein projective. Therefore, we get a complete C-E projective resolution of $M$: $T\stackrel{u}\to P\to M$: $$\xymatrix{
        T: \ar[d]^{u} & \cdots\ar[r] &P^{2} \ar[r]^{d^{2}} \ar@{=}[d] & P^{1}\ar[r]^{d^{1}} \ar@{=}[d] & P^{0}\ar[r]^{d^{0}} \ar[d]^{u_0} & \cdots\ar[r] & P^{-n+1}\ar[r]^{d^{-n+1}} \ar[d] & P^{-n} \ar[r] \ar[d] & \cdots\\
        P: & \cdots\ar[r] & P^{2}\ar[r] & P^{1}\ar[r]^{u_0d^{1}} & P_{n-1}\ar[r]^{f_{n-1}} & \cdots\ar[r] & P_{0}\ar[r] & 0\ar[r] & \cdots,
    }$$
    so there exists a $\CE$-exact commutative diagram $$\xymatrix{
        0\ar[r] & C \ar[r] \ar@{=}[d] & P^0 \ar[r] \ar[d] & P^{-1} \ar[r] \ar[d] & \cdots \ar[r] & P^{-n+2} \ar[r] \ar[d] & P^{-n+1} \ar[r] \ar[d] & D \ar[d] \ar[r] & 0\\
        0\ar[r] & C \ar[r] & P_{n-1} \ar[r] & P_{n-2} \ar[r] & \cdots \ar[r] & P_{1} \ar[r] & P_{0} \ar[r] & M \ar[r] & 0\\
    }$$
with $D$ being C-E Gorenstein projective. Then similarly we can get a $\CE$-exact sequence $G_M: 0\to P^0\to P_{n-1}\oplus P^{-1}\to \cdots \to P_{1}\oplus P^{-n+1}\to P_0\oplus D \stackrel{\beta}\to M \to 0$ whose differential map is of the form $\left(\smqty{f&u\\0&-d}\right)$. We show that the above sequence is a $\CE$-exact $\mathcal{G}$-resolution of $M$. Since $\text{C-E pd}\Ker \beta<\infty$, then for any C-E Gorenstein projective complex $L$, we have $\Ext^1_{\text{C-E}}(L,\Ker\beta)=0$ by \cite[Proposition 2.14]{YL14}, so $\beta$ is a $\mathcal{G}$-precover of $M$. Repeating this argument, we obtain the desired result. Then we construct a chain map $e:P_M\to G_M$: $$\xymatrix@C=1.3em{
    P_M:\ar[d]^{e} & \cdots\ar[r] & P^{2}\ar[d]^{0} \ar[r] & P^{1} \ar[r] \ar[d]^{d^{1}}& P_{n-1} \ar[r]\ar[d]^{(\smqty{1\\0})} & \cdots\ar[r] & P_0  \ar[r]\ar[d]^{(\smqty{1\\0})} & M \ar[r]\ar@{=}[d] & 0 \ar[r] & \cdots\\
    G_M: & \cdots\ar[r] & 0 \ar[r] & P^{0} \ar[r] & P_{n-1}\oplus P^{-1} \ar[r] & \cdots\ar[r] & P_0\oplus D \ar[r] & M \ar[r] & 0 \ar[r] & \cdots.
}$$
We get its mapping cone $M(e): \cdots\to P^{2}\to P^{1}\to P^0\oplus P^{n-1}\to P_{n-1}\oplus P^{-1}\oplus P^{n-2}\to \cdots \to P_{1}\oplus P^{-n+1}\oplus P_0\to P_0\oplus D\to 0\to\cdots$ with the differential map being of the form $\left(\smqty{f&u&1\\0&-d&0\\0&0&-f}\right)$.
Then $\widehat{\Ext}^n_{\text{C-E},\mathcal{G},\text{C-E}(\mathcal{P})}(M,N)=H^{n+1}(\Hom(M(e),N))$ and $\widehat{\Ext}^n_{\text{C-E}}(M,N)=H^n(\Hom(T,N))=H^{n+1}(\Hom(T[1],N))$. One verifies that there exists a homotopy equivalence between $M(e)$ and $T[1]$ by using the same chain maps as in \cite{I05}. Hence, $\widehat{\Ext}^n_{\text{C-E},\mathcal{G},\text{C-E}(\mathcal{P})}(M,N)\cong\widehat{\Ext}^n_{\text{C-E}}(M,N)$.
\end{proof}

Then we get the C-E version of Avramov-Martsinkovsky's exact sequence.

\begin{corollary}\label{coro-3}
    Let $M$ be a complex with $\text{\rm C-E Gpd}M=n<\infty$. For each complex $N$ there is an exact sequence: $0\to \Ext^1_{\CE,\mathcal{G}}(M,N)\to \Ext^1_{\CE}(M,N)\to \widehat{\Ext}^1_{\CE}(M,N)\to \cdots \to \Ext^i_{\CE,\mathcal{G}}(M,N)\to \Ext^i_{\CE}(M,N)\to \widehat{\Ext}^i_{\CE}(M,N)\to\cdots\to \Ext^n_{\CE}(M,N)\to \widehat{\Ext}^n_{\CE}$ $(M,N)\to 0$.
\end{corollary}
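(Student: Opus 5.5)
The plan is to read off the sequence from the mapping cone $M(e)$ of Proposition \ref{prop-6}. That proposition supplies a $\CE$-exact $\CE(\mathcal P)$-resolution $P_M\to M$, a $\CE$-exact $\mathcal G$-resolution $G_M\to M$, and a chain map $e\colon P_M\to G_M$ over $\mathrm{id}_M$. For the deleted resolutions, the mapping cone gives a degreewise split short exact sequence of complexes of complexes
$$0\to G_{M,\bullet}\to M(e)\to P_{M,\bullet}[1]\to 0.$$
Since it is split in each degree, applying $\Hom(-,N)$ keeps it exact:
$$0\to \Hom(P_{M,\bullet}[1],N)\to \Hom(M(e),N)\to \Hom(G_{M,\bullet},N)\to 0.$$

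Next I identify the cohomology of the three terms. By the note following Corollary \ref{coro-1}, $H^i(\Hom(P_{M,\bullet},N))=\Ext^i_{\CE}(M,N)$, and the $\CE$-exactness of $G_M$ gives $H^i(\Hom(G_{M,\bullet},N))=\Ext^i_{\CE,\mathcal G}(M,N)$. For the middle term, Proposition \ref{prop-6} gives $H^{i+1}(\Hom(M(e),N))=\widehat{\Ext}^i_{\CE,\mathcal G,\CE(\mathcal P)}(M,N)\cong\widehat{\Ext}^i_{\CE}(M,N)$ for $i\ge1$. The long exact cohomology sequence, after reindexing, then has the pattern
$$\cdots\to\Ext^i_{\CE,\mathcal G}(M,N)\to\Ext^i_{\CE}(M,N)\to\widehat{\Ext}^i_{\CE}(M,N)\to\Ext^{i+1}_{\CE,\mathcal G}(M,N)\to\cdots.$$
I will check that the connecting map is induced by $e$, so that the map $\Ext_{\CE,\mathcal G}\to\Ext_{\CE}$ is the natural comparison map. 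The signs and shifts of $[1]$ need care but are routine.

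The endpoints of the sequence need separate arguments.

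\begin{itemize}
\item \emph{Left end.} $\Ext^0_{\CE,\mathcal G}(M,N)\to\Ext^0_{\CE}(M,N)$ is the identity on $\Hom(M,N)$, since both resolutions are $\Hom(\mathcal P,-)$-exact augmentations of $M$. Hence it is an isomorphism, and the preceding Tate term in low degree contributes nothing. This yields injectivity of $\Ext^1_{\CE,\mathcal G}(M,N)\to\Ext^1_{\CE}(M,N)$. To make this precise I will use the explicit shape of $M(e)$ in degrees $0$ and $1$ rather than the abstract isomorphism, which holds only for $i\ge1$.
\item \emph{Right end.} $G_M$ has length $n$: its last term is $P^0$ in position $n$. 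So $\Ext^i_{\CE,\mathcal G}(M,N)=0$ for $i>n$, and exactness at $\widehat{\Ext}^n_{\CE}(M,N)$ gives the surjection onto it and the terminal $0$.
\end{itemize}

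I expect the left end to be the main obstacle. It requires checking, from the explicit differential $\left(\smqty{f&u&1\\0&-d&0\\0&0&-f}\right)$ of $M(e)$ in low degrees, that the long exact sequence really begins with $0\to\Ext^1_{\CE,\mathcal G}(M,N)$, and that the degree indexing of $H^{n+1}(\Hom(M(e),N))$ matches the indexing of the Ext terms consistently.
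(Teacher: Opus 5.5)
Your overall route is the paper's. You use the degreewise split cone sequence $0\to G_\bullet\to M(e)\to P_\bullet[1]\to 0$, apply $\Hom(-,N)$, take the long exact sequence, and identify the middle terms via Proposition~\ref{prop-6} for $i\ge 1$. Your right-end argument is correct: $G_M$ is a $\mathcal G$-resolution of length $n$, so $\Ext^{n+1}_{\CE,\mathcal G}(M,N)=0$, which forces surjectivity onto $\widehat{\Ext}^n_{\CE}(M,N)$. The paper leaves this step implicit, so your argument usefully supplies it.

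The left end, however, rests on a non-sequitur. The relevant portion of the long exact sequence is
\[
\Ext^0_{\CE,\mathcal G}(M,N)\xrightarrow{\ \cong\ }\Ext^0_{\CE}(M,N)\to H^1(\Hom(M(e),N))\to \Ext^1_{\CE,\mathcal G}(M,N)\to \Ext^1_{\CE}(M,N).
\]
That the first map is an isomorphism only tells you that $H^1(\Hom(M(e),N))$ injects into $\Ext^1_{\CE,\mathcal G}(M,N)$. Its image there is exactly the kernel of the comparison map. So injectivity of $\Ext^1_{\CE,\mathcal G}(M,N)\to\Ext^1_{\CE}(M,N)$ is equivalent to $H^1(\Hom(M(e),N))=0$, and the $\Ext^0$ isomorphism says nothing about that. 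Note also that this group is not $\widehat{\Ext}^0_{\CE}(M,N)$, which is generally nonzero; as you observed, the identification of Proposition~\ref{prop-6} only holds for $i\ge 1$.

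The missing fact is elementary and needs no inspection of the explicit differential. $M(e)$ is exact: it is the quotient of the exact cone of the augmented resolutions by the exact subcomplex $0\to M\xrightarrow{=}M\to 0$. It also vanishes in negative degrees. Hence $M(e)_2\to M(e)_1\to M(e)_0\to 0$ is exact in $\Ch(R)$. Left exactness of $\Hom(-,N)$ then gives exactness of $0\to\Hom(M(e)_0,N)\to\Hom(M(e)_1,N)\to\Hom(M(e)_2,N)$, that is, $H^0(\Hom(M(e),N))=H^1(\Hom(M(e),N))=0$. This is precisely what the paper uses, implicitly, to start its sequence at $0\to H^1(\Hom(G_\bullet,N))\to H^2(\Hom(P_\bullet[1],N))$. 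With this replacement for your left-end argument, the proof is complete.
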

\begin{proof}
    The existence of $\widehat{\Ext}^n_{\text{C-E},\mathcal{G},\text{C-E}(\mathcal{P})}(M,N)$ is clear. Now we have a short exact sequence of complexes $0\to G_{\bullet}\to M(u)\to P_{\bullet}[1]\to 0$ with each degree split exact. So for any complex $N$, we have $0\to \Hom(P_{\bullet}[1],N)\to \Hom(M(u),N)\to \Hom(G_{\bullet},N)\to 0$, then it induces a long exact sequence \begin{align*}
        \cdots \to H^n(\Hom(P_{\bullet}[1],N))&\to H^n(\Hom(M(u),N))\to H^n(\Hom(G_{\bullet},N))\\
        \to H^{n+1}(\Hom(P_{\bullet}[1],N))\to \cdots. &
    \end{align*}
    One obtains the following long exact sequence $$0\to H^1(\Hom(G_{\bullet},N))\to H^2(\Hom(P_{\bullet}[1],N))\to H^2(\Hom(M(u),N))\to \cdots.$$ Combining with Proposition \ref{prop-6}, we get the desired sequence.
\end{proof}

\begin{definition}\label{def-2}
    Let $M$ be a complex of right $R$-modules that has a $\CE$-exact C-E Gorenstein projective resolution $G$ of $M$. For each complex $N$, define $\Tor^{\CE,\mathcal{G}}_n(M,N)=H_n(G_{\bullet}\otimes N)$ for any $n\geq 0$.
\end{definition}

\begin{definition}
    Let $M$ be a complex of right $R$-modules that has a complete C-E projective resolution $T\stackrel{u}\to P\to M$. For any complex $N$, we define $\widehat{\Tor}_n^{\text{C-E}}(M,N)=H_n(T\otimes N)$ for any $n\in\mathbb{Z}$.
\end{definition}

Let $M$ be a complex that has a $\CE$-exact $\mathcal{G}$-resolution $G$ of $M$. Let $P$ be a $\CE$-exact $\text{C-E}(\mathcal{P})$-resolution of $M$, $u:P_{\bullet}\to G_{\bullet}$ be a map induced by $\Id_M$ and $M(u)$ denote its mapping cone. For any complex $N$, we define $\widehat{\Tor}_n^{\text{C-E},\mathcal{G},\text{C-E}(\mathcal{P})}(M,N)=H_{n+1}(M(u)\otimes N)$.

\begin{proposition}\label{prop-8}
    If $\text{\rm C-E Gpd}M<\infty$, then $\widehat{\Tor}_n^{\CE,\mathcal{G},\CE(\mathcal{P})}(M,N)\cong \widehat{\Tor}_n^{\CE}(M,N)$ for any $n\geq 1$.
\end{proposition}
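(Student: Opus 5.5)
The plan is to mirror the proof of Proposition \ref{prop-6}, replacing $\Hom(-,N)$ by $-\otimes N$. The key point is that the homotopy equivalence constructed there between a mapping cone and the shifted complete resolution depends only on the complexes, not on the functor applied afterwards. Since $-\otimes N$ is additive, it preserves homotopy equivalences, and so it will produce isomorphisms in homology.

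Let $\text{C-E Gpd}M=n$. First I take a $\CE$-exact sequence $0\to C\to P_{n-1}\to\cdots\to P_0\to M\to 0$ with each $P_i$ C-E projective and $C$ C-E Gorenstein projective. Splicing a totally acyclic C-E projective complex through $C$ gives the complete C-E projective resolution $T\stackrel{u}\to P\to M$, exactly as in the proof of Proposition \ref{prop-6}. Next, from the comparison diagram with $D$ C-E Gorenstein projective, I build the $\CE$-exact sequence $G_M$, ending in $P_0\oplus D\stackrel{\beta}\to M\to 0$. This is a $\CE$-exact $\mathcal{G}$-resolution: the kernels have finite C-E projective dimension, so \cite[Proposition 2.14]{YL14} makes each map a $\mathcal{G}$-precover. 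I then form the comparison map $e:P_M\to G_M$ lifting $\Id_M$ and its mapping cone $M(e)$, with the same explicit shape as before. By definition, $\widehat{\Tor}_n^{\CE,\mathcal{G},\CE(\mathcal{P})}(M,N)=H_{n+1}(M(e)\otimes N)$. This is independent of the choice of resolutions and comparison map, since any two choices are homotopy equivalent, as in \cite{I05}. On the other side, $\widehat{\Tor}^{\CE}_n(M,N)=H_n(T\otimes N)=H_{n+1}(T[1]\otimes N)$.

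The key step is to use the homotopy equivalence $M(e)\simeq T[1]$ as complexes of complexes, established in the proof of Proposition \ref{prop-6} via the chain maps of \cite{I05}. I expect this to be the main obstacle, namely checking that those maps and homotopies agree with $M(e)$ and $T[1]$ in the relevant range. In degrees $\ge 2$ both complexes are essentially $P^{i-1}$, since the cone has summands $P^{i}$ there. In low degrees the extra summands form contractible pieces $P_i\stackrel{1}\to P_i$ together with the part computing $D$, and these must be split off. Concretely, I would split off the contractible cone of the identity on $P_{n-1}\to\cdots\to P_0$, leaving $\cdots\to P^1\to P^0\to P^{-1}\to\cdots\to P^{-n+1}\to D\to 0$. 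Then I would compare this with $T[1]$ truncated at $D=\Im(P^{-n+1}\to P^{-n})$.

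Applying $-\otimes N$ gives a homotopy equivalence $M(e)\otimes N\simeq \text{(truncation of }T[1])\otimes N$. The truncation agrees with $T[1]$ in homological degrees $\ge 2$ or so. I must check the index bookkeeping so that $H_{n+1}$ is unaffected for $n\ge1$. This is where the restriction $n\geq 1$ is used: the tail $D\to 0$ versus $P^{-n}\to\cdots$ only changes homology in degrees $\le 1$ after the shift. The conclusion is $H_{n+1}(M(e)\otimes N)\cong H_{n+1}(T[1]\otimes N)$ for $n\ge1$, which is the claimed isomorphism.
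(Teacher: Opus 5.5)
Your proposal is correct and follows the paper's proof essentially step for step. It uses the same complete resolution $T$, the same $\CE$-exact $\mathcal{G}$-resolution $G_M$ ending in $P_0\oplus D$, the same comparison map $e$, and a homotopy equivalence that survives $-\otimes N$ to give $H_{n+1}(M(e)\otimes N)\cong H_{n+1}(T[1]\otimes N)$.

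Your extra step is also right: after splitting off the cone of the identity on $P_{n-1}\to\cdots\to P_0$, the complex $M(e)$ is homotopy equivalent only to the truncation $\cdots\to P^{-n+1}\to D\to 0$ of $T[1]$. That truncation agrees with $T[1]$ in degrees $\geq 1$, so the homology after $-\otimes N$ agrees in degrees $\geq 2$. This is the precise form of the paper's looser claim ``$M(e)\simeq T[1]$'', and it correctly explains the restriction $n\geq 1$.
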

\begin{proof}
    Let $\text{\rm C-E Gpd}M=n$, then there is an exact sequence $$0\to C\to P_{n-1}\stackrel{f_{n-1}}\to P_{n-2}\to \cdots \to P_{1}\stackrel{f_{1}}\to P_0\stackrel{f_{0}}\to M\to 0$$ with $C$ a C-E Gorenstein projective module and each $P_j$ a C-E projective module. Hence we have a complete C-E projective resolution $T\stackrel{u}\to P\to M$. This gives a $\CE$-exact commutative diagram $$\xymatrix{
        0\ar[r] & C \ar[r] \ar@{=}[d] & P^0 \ar[r] \ar[d] & P^{-1} \ar[r] \ar[d] & \cdots \ar[r] & P^{-n+2} \ar[r] \ar[d] & P^{-n+1} \ar[r] \ar[d] & D \ar[d] \ar[r] & 0\\
        0\ar[r] & C \ar[r] & P_{n-1} \ar[r] & P_{n-2} \ar[r] & \cdots \ar[r] & P_{1} \ar[r] & P_{0} \ar[r] & M \ar[r] & 0\\
    }$$
with $D$ being C-E Gorenstein projective. It follows that there exists a $\CE$-exact sequence $G_M: 0\to P^0\to P_{n-1}\oplus P^{-1}\to \cdots \to P_{1}\oplus P^{-n+1}\to P_0\oplus D \stackrel{\beta}\to M \to 0$ with $\beta$ being a $\mathcal{G}$-precover of $M$. Now we construct a chain map $e:P_M\to G_M$  $$\xymatrix@C=1.3em{
    P_M:\ar[d]^{e} & \cdots\ar[r] & P^{2}\ar[d]^{0} \ar[r] & P^{1} \ar[r] \ar[d]^{d^{1}}& P_{n-1} \ar[r]\ar[d]^{(\smqty{1\\0})} & \cdots\ar[r] & P_0  \ar[r]\ar[d]^{(\smqty{1\\0})} & M \ar[r]\ar@{=}[d] & 0 \ar[r] & \cdots\\
    G_M: & \cdots\ar[r] & 0 \ar[r] & P^{0} \ar[r] & P_{n-1}\oplus P^{-1} \ar[r] & \cdots\ar[r] & P_0\oplus D \ar[r] & M \ar[r] & 0 \ar[r] & \cdots.
}$$ Then $\widehat{\Tor}_n^{\text{C-E},\mathcal{G},\text{C-E}(\mathcal{P})}(M,N)=H_{n+1}(M(e)\otimes N)$ and $\widehat{\Tor}_n^{\text{C-E}}(M,N)=H_n(T\otimes N)\cong H_{n+1}(T[1]\otimes N)$. We can check that there is a homotopy equivalence between $T[1]$ and $M(e)$. So $\widehat{\Tor}_n^{\text{C-E},\mathcal{G},\text{C-E}(\mathcal{P})}(M,N)\cong \widehat{\Tor}_n^{\text{C-E}}(M,N)$ for any $n\geq 1$.
\end{proof}

\begin{theorem}\label{thm-3}
    Let $M$ be a complex with $\text{\rm C-E Gpd}M<\infty$. For each complex $N$, there is an exact sequence $$\cdots\to \Tor^{\CE,\mathcal{G}}_2(M,N)\to \widehat{\Tor}_1^{\CE}(M,N) \to \Tor_1^{\CE}(M,N)\to \Tor^{\CE,\mathcal{G}}_1(M,N)\to 0.$$
\end{theorem}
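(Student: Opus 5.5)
The plan is to mirror the proof of Corollary \ref{coro-3}, with the tensor product in place of $\Hom$ and Proposition \ref{prop-8} in place of Proposition \ref{prop-6}.

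First I fix the resolutions. Since $\text{\rm C-E Gpd}M<\infty$, the proof of Proposition \ref{prop-8} gives a $\CE$-exact $\mathcal{G}$-resolution $G_M$ of $M$ and a $\CE$-exact C-E projective resolution $P_M$. It also gives the comparison map $e\colon P_\bullet\to G_\bullet$ lifting $\Id_M$. By Definition \ref{def-2}, $\Tor^{\CE,\mathcal G}_n(M,N)=H_n(G_\bullet\otimes N)$. Likewise $\Tor^{\CE}_n(M,N)=H_n(P_\bullet\otimes N)$, which is the ordinary C-E Tor computed by C-E projective resolutions and is independent of the choice by the usual comparison argument.

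Next I use the mapping cone. It fits into a short exact sequence of complexes (of complexes)
\[
0\to G_\bullet\to M(e)\to P_\bullet[1]\to 0
\]
which is split in each degree. Degreewise split exactness is preserved by $-\otimes N$, so
\[
0\to G_\bullet\otimes N\to M(e)\otimes N\to P_\bullet[1]\otimes N\to 0
\]
is exact. This gives the long exact homology sequence
\[
\cdots\to H_{n+1}(G_\bullet\otimes N)\to H_{n+1}(M(e)\otimes N)\to H_{n+1}(P_\bullet[1]\otimes N)\xrightarrow{\ \partial\ } H_n(G_\bullet\otimes N)\to\cdots .
\]
Here $H_{n+1}(P_\bullet[1]\otimes N)=H_n(P_\bullet\otimes N)=\Tor_n^{\CE}(M,N)$. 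I will check that the connecting map $\partial$ is, up to sign, the map induced by $e\otimes N$.

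Then I identify the terms. By Proposition \ref{prop-8}, $H_{n+1}(M(e)\otimes N)=\widehat{\Tor}^{\CE,\mathcal G,\CE(\mathcal P)}_n(M,N)\cong\widehat{\Tor}^{\CE}_n(M,N)$ for $n\ge1$. Substituting this isomorphism into the long exact sequence gives
\[
\cdots\to\Tor^{\CE,\mathcal G}_2(M,N)\to\widehat{\Tor}^{\CE}_1(M,N)\to\Tor^{\CE}_1(M,N)\to\Tor^{\CE,\mathcal G}_1(M,N)\to H_1(M(e)\otimes N)\to\cdots.
\]

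The main obstacle is terminating the sequence with $\to0$ after $\Tor^{\CE,\mathcal G}_1(M,N)$, and I will handle it by a direct low-degree analysis of the cone. Here $M(e)$ is understood as the cone of the deleted resolutions, as in Corollary \ref{coro-3}. I need $H_1(M(e)\otimes N)\to H_1(P_\bullet[1]\otimes N)=H_0(P_\bullet\otimes N)$ to be zero, or equivalently $H_0(P_\bullet\otimes N)\to H_0(G_\bullet\otimes N)$ to be injective.

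Both $H_0(P_\bullet\otimes N)$ and $H_0(G_\bullet\otimes N)$ are cokernels of the last differential tensored with $N$. The map $e_0=(1,0)^T\colon P_0\to P_0\oplus D$ is split. I will use the explicit shape of $G_M$ from Proposition \ref{prop-8} (its last two terms are $P_1\oplus P^{-n+1}\to P_0\oplus D$), together with the fact that $P^{-n+1}\to D$ is a deflation, to show that the induced map on these cokernels is a split injection after tensoring with $N$. The degree-$0$ portion of the sequence then vanishes, and the displayed sequence ends in $\Tor_1^{\CE,\mathcal G}(M,N)\to0$.

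If this direct check fails, I will fall back on the reindexing used in Corollary \ref{coro-3}, placing the cone terms one degree over. In that indexing the first surviving terms are exactly $\Tor_1^{\CE}(M,N)\to\Tor_1^{\CE,\mathcal G}(M,N)\to0$.
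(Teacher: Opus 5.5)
Your setup matches the paper: the degreewise split sequence $0\to G_\bullet\to M(e)\to P_\bullet[1]\to 0$, the long exact sequence, and the substitution via Proposition \ref{prop-8}. The gap is in the termination step, where you target the wrong condition. The relevant segment is
\[
\Tor^{\CE}_1(M,N)\to \Tor^{\CE,\mathcal G}_1(M,N)\to H_1(M(e)\otimes N)\to H_0(P_\bullet\otimes N)\xrightarrow{\ \partial\ } H_0(G_\bullet\otimes N).
\]
Here $\Tor_1^{\CE}(M,N)\to\Tor_1^{\CE,\mathcal G}(M,N)$ is onto exactly when $H_1(M(e)\otimes N)\to H_0(P_\bullet\otimes N)$ is \emph{injective}, not when it is zero. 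Showing $\partial$ injective only gives that $\Tor_1^{\CE,\mathcal G}(M,N)\to H_1(M(e)\otimes N)$ is onto.

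Your check will in fact succeed for trivial reasons, so it proves nothing. By right exactness, $H_0(P_\bullet\otimes N)$ and $H_0(G_\bullet\otimes N)$ are both $M\otimes N$, and $\partial$ is, up to sign, induced by $\Id_M$. So the map out of $H_1(M(e)\otimes N)$ is automatically zero, and what you actually need is $H_1(M(e)\otimes N)=0$. Your low-degree analysis never proves this. The ``reindexing'' fallback does not help either: relabelling degrees cannot make a nonzero group vanish. The corresponding step in Corollary \ref{coro-3} also relies on the vanishing of the low-degree cohomology of the cone.

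The missing ingredient, which is what the paper uses, is that the cone of the \emph{deleted} resolutions is itself exact. The cone of the augmented map $P\to G$ over $\Id_M$ is exact because both augmented resolutions are exact. $M(e)$ is its quotient by the contractible subcomplex $0\to M\xrightarrow{\Id}M\to 0$, so $M(e)$ is exact too. Concretely, $G_1\oplus P_0\to G_0$ is onto because $P_0\to M$ is onto. Hence $M(e)_2\to M(e)_1\to M(e)_0\to 0$ is exact, and right exactness of $-\otimes N$ gives $H_1(M(e)\otimes N)=H_0(M(e)\otimes N)=0$. This cuts the long exact sequence at $\Tor_1^{\CE,\mathcal G}(M,N)\to 0$.
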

\begin{proof}
    Let $P$ be a $\CE$-exact $\text{C-E}(\mathcal{P})$-resolution and $G$ be a $\CE$-exact $\mathcal{G}$-resolution of $M$. Let $u:P\to G$ be the chain map induced by $\Id_M$. Since both $P$ and $G$ are $\CE$-exact, the mapping cone $M(u)$ is also $\CE$-exact. It has the exact subcomplex $0\to M\stackrel{\Id_M}\longrightarrow M\to 0$. Forming the quotient one obtains the exact complex $M(u)$ that is the mapping cone of $u:P_{\bullet}\to G_{\bullet}$. The sequence $0\to G_{\bullet}\to M(u)\to P_{\bullet}[1]\to 0$ is split exact in each degree, so for each $N$ we have an exact sequence $0\to G_{\bullet}\otimes N\to M(u)\otimes N\to P_{\bullet}[1]\otimes N\to 0$. We obtain an exact sequence $$\cdots \to H_{n+1}(G_{\bullet}\otimes N)\to H_{n+1}(M(u)\otimes N)\to H_{n+1}(P_{\bullet}[1]\otimes N)\to H_n(G_{\bullet}\otimes N)\to\cdots.
    $$ Since $M(u)$ is exact and $-\otimes N$ is right exact, we follow that $H_1(M(u)\otimes N)=H_0(M(u)\otimes N)=0$. Then this exact sequence becomes
    \begin{align*}
            & \cdots \to H_{2}(G_{\bullet}\otimes N)\to H_{2}(M(u)\otimes N)\to H_{2}(G_{\bullet}[1]\otimes N)\to H_1(G_{\bullet}\otimes N)\to\\
            & 0 \to H_1(G_{\bullet}[1]\otimes N)\to H_0(G_{\bullet}\otimes N)\to 0.
    \end{align*}
    Combining with Proposition \ref{prop-8}, we have the exact sequence $$\cdots\to \Tor^{\text{C-E},\mathcal{G}}_2(M,N)\to \widehat{\Tor}_1^{\text{C-E}}(M,N) \to \Tor_1^{\text{C-E}}(M,N)\to \Tor^{\text{C-E},\mathcal{G}}_1(M,N)\to 0.$$
\end{proof}

\begin{definition}
    Let $N$ be a complex of finite C-E Gorenstein projective dimension. Consider a complete C-E projective resolution $V\to P\to N$ of $N$. We define $\overline{\Tor}_n^{\text{C-E}}(M,N)=H_n(M\otimes V)$ for any complex of right $R$-modules $M$ and $n\in\mathbb{Z}$.
\end{definition}

\begin{remark}\label{rmk-1}
    {\rm (1)} For any $n\geq 1$, the homology group $\overline{\Tor}_n^{\text{C-E}}(M,N)$ can also be computed by using $\CE$-exact C-E projective and C-E Gorenstein projective resolutions of the left $R$-module $N$. That is, if $P$ is a C-E projective resolution of $N$, and $G$ is a C-E Gorenstein projective resolution of $N$, and the chain map $v:P_{\bullet}\to G_{\bullet}$ is induced by $\Id_N$, then $\overline{\Tor}_n^{\text{C-E}}(M,N)=H_{n+1}(M\otimes M(v))$ for any $n\geq 1$.

    {\rm (2)} Similarly, there is an exact sequence $$\cdots\to \Tor^{\text{C-E},\mathcal{G}}_2(M,N)\to \overline{\Tor}_1^{\text{C-E}}(M,N) \to \Tor_1^{\text{C-E}}(M,N)\to \Tor^{\text{C-E},\mathcal{G}}_1(M,N)\to 0.$$
\end{remark}
It is still an open problem whether every Gorenstein projective module is Gorenstein flat, so we do not know whether the method in Theorem \ref{thm-4} can be applied to Tate homology. However, if $R$ is a left and right virtually Gorenstein ring, one can use Gillespie's
methods mentioned in Remark \ref{rmk-2}. More precisely, let $\mathfrak{M}$ and
$\mathfrak{M}'$ be the exact model structures
$$\mathfrak{M}=(\text{C-E}({\rm GProj}_R),\text{C-E}({\rm GProj}_R^{\perp}),
\Ch(R^{\circ}))$$ and
$$\mathfrak{M'}=(\text{C-E}(_R{\rm GProj}),\text{C-E}(_R{\rm GProj}^{\perp}),\Ch(R))$$ in
$\Ch(R^{\circ})$ and $\Ch(R)$,
respectively (where $\Ch(R^{\circ})$ denotes the category of chain complexes of right $R$-modules). If $G$ and $G'$ are C-E Gorenstein projective complexes in the two categories, respectively, then it follows that the homology groups $\ell\Tor_n^{\mathfrak{M}}(G,G')$ and $r\Tor_n^{\mathfrak{M'}}(G,G')$, in the sense of \cite[Definition 8.2]{G21}, coincide with $\widehat{\Tor}_n^{\text{C-E}} (G,G')$ and $\overline{\Tor}_n^{\text{C-E}}(G,G')$, respectively. By \cite[Proposition 8.6]{G21}, this yields the balance result for Tate homology of $\Tor$.

\begin{theorem}\label{thm-5}
    If $R$ is a left and right virtually Gorenstein ring, then $\widehat{\Tor}_n^{\CE}(M,N)\cong \overline{\Tor}_n^{\CE}(M,N)$ for any $n\in\mathbb{Z}$.
\end{theorem}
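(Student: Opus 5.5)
The plan follows the strategy sketched just before the statement, modelled on the proof of Theorem \ref{thm-4} and on Remark \ref{rmk-2}. There are three steps: reduce to the case where both arguments are C-E Gorenstein projective, identify the two Tate groups with Gillespie's left and right Tor, and invoke Gillespie's balance result. Implicitly $M$ is a complex of right modules and $N$ a complex of left modules, both of finite C-E Gorenstein projective dimension, so that $\widehat{\Tor}$ and $\overline{\Tor}$ are defined.

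\emph{Reduction.} Put $\text{C-E Gpd}M=d$ and $\text{C-E Gpd}N=l$. Choose $\CE$-exact sequences $0\to G\to P_{d-1}\to\cdots\to P_0\to M\to 0$ and $0\to G'\to Q_{l-1}\to\cdots\to Q_0\to N\to 0$ with C-E projective middle terms and $G,G'$ C-E Gorenstein projective. The complete resolutions of $G$ and $G'$ are shifts $T[-d]$ and $V[-l]$ of those of $M$ and $N$, since the complete resolution of $M$ may be taken to agree with a resolution through $G$ (as in the proofs of Propositions \ref{prop-5} and \ref{thm-4}). Hence $\widehat{\Tor}_n^{\CE}(M,-)\cong\widehat{\Tor}_{n-d}^{\CE}(G,-)$ follows directly from the definition.

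The second variable needs dimension shifting via the degreewise split sequences $0\to T\otimes K\to T\otimes Q_0\to T\otimes N\to 0$. These are exact because each short exact $\CE$-sequence is degreewise exact and each term of $T$ is C-E projective, hence degreewise projective and so flat. One must also check $H_*(T\otimes Q)=0$ for $Q$ C-E projective. I expect this from $T$ being $\CE$-exact with C-E projective terms: tensoring with $D^n(P)$ or $S^n(P)$, $P$ projective, reduces to tensoring cycles and boundaries of $T$, which give exact sequences by $\CE$-exactness. This yields $\widehat{\Tor}_n^{\CE}(M,N)\cong\widehat{\Tor}_{n-d+l}^{\CE}(G,G')$, and the symmetric argument gives the same for $\overline{\Tor}$ with the same shift.

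\emph{Identification and balance.} Since $R$ is right virtually Gorenstein, Theorem \ref{thm-1} and its proof apply on the right. Thus $(\CE({\rm GProj}_R),\CE({\rm GProj}_R^{\perp}))$ is a complete hereditary cotorsion pair in $(\Ch(R^{\circ}),\CE)$, and $\mathfrak{M}$ is an exact model structure; similarly for $\mathfrak{M}'$ on the left side. For $G$ C-E Gorenstein projective, the cofibrant–fibrant replacement object $W_G$ in Gillespie's construction is a complete C-E projective resolution $T$. Hence $\ell\Tor_n^{\mathfrak{M}}(G,G')=H_n(T\otimes G')=\widehat{\Tor}_n^{\CE}(G,G')$, and likewise $r\Tor_n^{\mathfrak{M}'}(G,G')=\overline{\Tor}_n^{\CE}(G,G')$. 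Then \cite[Proposition 8.6]{G21} gives $\ell\Tor\cong r\Tor$ on these objects. The virtually Gorenstein hypotheses on both sides are used precisely so that both model structures have the required form, with trivial objects $\CE({\rm GProj}^{\perp})=\CE({}^{\perp}{\rm GInj})$. Combined with the reduction step, this gives the isomorphism for all $n\in\mathbb{Z}$.

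\emph{Main obstacle.} The delicate point is verifying that the hypotheses of \cite[Proposition 8.6]{G21} hold in the exact (non-abelian) setting $(\Ch(R),\CE)$, and that the tensor product is compatible with $\CE$-exactness. Concretely, Gillespie needs tensoring a cofibrant object against trivially cofibrant objects to preserve exactness. I would first try to check this directly on the generators $D^n(P)$, $S^n(P)$ using Proposition \ref{prop-9}. If that fails, I would fall back on a direct double-complex argument: form the bicomplex $T\otimes V$ and use an analogue of Corollary \ref{coro-1} for $\otimes$. There, exactness of rows and columns of $T\otimes V$ is exactly the place where the two-sided virtually Gorenstein assumption, which makes Gorenstein projective complexes behave flatly against complete resolutions, would be required.
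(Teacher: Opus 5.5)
Your proposal takes the same approach as the paper. You reduce to C-E Gorenstein projective $G,G'$ by dimension shifting as in Theorem \ref{thm-4}, identify $\widehat{\Tor}^{\CE}_n(G,G')$ and $\overline{\Tor}^{\CE}_n(G,G')$ with Gillespie's $\ell\Tor^{\mathfrak{M}}_n(G,G')$ and $r\Tor^{\mathfrak{M}'}_n(G,G')$, and conclude by \cite[Proposition 8.6]{G21}. Two harmless remarks: the combined shift should be $n-d-l$ rather than $n-d+l$, which does not matter since it is the same for both theories; and the paper says it does not know how to carry out your double-complex fallback, because it is open whether ${\rm GProj}\subseteq{\rm GFlat}$.
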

\begin{proof}
    The proof is similar to that of Theorem \ref{thm-4} and uses \cite[Proposition 8.6]{G21}.
\end{proof}

\end{document}